\newtheorem{lemma}{Lemma}[section]
\newtheorem{definition}[lemma]{Definition}
\newtheorem{remark}[lemma]{Remark}
\newtheorem{corollary}[lemma]{Corollary}
\newtheorem{theorem}[lemma]{Theorem}
\title[Global convergence of Newton's method]{GLOBAL CONVERGENCE OF NEWTON'S METHOD FOR THE REGULARIZED $p$-STOKES EQUATIONS}
\begin{document}
	
	\maketitle
	\begin{center}
	\author{Niko Schmidt\footnote{Kiel University, n\_f\_schmidt@yahoo.de}}
	\end{center}	
	\section*{Abstract}
The motion of glaciers can be simulated with the $p$-Stokes equations. Up to now, Newton's method to solve these equations has been analyzed in finite-dimensional settings only. We analyze the problem in infinite dimensions to gain a new viewpoint.
We do that by proving global convergence of the infinite-dimensional Newton's method with Armijo step sizes to the solution of these equations. 
We only have to add an arbitrarily small diffusion term for this convergence result. We prove that the additional diffusion term only causes minor differences in the solution compared to the original $p$-Stokes equations under the assumption of some regularity. Finally, we test our algorithms on two experiments: A reformulation of the experiment ISMIP-HOM $B$ without sliding and a block with sliding. For the former, the approximation of exact step sizes for the Picard iteration and exact step sizes and Armijo step sizes for Newton's method are superior in the experiment compared to the Picard iteration. For the latter experiment, Newton's method with Armijo step sizes needs many iterations until it converges fast to the solution. Thus, Newton's method with approximately exact step sizes is better than Armijo step sizes in this experiment.
	\subsection*{Keywords}
		$p$-Stokes, Newton's method, global convergence, glaciology, sliding
	\section{Introduction}\label{sec1}
	
	Ice-sheet models spend most of the computation time solving the momentum equations, \cite{Fischler2022}. These equations are nonlinear partial differential equations named $p$-Stokes equations. 
	
	We prove that Newton's method with Armijo step sizes converges to the solution of the $p$-Stokes equations if we add a small diffusion term. 
	We control the step size with a convex functional, which is the anti-derivative of the $p$-Stokes equations. Evaluating this functional has nearly no computational cost. Moreover, the convexity of the functional allows us to approximate the exact step sizes. 
	We slightly modify the numerical experiment ISMIP-HOM $B$ to test Newton's method with Armijo step sizes. Moreover, we test this algorithm with a sliding block. We test both experiments with the Picard iteration and Newton's method with approximations of exact step sizes.
	
	The small diffusion term is necessary to imply G\^{a}teaux differentiability of the $p$-Stokes equations in the variational formulation. 
	Furthermore, the shear-thinning viscosity term has a negative exponent. Thus, we need for differentiability a positive constant in this term. To conclude the theory, we show that the regularized solution converges to the solution of the $p$-Stokes equations under a slight regularity assumption.
	
	Regarding the well-posedness of the equations, there are different types of literature results: nonlinear friction boundary conditions \cite{Chen2013}, an implicitly given viscosity \cite{Jouvet2011} with a differentiable shear-thinning term, or a differentiable shear-thinning term with Dirichlet boundary conditions, \cite{Hirn2013}. One recent publication with more general boundary conditions than we consider in two dimensions uses Newton's method in a finite-dimensional setting, see \cite{Diego2023}. However, we need a combination of a differentiable, explicitly given shear-thinning viscosity, and nonlinear friction boundary conditions. Glaciologists use such a formulation to simulate glaciers, \cite{Fowler2021}. 
	G\^{a}teaux differentiability results are mainly devoted to optimal control and the necessary control-to-state mapping, see for example, \cite{Casas1993} for the $p$-Laplace equations or \cite{Arada2012a} for the Navier-Stokes equations with a nonlinear $p$-Stokes term.
	In \cite{Arada2012a}, the G\^{a}teaux differentiability is shown for $p\geq 2$. The idea of how to prove differentiability for $1<p<2$ is briefly mentioned in \cite[section $6$]{Arada2012}. That paper uses the additional diffusion term for optimal control. It proves convergence in Sobolev spaces for vanishing regularization of the diffusion term. However, that paper has different requirements: It considers optimal control in a slightly different formulation, has Dirichlet boundary conditions, and has a convective term. It has more restrictions on the exponent of the shear-thinning fluids than we have. In \cite{Jouvet2011}, local quadratic convergence of Newton's method is shown for the finite-dimensional case. Glaciologists already consider Newton's method, \cite{Fraters2019}, but we consider a different approach by adding a small diffusive term and using a convex functional for step size control. There is also a multigrid approach with Newton's method and a reformulation of the partial differential equation with first derivatives available, \cite{Allen2017}. 
	
	The paper is structured as follows: In section $\ref{section2}$, we derive the variational formulation, project to divergence-free spaces, introduce a minimization problem that is equivalent to solving the full-Stokes equations, and verify existence and uniqueness of a solution. In section $\ref{DiffG}$, we consider G\^{a}teaux differentiability.
	In section $\ref{NewtonMethod}$, we verify that Newton steps can be calculated. 
	In section \ref{GlobalSuperLinear}, we use the functional to verify global convergence.
	Additionally, we prove that the solution of the regularized $p$-Stokes equations converges to the solution of the $p$-Stokes equations for vanishing regularization under some regularity assumptions. In section $\ref{Experiments}$, we consider one experiment without sliding and one with. We summarize our results in the final section $\ref{Conclusion}$.
	\section{The p-Stokes equations}\label{section2}
	In this section, we formulate the $p$-Stokes equations in both the classical and the variational formulations. The $p$-Stokes equations can be used to, e.g., model the motion of glaciers. Their complexity results from nonlinear viscosity, also described as shear-thinning, \cite{Fowler2021}. Mass conservation and incompressibility lead to a divergence-free velocity $\boldsymbol{v}$. Let $N \in \{2,3\}$, $\Omega\subseteq\mathbb{R}^N$ be a Lipschitz domain. Let $\sigma$ be the stress tensor, $\boldsymbol{v}$ the velocity, $\rho$ the density, and $\boldsymbol{g}$ the gravitational acceleration. The $p$-Stokes equations are:
	\begin{align}
	-\mathrm{div}\sigma &=-\rho \boldsymbol{g}\quad \text{on }\Omega,\label{FirstPart}
	\\
	\mathrm{div}\boldsymbol{v}&=0\quad \quad \text{ on }\Omega
	\end{align}
	To relate the stress tensor $\sigma$, the velocity $\boldsymbol{v}$, and the pressure $\pi$, we introduce the identity matrix $I=(I_{ij})_{ij}$,
	\begin{align*}
	(D\boldsymbol{v})_{ij}&=\frac{1}{2}\bigg(\frac{\partial v_i}{\partial x_j}+\frac{\partial v_j}{\partial x_i}\bigg)\text{, }\quad i,j\in \{1,...,N\}, \\
	B&\in L^{\infty}(\Omega)\text{, }\quad B\geq c_0\in (0,\infty),
	\end{align*}
	and the following definition:
	\begin{definition}
		Let $p\in (1,2)$, $\delta>0$. We define $S^p:\mathbb{R}^{N \times N}\to \mathbb{R}^{N \times N}$,
		\begin{align}
		S^p(P)=\big(|P|^2+\delta^2\big)^{(p-2)/2}P.
		\end{align}
		The norm $|\cdot |$ is the Frobenius norm. For vectors $\boldsymbol{v}\in \mathbb{R}^n$, we interpret $S^p$ as a mapping from $\mathbb{R}^n\to \mathbb{R}^n$ with
		\begin{align*}
		S^p(\boldsymbol{v})=\big(|\boldsymbol{v}|^2+\delta^2\big)^{(p-2)/2}\boldsymbol{v}
		\end{align*}
		with the Euclidean norm $|\cdot |$.
	\end{definition}
	Let $p\in (1,2)$. The stress tensor $\sigma$ is given by
	\begin{align*}
	\sigma=-\pi I+BS^p(D\boldsymbol{v}).
	\end{align*}
	The case $p=2$ reduces to the Stokes problem. In glaciological applications $p=4/3$, see \cite[section 1.4.3]{Fowler2021}, or in more recent approaches $p=5/4$, see \cite[Abstract]{Bons2018}, is used. The following result states information about the integrability of $S^p$:
	\begin{lemma}\label{LqS}
		Let $p\in (1,2)$. For all $P \in L^p(\Omega)^{N \times N}$ follows $S^p(P)\in L^{p'}(\Omega)^{N \times N}$ with the dual exponent $p'$.
	\end{lemma}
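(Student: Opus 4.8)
The plan is to reduce the claim to a pointwise estimate on the integrand and then integrate. Fix $P \in L^p(\Omega)^{N\times N}$. Since $P$ is measurable and $S^p \colon \mathbb{R}^{N\times N}\to\mathbb{R}^{N\times N}$ is continuous, $S^p(P)$ is measurable, so only integrability of $|S^p(P)|^{p'}$ needs to be checked, where $p' = p/(p-1)$ is the dual exponent.

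The key step is the pointwise bound $|S^p(P)| \le |P|^{p-1}$, valid almost everywhere on $\Omega$ (with the convention $0^{p-1}=0$; note $S^p$ vanishes wherever $P$ does). To see this, write $|S^p(P)| = (|P|^2+\delta^2)^{(p-2)/2}\,|P|$ and observe that the exponent $(p-2)/2$ is \emph{negative} because $p\in(1,2)$; hence $|P|^2 + \delta^2 \ge |P|^2$ gives $(|P|^2+\delta^2)^{(p-2)/2} \le (|P|^2)^{(p-2)/2} = |P|^{p-2}$ at every point where $|P|>0$. Multiplying by $|P|$ yields the asserted bound.

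Raising to the power $p'$ and using $(p-1)p' = p$, I get $|S^p(P)|^{p'} \le |P|^{(p-1)p'} = |P|^p$ pointwise a.e., so
\begin{align*}
\int_\Omega |S^p(P)|^{p'}\,\mathrm{d}x \le \int_\Omega |P|^p\,\mathrm{d}x = \|P\|_{L^p(\Omega)^{N\times N}}^p < \infty,
\end{align*}
which shows $S^p(P)\in L^{p'}(\Omega)^{N\times N}$ and in fact $\|S^p(P)\|_{L^{p'}} \le \|P\|_{L^p}^{p-1}$.

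There is no real obstacle here: the only subtlety is recognizing that the negative sign of $(p-2)/2$ is what makes the regularizing term $\delta^2$ droppable in the upper estimate, and handling the null set $\{|P|=0\}$, where the inequality holds trivially since both sides are zero. The vector-valued statement of the lemma follows by the identical computation with the Euclidean norm in place of the Frobenius norm.
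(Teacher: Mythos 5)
Your proof is correct and follows essentially the same route as the paper: the pointwise estimate $|S^p(P)|\leq |P|^{p-1}$ (using that $(p-2)/2<0$ allows dropping $\delta^2$), followed by raising to the power $p'$ and integrating. You merely spell out the measurability and the set $\{|P|=0\}$ in more detail than the paper does.
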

	\begin{proof}
		With the dual exponent $p'=p/(p-1)$ and $p\in (1,2)$ follows
		\begin{align*}
		|S^p(P)|\leq |P|^{p-1}\Rightarrow |S^p(P)|^{p'} \leq |P|^p\in L^1(\Omega).
		\end{align*}
	\end{proof}
	Let $\partial \Omega = \Gamma_d \cup \Gamma_a \cup \Gamma_b$. The Dirichlet boundary condition $\boldsymbol{v}=0$ is satisfied for those parts of the glacier frozen to the ground $\Gamma_d$. The interaction of the glacier with the air is given by $\sigma \cdot \boldsymbol{n}=0$ on $\Gamma_a$ with
	the matrix-vector multiplication of the matrix $\sigma$ and the unit normal vector $\boldsymbol{n}$. We assume nonlinear sliding at parts of the bedrock $\Gamma_b$ that are not frozen to the ground. This sliding is represented by tangential sliding. For these parts of the bedrock, we assume that the normal component of the velocity is $0$ because we neglect the melting of ice or freezing of water at the bedrock. We remind of the definition of the tangential components
	\begin{align}\label{Tangential}
	\boldsymbol{v_t}=\boldsymbol{v}-(\boldsymbol{v}\cdot \boldsymbol{n})\boldsymbol{n}\quad\text{ and }\quad
	\boldsymbol{\sigma_t}=\sigma\cdot \boldsymbol{n}-(\sigma \cdot \boldsymbol{n}\cdot \boldsymbol{n})\boldsymbol{n}.
	\end{align}
	Let $\tau\in L^{\infty}(\Gamma_b)$, $\tau\geq 0$, $s\in (1,p]$. We formulate the boundary conditions as follows:
	\begin{align}
	\boldsymbol{v}&=0 \quad\quad\quad\quad\quad\text{on }\Gamma_d, \label{Gammad} \\
	\sigma\cdot \boldsymbol{n} &= 0 \quad\quad\quad\quad\quad\text{on }\Gamma_a \label{Gammaa}
	\\  
	\boldsymbol{\sigma}_t&=-\tau S^s(\boldsymbol{v_t})\quad \text{ on } \Gamma_b\text{,}\label{Gammab1}
	\\
	\boldsymbol{v}\cdot \boldsymbol{n}&=0\quad\quad\quad\quad\quad \text{on }\Gamma_b.\label{Gammab}
	\end{align}
	For the choice of the boundary conditions see also \cite{Chen2013}. The boundary conditions imply as a natural choice of the space
	\begin{definition}[Function space]\label{Wp}
		Let $|\Gamma_d|>0$. We define for all $p>1$
		\begin{align}
		W_p:=\{\boldsymbol{v}\in W^{1,p}(\Omega)^N\text{; }\boldsymbol{v}|_{\Gamma_d}=0\text{, }\boldsymbol{v}|_{\Gamma_b}\cdot \boldsymbol{n}|_{\Gamma_b}=0\}
		\end{align}
		with norm
		\begin{align*}
		\|\boldsymbol{v}\|_{W_p}^p=\int_{\Omega}\bigg(\sum_{i,j=1}^N\Big(\frac{\partial v_i}{\partial x_j}\Big)^2\bigg)^{p/2}\, dx.
		\end{align*}
	\end{definition}
	The restriction $|\Gamma_d|>0$ is also fulfilled in common applications, see for example \cite[Abstract]{Bons2018} or \cite[Abstract]{MacGregor2016}. 
	
	The Poincar\'{e} inequality implies that the $W_p$-norm is equivalent to the $W^{1,p}(\Omega)^N$-norm. For example, the Poincar\'{e} inequality was proved in \cite[Theorem 1.5]{Chipot2002} for $p=2$. But the proof is identical for $p\in (1,2)$.
	\subsection{Variational formulation}\label{MonotonOpertors}
	We derive the variational formulation in the space $W_p$. We define the double scalar product and obtain the following equation for $\sigma,\tau \in \mathbb{R}^{N \times N}$ immediately:
	\begin{align}\label{Transpose}
	\sigma:\tau:=\sum_{i,j=1}^N \sigma_{ij}\tau_{ij}\text{, }\quad \sigma:\tau^T=\sum_{i,j=1}^N\sigma_{ij}\tau_{ji}=\sigma^T:\tau.
	\end{align}
	Let $p'$ be the dual exponent, $\boldsymbol{\phi}\in W_{p'}$, $\sigma \in W^{1,p}(\Omega)^{N \times N}$. We conclude by multiplication with the test function $\boldsymbol{\phi}$ and partial integration for the left-hand side of equation $(\ref{FirstPart})$
	\begin{align}\label{VarSummands}
	-\int_{\Omega}\mathrm{div}\sigma \cdot \boldsymbol{\phi}\, dx= \int_{\Omega} \sigma :\nabla \boldsymbol{\phi}\, dx-\int_{\partial \Omega}\sigma\cdot \boldsymbol{n}\cdot \boldsymbol{\phi}\, ds.
	\end{align}
	Let $\boldsymbol{v}\in W_{p}$, $\pi \in L^{p'}(\Omega)$. We use $\sigma=-\pi I+BS^p(D\boldsymbol{v})$ and $I:\nabla \boldsymbol{\phi}=\mathrm{div}\boldsymbol{\phi}$ to obtain for the first summand on the right-hand side of equation $(\ref{VarSummands})$
	\begin{align*}
	\int_{\Omega}\sigma :\nabla \boldsymbol{\phi} \, dx
	=-\int_{\Omega}\pi \mathrm{div}\boldsymbol{\phi}\, dx +\int_{\Omega}BS^p(D\boldsymbol{v}):\nabla \boldsymbol{\phi} \, dx.
	\end{align*}
	The second summand on the right-hand side of equation $(\ref{VarSummands})$ vanishes on $\Gamma_d$ because the test function $\boldsymbol{\phi}\in W_{p'}$ vanishes on $\Gamma_d$, see the definition of $W_{p'}$ in Definition $\ref{Wp}$. Moreover, $\sigma\cdot \boldsymbol{n}=0$ is valid on $\Gamma_a$, see equation $(\ref{Gammaa})$. Thus, the integral over this domain disappears, too. It follows
	\begin{align*}
	-\int_{\partial \Omega}\sigma\cdot \boldsymbol{n}\cdot \boldsymbol{\phi}\, ds
	=-\int_{\Gamma_b}\sigma\cdot \boldsymbol{n}\cdot \boldsymbol{\phi}\, ds.
	\end{align*}
	The relation between tangential and normal components, see $(\ref{Tangential})$, implies
	\begin{align*}
	(\sigma \cdot \boldsymbol{n})\cdot \boldsymbol{\phi}=(\sigma \cdot \boldsymbol{n}\cdot \boldsymbol{n})(\boldsymbol{\phi}\cdot \boldsymbol{n})+\boldsymbol{\sigma_t}\cdot \boldsymbol{\phi_t}.
	\end{align*}
	We split the boundary integral on $\Gamma_b$ into normal and tangential components and use the boundary conditions on $\Gamma_b$, see equation $(\ref{Gammab1})$, and the definition of $\boldsymbol{\phi}\in W_{p'}$, namely $\boldsymbol{\phi}\cdot \boldsymbol{n}=0$ on $\Gamma_b$, see Definition $\ref{Wp}$, to obtain
	\begin{align*}
	-\int_{\Gamma_b}\sigma \cdot \boldsymbol{n} \cdot \boldsymbol{\phi}\, ds
	&=-\int_{\Gamma_b}(\sigma \cdot \boldsymbol{n} \cdot \boldsymbol{n}) (\boldsymbol{\phi} \cdot \boldsymbol{n})\, ds-\int_{\Gamma_b}\boldsymbol{\sigma}_t \cdot \boldsymbol{\phi}_t \, ds
	\\
	&=\int_{\Gamma_b}\tau \big(|\boldsymbol{v_t}|^2+\delta^2\big)^{(s-2)/2}\boldsymbol{v_t}\cdot \boldsymbol{\phi}_t \, ds.
	\end{align*}
	Due to equation $(\ref{Tangential})$ and equation $(\ref{Gammab})$, we have $\boldsymbol{v}=\boldsymbol{v_t}$ on $\Gamma_b$, which yields
	\begin{align*}
	\int_{\Gamma_b}\tau \big(|\boldsymbol{v_t}|^2+\delta^2\big)^{(s-2)/2}\boldsymbol{v_t}\cdot \boldsymbol{\phi}_t \, ds
	&=\int_{\Gamma_b}\tau \big(|\boldsymbol{v}|^2+\delta^2\big)^{(s-2)/2}\boldsymbol{v}\cdot \boldsymbol{\phi}\, ds
	\\
	&=\int_{\Gamma_b}\tau S^s(\boldsymbol{v})\cdot \boldsymbol{\phi} \, ds.
	\end{align*}
	We set $L^{p'}_0(\Omega):=\{\pi \in L^{p'}(\Omega)\text{; } \int_{\Omega}\pi \, dx=0\}$. Then a weak solution of the problem is given by $(\boldsymbol{v},\pi)\in (W_p,L^{p'}_0(\Omega))$ with
	\begin{align}\label{MixedElementProblem}
	\begin{split}
	\langle A\boldsymbol{v},\boldsymbol{\phi}\rangle_{W_p^*,W_p}-(\pi,\mathrm{div}\boldsymbol{\phi})&=(\rho \boldsymbol{g},\boldsymbol{\phi})\quad \text{for all }\boldsymbol{\phi}\in W_p,
	\\
	-(\mathrm{div}\boldsymbol{v},\psi)&=0\qquad \qquad\text{ for all }\psi \in L^{p'}_0(\Omega)
	\end{split}
	\end{align}
	with the operator $A:W_p\to W_p^*$,
	\begin{align*}
	\langle A\boldsymbol{v},\boldsymbol{\phi}\rangle_{W_p^*,W_p}
	=\int_{\Omega}BS^p(D\boldsymbol{v}):\nabla \boldsymbol{\phi}\, dx+\int_{\Gamma_b}\tau S^s(\boldsymbol{v})\cdot \boldsymbol{\phi}\, ds\text{, }\quad \boldsymbol{\phi}\in W_p.
	\end{align*}
	The operator $A$ is well-defined due to Lemma $\ref{LqS}$.
	\begin{definition}[Divergence free space]
		Let $|\Gamma_d|>0$. We define for all $p>1$ the divergence-free velocity space 
		\begin{align}
		V_p:=\{\boldsymbol{v}\in W_p\text{; }\mathrm{div}\boldsymbol{v}=0\}
		\end{align}
		with $W_p$ as in Definition $(\ref{Wp})$.
	\end{definition}
	Now, we examine surjectivity of the restricted operator $A:V_p\to V_p^*$,
	\begin{align}\label{WeakSolution}
	\langle A\boldsymbol{v},\boldsymbol{\phi}\rangle_{V_p^*,V_p}=\int_{\Omega}\rho \boldsymbol{g}\cdot \boldsymbol{\phi}\, dx \quad \text{ for all }\phi \in V_p.
	\end{align}
	By solving the divergence-free problem, we solve the original problem: The inf-sup condition is proved by \cite[Corollary 3.2]{Amrouche1994} for Dirichlet boundary conditions:
	\begin{align*}
	\inf_{\pi \in L^{p'}_0(\Omega)}\sup_{\boldsymbol{v}\in W^{1,p}_0(\Omega)^N}\frac{(\pi,\mathrm{div}\boldsymbol{v})}{\|\boldsymbol{v}\|_{W^{1,p}_0(\Omega)^N}\|\pi\|_{L^{p'}(\Omega)}}\geq c>0.
	\end{align*}
	The inf-sup condition is also fulfilled for $W_p\supseteq W^{1,p}_0(\Omega)^N$. Thus, we can apply \cite[Theorem IV.1.4]{Girault1986}. This yields for each solution in the divergence-free formulation a unique solution for the original problem. This theorem is stated in Hilbert spaces, but the proof is identical for operators on Banach spaces with an existing dual operator.
	\subsection{Regularization of the equation and the divergence-free space}
	In this subsection, we introduce a regularization and the divergence-free space. We add a small diffusion term that allows us to obtain a solution in $V_2$.  We need solutions in $V_2$ and not only in $V_p$ to obtain G\^{a}teaux differentiability in section $\ref{DiffG}$. We define for $\mu_0>0$ the operator $A:V_2\to V_2^*$,
	\begin{align}\label{OperatorA}
	\langle A\boldsymbol{v},\boldsymbol{\phi}\rangle_{V_2^*,V_2}=\langle A\boldsymbol{v},\boldsymbol{\phi}\rangle_{V_p^*,V_p}+\mu_0(\nabla \boldsymbol{v},\nabla \boldsymbol{\phi}).
	\end{align}
	
	We define a solution of the $p$-Stokes equations:
	\begin{definition}[Weak solution of the $p$-Stokes equations]\label{WeakSolutionDef}
		Let $p\in (1,2)$, $|\Gamma_d|>0$, $\delta>0$, and $\mu_0>0$. A solution of 	
		\begin{align}\label{WeakSolutionNew}
		\langle A\boldsymbol{v},\boldsymbol{\phi}\rangle_{V_2^*,V_2}=\int_{\Omega}\rho \boldsymbol{g}\cdot \boldsymbol{\phi}\, dx\quad \text{ for all }\boldsymbol{\phi} \in V_2.
		\end{align}
		is called weak solution of the $p$-Stokes equations.
	\end{definition}
	For $\mu_0=0$ and $\delta=0$ existence and uniqueness of a solution to equation $(\ref{WeakSolution})$ is shown in \cite{Chen2013}; for $\mu=0$, $\delta>0$, and $\Gamma_d=\partial \Omega$, see \cite{Hirn2013}. We consider $\mu_0>0$ and $\delta>0$ because we want G\^{a}teaux differentiability in the infinite-dimensional space. In finite dimensions, $\mu_0>0$ is not necessary, see \cite[section $3$]{Hirn2013}. 
	
	For vanishing $\mu_0$ and $\delta$, the weak solution of the $p$-Stokes equations converges to the solution of $\mu_0=0$ and $\delta=0$ under slight regularity assumptions, see section $\ref{ConvergenceRegularisation}$. 
	\subsection{Equivalent minimization problem}\label{EquivalentProblem}
	In this subsection, we introduce a minimization problem, which is equivalent to finding a solution to the regularized $p$-Stokes equations. In \cite{Hirn2013}, the convex functional was introduced for $\mu_0=0$ and Dirichlet boundary conditions. In \cite{Chen2013}, it was introduced for $\mu_0=0$ and $\delta=0$ for more general boundary conditions. We use those formulations for our convex functional with $\mu_0,\delta \in [0,\infty)$.
	\begin{definition}[Convex functional]\label{ConvexFunction}
		Let $\mu_0,\delta\in [0,\infty)$, $J_{\mu_0,\delta}:V_r\to \mathbb{R}$ with $r=2$ for $\mu_0>0$ and $r=p$ for $\mu_0=0$,
		\begin{align}
		J_{\mu_0,\delta}(\boldsymbol{v})=&\int_{\Omega}\frac{B}{p}\big(|D\boldsymbol{v}|^2+\delta^2\big)^{p/2}\, dx
		+\int_{\Gamma_b}\frac{\tau}{s}\big(|\boldsymbol{v}|^2+\delta^2\big)^{s/2}\, ds+\mu_0(\nabla \boldsymbol{v},\nabla \boldsymbol{v})-(\rho \boldsymbol{g},\boldsymbol{v}).
		\end{align}
	\end{definition}
	
	\begin{lemma}
		Let $\mu_0,\delta\in [0,\infty)$. The functional $J_{\mu_0,\delta}$ is Fr\'{e}chet differentiable with
		\begin{align*}
		J_{\mu_0,\delta}'(\boldsymbol{v})\boldsymbol{w}=\langle A\boldsymbol{v},\boldsymbol{w}\rangle_{V_2^*,V_2}-(\rho \boldsymbol{g},\boldsymbol{w}).
		\end{align*}
	\end{lemma}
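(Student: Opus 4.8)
The plan is to split the functional as $J_{\mu_0,\delta}=F_1+F_2+F_3+F_4$, where $F_1(\boldsymbol v)=\int_\Omega\frac Bp(|D\boldsymbol v|^2+\delta^2)^{p/2}\,dx$, $F_2(\boldsymbol v)=\int_{\Gamma_b}\frac\tau s(|\boldsymbol v|^2+\delta^2)^{s/2}\,ds$, $F_3(\boldsymbol v)=\mu_0(\nabla\boldsymbol v,\nabla\boldsymbol v)$ (present only when $\mu_0>0$, in which case $V_r=V_2$), and $F_4(\boldsymbol v)=-(\rho\boldsymbol g,\boldsymbol v)$, and to prove that each summand is Fr\'{e}chet differentiable on $V_r$ with the derivative forced by the statement. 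The claim then follows by adding the four derivatives, using that $S^p(D\boldsymbol v)$ is symmetric so that $S^p(D\boldsymbol v):\nabla\boldsymbol w=S^p(D\boldsymbol v):D\boldsymbol w$. As a preliminary one checks that the candidate total derivative $\boldsymbol w\mapsto\langle A\boldsymbol v,\boldsymbol w\rangle-(\rho\boldsymbol g,\boldsymbol w)$ is indeed a bounded linear functional on $V_r$: this uses Lemma~\ref{LqS} (so $S^p(D\boldsymbol v)\in L^{p'}(\Omega)^{N\times N}$) and its boundary analogue ($|S^s(\boldsymbol v)|\le|\boldsymbol v|^{s-1}$ together with $\boldsymbol v|_{\Gamma_b}\in L^s(\Gamma_b)^N$ by the trace theorem), $\nabla\boldsymbol v\in L^2(\Omega)^{N\times N}$ when $\mu_0>0$, $\rho\boldsymbol g\in L^{r'}(\Omega)^N$, and H\"{o}lder's inequality.

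The essential work is the bulk term $F_1$. I would set $\varphi(P):=\frac1p(|P|^2+\delta^2)^{p/2}$ on $\mathbb R^{N\times N}$, note that $\varphi\in C^1$ with $\nabla\varphi=S^p$, and establish the pointwise estimate: there is $C=C(p)$ with $|\varphi(A+H)-\varphi(A)-S^p(A):H|\le C|H|^p$ for all $A,H\in\mathbb R^{N\times N}$ and all $\delta\ge0$. This comes from writing the left-hand side as $\int_0^1\big(S^p(A+tH)-S^p(A)\big):H\,dt$ and applying the standard local estimate for the regularized $p$-nonlinearity, $|S^p(X)-S^p(Y)|\le C(p)\,(\delta+|X|+|Y|)^{p-2}|X-Y|\le C(p)\,|X-Y|^{p-1}$ (see e.g.\ \cite{Hirn2013}), where the last inequality uses $1<p<2$ and $|X-Y|\le|X|+|Y|\le\delta+|X|+|Y|$. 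Evaluating this with $A=D\boldsymbol v$, $H=D\boldsymbol w$ and integrating against $B\in L^\infty(\Omega)$ yields $\big|F_1(\boldsymbol v+\boldsymbol w)-F_1(\boldsymbol v)-\int_\Omega B\,S^p(D\boldsymbol v):D\boldsymbol w\,dx\big|\le C\|B\|_{L^\infty}\int_\Omega|D\boldsymbol w|^p\,dx$. If $\mu_0=0$ (so $r=p$) this is $\le C\|B\|_{L^\infty}\|\boldsymbol w\|_{V_p}^p=o(\|\boldsymbol w\|_{V_p})$ since $p>1$; if $\mu_0>0$ (so $r=2$), H\"{o}lder's inequality on the bounded domain $\Omega$ gives $\int_\Omega|D\boldsymbol w|^p\le|\Omega|^{1-p/2}\|\nabla\boldsymbol w\|_{L^2}^p=|\Omega|^{1-p/2}\|\boldsymbol w\|_{V_2}^p=o(\|\boldsymbol w\|_{V_2})$, again because $p>1$. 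This is precisely Fr\'{e}chet differentiability of $F_1$ at $\boldsymbol v$, and the uniformity in $\boldsymbol v$ that it needs is supplied by the pointwise estimate.

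The friction term $F_2$ is handled identically, with $\psi(\boldsymbol u):=\frac1s(|\boldsymbol u|^2+\delta^2)^{s/2}$ on $\mathbb R^N$, $\nabla\psi=S^s$, and the same local estimate for $S^s$ (note $s\in(1,p]\subset(1,2)$): it gives $|\psi(\boldsymbol a+\boldsymbol h)-\psi(\boldsymbol a)-S^s(\boldsymbol a)\cdot\boldsymbol h|\le C|\boldsymbol h|^s$, so after integrating over $\Gamma_b$ against $\tau\in L^\infty(\Gamma_b)$ and using the trace embedding $W^{1,r}(\Omega)\hookrightarrow L^s(\Gamma_b)$ (valid since $s\le p\le r$ and $|\Gamma_b|<\infty$) the remainder is $\le C\|\tau\|_{L^\infty}\|\boldsymbol w\|_{L^s(\Gamma_b)}^s\le C\|\boldsymbol w\|_{V_r}^s=o(\|\boldsymbol w\|_{V_r})$ because $s>1$. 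The last two terms are routine: $F_3$ is a bounded quadratic form on $V_2$, hence smooth with derivative the associated symmetric bilinear form, and $F_4$ is bounded linear, hence its own derivative. Adding the four derivatives gives the stated formula for $J_{\mu_0,\delta}'$.

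I expect the main obstacle to be exactly the nonlinear term $F_1$ (and, verbatim, $F_2$): because $1<p<2$ the integrand $\varphi$ is not $C^{1,1}$, so a second-order Taylor bound is not available and one must instead exploit the fine structure of $S^p$ through the local modulus-of-continuity estimate; one then has to turn the resulting bound, a $p$-th power of a norm, into a genuine $o(\|\boldsymbol w\|_{V_r})$, which is where $p>1$ (respectively $s>1$) is used and which is the reason one obtains Fr\'{e}chet rather than merely G\^{a}teaux differentiability.
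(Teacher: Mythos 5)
Your argument is correct, but it takes a genuinely different route from the paper's. The paper's proof is citation-based: for $\delta>0$ it refers to \cite{Hirn2013} for the G\^{a}teaux differentiability of the bulk power-law term (and says the boundary term is handled identically), for $(\mu_0,\delta)=(0,0)$ it refers to \cite{Chen2013}, notes that the quadratic diffusion summand and the linear forcing summand are trivially differentiable, and then upgrades G\^{a}teaux to Fr\'{e}chet differentiability by the standard argument that the derivative $\boldsymbol{v}\mapsto A\boldsymbol{v}-\rho\boldsymbol{g}$ is continuous (cf. Lemma \ref{Lipschitz}). You instead verify the Fr\'{e}chet property directly: the pointwise estimate $|\varphi(P+H)-\varphi(P)-S^p(P):H|\leq C|H|^p$, obtained from $|S^p(X)-S^p(Y)|\leq C(\delta+|X|+|Y|)^{p-2}|X-Y|\leq C|X-Y|^{p-1}$ --- which is precisely the content of the paper's Lemma \ref{PDeltaConditions} combined with Lemma \ref{LipschitzEstimate}, with constants uniform in $\delta\geq 0$ --- makes the remainder $O(\|\boldsymbol{w}\|_{V_r}^p)$ for the bulk term and, via the trace embedding, $O(\|\boldsymbol{w}\|_{V_r}^s)$ for the friction term, hence $o(\|\boldsymbol{w}\|_{V_r})$ because $p,s>1$. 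This is self-contained, quantitative, treats $\mu_0=0$ and $\delta=0$ on the same footing, and avoids both the external references and the G\^{a}teaux-plus-continuous-derivative theorem; what the paper's version buys is brevity and reuse of published results. One shared wrinkle: the derivative of $\boldsymbol{v}\mapsto\mu_0(\nabla\boldsymbol{v},\nabla\boldsymbol{v})$ is $2\mu_0(\nabla\boldsymbol{v},\nabla\boldsymbol{w})$, so the stated formula matches the operator in equation (\ref{OperatorA}) only up to a factor of two in the diffusion term unless Definition \ref{ConvexFunction} is read with $\mu_0/2$; your phrase about the ``associated symmetric bilinear form'' inherits this slip from the statement rather than introducing it, but it is worth flagging.
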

	\begin{proof}
		For $\delta>0$,  the G\^{a}teaux differentiability of the first and fourth summand are discussed in \cite{Hirn2013} on $W^{1,p}_0(\Omega)^N$. Because the boundary conditions do not influence this integral, the G\^{a}teaux differentiability is also clear on $V_2$. The second summand can be handled identically to the first summand. Also the term $\boldsymbol{v}\mapsto \mu_0(\nabla \boldsymbol{v},\nabla \boldsymbol{v})$ is G\^{a}teaux differentiable. 
		
		For $(\mu_0,\delta)=0$, differentiability is proved in \cite{Chen2013}. Adding the summand $\mu_0(\nabla \boldsymbol{v},\nabla \boldsymbol{v})$ does not change the differentiability result.
		
		Thus, $J_{\mu_0,\delta}$ is G\^{a}teaux differentiable. Because $A$ and $(\boldsymbol{v},\boldsymbol{w})\mapsto (\rho \boldsymbol{g},\boldsymbol{w})$ are continuous, $J_{\mu_0,\delta}$ is Fr\'{e}chet differentiable.
	\end{proof}
	\subsection{Existence and uniqueness of weak solutions in the divergence-free space}
	Differentiability and uniqueness were proven for $\delta=0=\mu_0$ in \cite{Chen2013} by using the strictly convex functionals as in Definition $\ref{ConvexFunction}$. For Dirichlet zero boundary conditions existence and uniqueness of solutions were stated without detailed proof via the Browder-Minty Theorem, e.g., in \cite{Berselli2017}. For completeness, we prove existence and uniqueness of a solution with the Browder-Minty Theorem as our boundary conditions, see Equations $(\ref{Gammad})$ to $(\ref{Gammab})$ are more complicated than the standard Dirichlet boundary conditions and with $\delta>0$ and $\mu_0>0$ different to the problem in \cite{Chen2013}. Thus, we prove a well-known result for a slightly different formulation.
	
	Before we can state the existence and uniqueness result, we have to introduce the following definitions:
	\begin{definition}[Strict monotonicity, coercivity]
		Let $X$ be a reflexive Banach space, $A:X\to X^*$. The operator $A$ is called strictly monotone, if we have for all $\boldsymbol{v},\boldsymbol{w}\in X$
		\begin{align*}
		\langle A\boldsymbol{v}-A\boldsymbol{w},\boldsymbol{v}-\boldsymbol{w}\rangle_{X^*,X}>0.
		\end{align*}
		The operator $A$ is called coercive, if we have
		\begin{align*}
		\lim_{\|\boldsymbol{v}\|_X\to \infty}\frac{\langle A\boldsymbol{v},\boldsymbol{v}\rangle_{X^*,X}}{\|\boldsymbol{v}\|_X}=\infty.
		\end{align*}
	\end{definition}
	The conditions for existence and uniqueness are formulated in the following theorem.
	\begin{theorem}[Browder-Minty Theorem]\label{BrowderMinty}
		Let $X$ be a reflexive separable Banach space and $A:X\to X^*$ a strictly monotone, coercive, and continuous operator. Let $f\in X^*$. Then, there exists a unique solution $u\in X$ for the equation 
		\begin{align*}
		Au=f\quad \text{ in }X^*.
		\end{align*}
	\end{theorem}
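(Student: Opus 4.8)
The plan is to prove existence by the classical Galerkin method together with Minty's monotonicity trick, and then to read off uniqueness directly from strict monotonicity; coercivity will only be used to produce \emph{a priori} bounds. Since $X$ is separable, fix an increasing chain of finite-dimensional subspaces $X_1\subseteq X_2\subseteq\cdots$ whose union is dense in $X$. On $X_n$, identifying $X_n^*$ with $X_n$ via a fixed inner product, the map $w\mapsto \langle Aw-f,\cdot\,\rangle|_{X_n}$ is continuous, and coercivity provides $R>0$ (independent of $n$) with $\langle Aw-f,w\rangle_{X^*,X}>0$ for all $w$ with $\|w\|_X=R$. The finite-dimensional corollary of Brouwer's fixed point theorem (the ``acute angle'' lemma) then yields a Galerkin solution $u_n\in X_n$ with $\|u_n\|_X\le R$ and $\langle Au_n-f,v\rangle_{X^*,X}=0$ for all $v\in X_n$.

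By the uniform bound $\|u_n\|_X\le R$ and reflexivity of $X$, a subsequence (not relabeled) satisfies $u_n\rightharpoonup u$ in $X$. A monotone operator defined on all of $X$ and continuous (hence hemicontinuous) is locally bounded, so $\{Au_n\}$ is bounded in $X^*$; passing to a further subsequence, $Au_n\rightharpoonup\chi$ in $X^*$. For each fixed $m$ and every $v\in X_m$ one has $\langle Au_n,v\rangle_{X^*,X}=\langle f,v\rangle_{X^*,X}$ once $n\ge m$, so $\langle \chi,v\rangle_{X^*,X}=\langle f,v\rangle_{X^*,X}$; by density of $\bigcup_m X_m$ in $X$ this gives $\chi=f$ in $X^*$.

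It remains to identify $\chi$ with $Au$, and here monotonicity enters. Testing the Galerkin equation with $u_n$ itself gives $\langle Au_n,u_n\rangle_{X^*,X}=\langle f,u_n\rangle_{X^*,X}\to\langle f,u\rangle_{X^*,X}=\langle\chi,u\rangle_{X^*,X}$. For arbitrary $w\in X$, monotonicity yields $\langle Au_n-Aw,u_n-w\rangle_{X^*,X}\ge 0$, and expanding this product and passing to the limit using $Au_n\rightharpoonup\chi$ and $u_n\rightharpoonup u$ gives
\[
0\le \langle\chi,u\rangle_{X^*,X}-\langle\chi,w\rangle_{X^*,X}-\langle Aw,u-w\rangle_{X^*,X}=\langle \chi-Aw,u-w\rangle_{X^*,X}.
\]
Choosing $w=u-tz$ with $t>0$ and $z\in X$ arbitrary, dividing by $t$, and letting $t\to 0^+$ (using continuity of $s\mapsto A(u-sz)$) gives $\langle\chi-Au,z\rangle_{X^*,X}\ge 0$ for every $z\in X$, hence $\chi=Au$, i.e.\ $Au=f$. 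Finally, if $Au_1=Au_2=f$, then $\langle Au_1-Au_2,u_1-u_2\rangle_{X^*,X}=0$, which by strict monotonicity forces $u_1=u_2$; this yields uniqueness.

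The delicate point is the identification $\chi=Au$: the operator $A$ need not be weak-to-weak continuous, so $u_n\rightharpoonup u$ does not by itself pass to the limit in the nonlinear term. Monotonicity, exploited through Minty's trick together with the energy identity $\langle Au_n,u_n\rangle=\langle f,u_n\rangle$, is exactly the substitute for the missing compactness; the auxiliary local-boundedness lemma for monotone operators invoked above is likewise needed merely to extract a weak limit of the sequence $Au_n$.
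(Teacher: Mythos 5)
The paper does not actually prove this theorem: it cites Browder (1963, Theorem 2) for existence and only notes that uniqueness is immediate from strict monotonicity. You instead give the standard self-contained argument (Galerkin approximation in a dense chain of finite-dimensional subspaces, the acute-angle corollary of Brouwer's theorem with a radius $R$ supplied uniformly by coercivity, weak compactness from reflexivity, and Minty's trick combined with the energy identity $\langle Au_n,u_n\rangle=\langle f,u_n\rangle$ to identify the weak limit of $Au_n$ with $Au$), plus the same one-line uniqueness argument. This is a legitimate and more informative route than the paper's citation, and the Minty/hemicontinuity limit passage and the identification $\chi=f$ by density are carried out correctly.

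One step needs repair: you justify the boundedness of $(Au_n)$ in $X^*$ by saying that a continuous monotone operator is locally bounded. Local boundedness gives a bound only on some neighborhood of each point; since the ball $\{\|w\|_X\le R\}$ is not compact in infinite dimensions, this does not yield a bound for $Au_n$ along the whole sequence, and indeed an everywhere-defined continuous monotone operator need not map bounded sets to bounded sets. The standard fix uses exactly the tools you already have: by monotonicity, for every fixed $w\in X$,
\begin{align*}
\langle Au_n,w\rangle_{X^*,X}\le \langle Au_n,u_n\rangle_{X^*,X}-\langle Aw,u_n-w\rangle_{X^*,X}
=\langle f,u_n\rangle_{X^*,X}-\langle Aw,u_n-w\rangle_{X^*,X},
\end{align*}
which is bounded in $n$ because $\|u_n\|_X\le R$; applying this also to $-w$ shows $\sup_n|\langle Au_n,w\rangle_{X^*,X}|<\infty$ for each $w$, and the uniform boundedness principle then gives $\sup_n\|Au_n\|_{X^*}<\infty$. (Alternatively, in the paper's application the operator is Lipschitz, hence bounded on bounded sets, so the step is harmless there.) With this replacement your proof is complete.
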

	\begin{proof} The existence of $u\in X$ is proved in \cite{Browder1963} Theorem $2$ in a more general version. Uniqueness follows immediately with the strict monotonicity of $A$.
	\end{proof}
	We analyze $S^p$ and $S^s$ to verify that $A$ is strictly monotone, coercive, and continuous. For that purpose, we need the following result:
	\begin{lemma}\label{PDeltaConditions}
		Let $r\in (1,2)$, $S:\mathbb{R}^{N \times N}\to \mathbb{R}^{N \times N}$ with
		\begin{align}
		\sum_{i,j,k,\ell=1}^N\frac{\partial S_{ij}(P)}{\partial P_{kl}}Q_{ij}Q_{k\ell}&\geq c_1(|P|+\delta)^{r-2}|Q|^2,\label{LowerBound}
		\\
		\bigg|\frac{\partial S_{ij}(P)}{\partial P_{k\ell}}\bigg|&\leq c_2(|P|+\delta)^{r-2}\quad \text{ for all }i,j,k,\ell\in \{1,...,N\}\label{UpperBound}
		\end{align}
		for all $P,Q\in \mathbb{R}^{N \times N}$, $c_1,c_2\in (0,\infty)$, $\delta\geq 0$. Then, there exist $c,C\in \mathbb{R}$ independent of $\delta$ for all $P,Q\in \mathbb{R}^{N \times N}$ with
		\begin{align*}
		(S(P)-S(Q)):(P-Q)&\geq c (\delta+|P|+|Q|)^{r-2}|P-Q|^2,
		\\
		|S(P)-S(Q)|&\leq  C (\delta+|P|+|Q|)^{r-2}|P-Q|.
		\end{align*}
	\end{lemma}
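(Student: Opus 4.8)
\textit{Proof plan.} The plan is to get both inequalities from the fundamental theorem of calculus along the segment joining $Q$ to $P$, thereby reducing the lemma to a single one‑variable integral estimate. Write $P_t:=Q+t(P-Q)$ for $t\in[0,1]$. Since $S$ is differentiable with the bound $(\ref{UpperBound})$ — which is integrable along line segments because $r-2>-1$ — the fundamental theorem of calculus gives, componentwise,
\[
S_{ij}(P)-S_{ij}(Q)=\int_0^1\sum_{k,\ell}\frac{\partial S_{ij}(P_t)}{\partial P_{k\ell}}\,(P-Q)_{k\ell}\,dt .
\]
Contracting this with $P-Q$ and applying the ellipticity bound $(\ref{LowerBound})$ at $P_t$ with test direction $P-Q$ yields
\[
(S(P)-S(Q)):(P-Q)\ \ge\ c_1\,|P-Q|^2\int_0^1\big(\delta+|P_t|\big)^{r-2}\,dt ,
\]
whereas estimating each component by the Cauchy--Schwarz inequality together with $(\ref{UpperBound})$ yields
\[
|S(P)-S(Q)|\ \le\ N^2 c_2\,|P-Q|\int_0^1\big(\delta+|P_t|\big)^{r-2}\,dt .
\]
Hence both claims follow once I prove the two‑sided bound
\[
(\delta+|P|+|Q|)^{r-2}\ \le\ \int_0^1\big(\delta+|P_t|\big)^{r-2}\,dt\ \le\ C(r)\,(\delta+|P|+|Q|)^{r-2},
\]
with $C(r)$ independent of $\delta$. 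The left inequality is immediate: $|P_t|\le(1-t)|Q|+t|P|\le|P|+|Q|$, and $r-2<0$ makes the integrand pointwise at least $(\delta+|P|+|Q|)^{r-2}$.

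For the right inequality I would distinguish three cases according to the sizes of $d:=|P-Q|$, $m:=\max(|P|,|Q|)$ and $\delta$. If $d\le m/2$, say $m=|Q|$, then $|P_t|\ge|Q|-d\ge|Q|/2$, hence $\delta+|P_t|\ge\tfrac14(\delta+|P|+|Q|)$ and the integrand is bounded as required; this covers the case where the segment stays away from the origin. If $d>m/2$ and $\delta\ge d$, then $\delta+|P_t|\ge\delta\ge\tfrac15(\delta+|P|+|Q|)$ and again the integrand is bounded. The remaining case $d>m/2$, $\delta<d$ is the one where the segment can pass close to the origin; here I would use the identity $|P_t|^2=d^2(t-t^\ast)^2+h^2$ with $t^\ast=-\,Q:(P-Q)/d^2$ and some $h\ge0$, so that $\delta+|P_t|\ge\delta+d\,|t-t^\ast|$. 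Since $s\mapsto(\delta+d|s|)^{r-2}$ is even and decreasing in $|s|$, the integral over any interval of length one is at most $\int_{-1/2}^{1/2}(\delta+d|s|)^{r-2}\,ds=\tfrac{2}{d(r-1)}\big((\delta+\tfrac{d}{2})^{r-1}-\delta^{r-1}\big)$, which is $\le\tfrac{2^{2-r}}{r-1}\,d^{r-2}$ by the subadditivity of $v\mapsto v^{r-1}$ on $[0,\infty)$. In this case $d>m/2$ and $\delta<d$ force $d\ge\tfrac15(\delta+|P|+|Q|)$, turning $d^{r-2}$ into the required bound. Collecting the three cases gives the estimate with, e.g., $C(r)=\max\{4^{\,2-r},\,5^{\,2-r},\,10^{\,2-r}/(r-1)\}$, so that the final constants $c=c_1$ and $C=N^2c_2\,C(r)$ depend only on $r$, $N$, $c_1$, $c_2$, but not on $\delta$.

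The main obstacle is exactly this last case: bounding $\int_0^1(\delta+|P_t|)^{r-2}\,dt$ when the segment from $Q$ to $P$ comes close to the origin, where the integrand can be as large as $\delta^{r-2}$ with $\delta$ arbitrarily small. What rescues the estimate is that the near‑singular part of the segment is short — of length $\lesssim\delta/d$ — combined with the sign condition $r-1>0$, which makes $\int_\delta^{\delta+d/2}v^{r-2}\,dv$ comparable to $d^{r-1}$ rather than to the much larger $\delta^{r-1}$. The only other point needing care is the (routine) justification that the fundamental theorem of calculus may be applied under the mere pointwise bound $(\ref{UpperBound})$ on the derivative, which again uses $r-2>-1$.
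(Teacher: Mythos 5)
Your proposal is correct, but note that the paper does not actually prove this lemma: its entire ``proof'' is a citation to Lemma 6.3 of Diening, Ebmeyer and R\r{u}\v{z}i\v{c}ka (2007). What you have written is, in effect, a self-contained version of the standard argument behind that citation: the fundamental theorem of calculus along the segment $P_t=Q+t(P-Q)$, followed by the two-sided comparison $\int_0^1(\delta+|P_t|)^{r-2}\,dt\sim(\delta+|P|+|Q|)^{r-2}$, which is the classical Acerbi--Fusco-type ``segment integral'' lemma used throughout the $p$-structure literature. Your details check out: the lower bound of the integral is immediate from $|P_t|\le|P|+|Q|$ and $r-2<0$; in the upper bound, the three cases ($d\le m/2$; $d>m/2$, $\delta\ge d$; $d>m/2$, $\delta<d$) are exhaustive, the identity $|P_t|^2=d^2(t-t^\ast)^2+h^2$ and the symmetric-decreasing rearrangement step are correct, and the subadditivity of $v\mapsto v^{r-1}$ (valid since $0<r-1<1$) gives $(\delta+d/2)^{r-1}-\delta^{r-1}\le(d/2)^{r-1}$, so the constants $c=c_1$ and $C=N^2c_2\,C(r)$ are indeed independent of $\delta$. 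The one delicate point, which you correctly flag as needing the condition $r-2>-1$, is the application of the fundamental theorem of calculus when $\delta=0$ and the segment passes through the origin, where the derivative bound degenerates; there the derivative is still integrable along the segment because $|P_t|\ge d|t-t^\ast|$, so the argument goes through. In short: your route is a genuine proof where the paper outsources to the literature, and it is essentially the proof of the cited result itself.
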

	\begin{proof}
		See \cite[Lemma 6.3]{Diening2007}.
	\end{proof}
	We calculate the derivative of $S^r$, $r\in (1,2)$ and verify the properties stated above to apply the Browder-Minty Theorem $\ref{BrowderMinty}$:
	\begin{lemma}\label{LipschitzEstimate}
		Let $r\in (1,2)$. The function $S^r_{kl}$ is continuously differentiable. Furthermore, the inequalities $(\ref{LowerBound})$ and $(\ref{UpperBound})$ are fulfilled for $S^r$.
	\end{lemma}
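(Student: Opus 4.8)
The plan is to compute the Jacobian of $S^r$ explicitly, note that $\delta>0$ makes it continuously differentiable (indeed $C^\infty$), and then read off $(\ref{LowerBound})$ and $(\ref{UpperBound})$ by elementary estimates, keeping careful track of the sign of $r-2$. Write $S^r_{ij}(P)=\big(|P|^2+\delta^2\big)^{(r-2)/2}P_{ij}$; since $\delta>0$ we have $|P|^2+\delta^2\geq\delta^2>0$, so $t\mapsto t^{(r-2)/2}$ is smooth on the relevant range and each component $S^r_{ij}$, being a product of smooth functions of $P$, is continuously differentiable on $\mathbb{R}^{N\times N}$. Differentiating with the product and chain rules gives
\begin{align*}
\frac{\partial S^r_{ij}}{\partial P_{k\ell}}(P)=\big(|P|^2+\delta^2\big)^{(r-2)/2}\delta_{ik}\delta_{j\ell}+(r-2)\big(|P|^2+\delta^2\big)^{(r-4)/2}P_{ij}P_{k\ell},
\end{align*}
where $\delta_{ik}$ is the Kronecker symbol.

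For $(\ref{LowerBound})$ I would contract this with $Q_{ij}Q_{k\ell}$, which yields
\begin{align*}
\sum_{i,j,k,\ell=1}^N\frac{\partial S^r_{ij}}{\partial P_{k\ell}}Q_{ij}Q_{k\ell}=\big(|P|^2+\delta^2\big)^{(r-2)/2}|Q|^2+(r-2)\big(|P|^2+\delta^2\big)^{(r-4)/2}(P:Q)^2.
\end{align*}
Since $r-2<0$, using the Cauchy--Schwarz bound $(P:Q)^2\leq|P|^2|Q|^2$ in the second (nonpositive) term only decreases the right-hand side; factoring out $\big(|P|^2+\delta^2\big)^{(r-4)/2}|Q|^2$ then leaves the bracket $(r-1)|P|^2+\delta^2$, which is $\geq(r-1)\big(|P|^2+\delta^2\big)$ because $0<r-1<1$. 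Hence the sum is $\geq(r-1)\big(|P|^2+\delta^2\big)^{(r-2)/2}|Q|^2$, and $|P|^2+\delta^2\leq(|P|+\delta)^2$ together with $r-2<0$ gives $\big(|P|^2+\delta^2\big)^{(r-2)/2}\geq(|P|+\delta)^{r-2}$, so $(\ref{LowerBound})$ holds with $c_1=r-1$.

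For $(\ref{UpperBound})$ I would apply the triangle inequality to the two summands of $\partial S^r_{ij}/\partial P_{k\ell}$, bound $|P_{ij}P_{k\ell}|\leq|P|^2\leq|P|^2+\delta^2$, and collect the powers of $|P|^2+\delta^2$ to obtain $\big|\partial S^r_{ij}/\partial P_{k\ell}\big|\leq(3-r)\big(|P|^2+\delta^2\big)^{(r-2)/2}$. Then $(|P|+\delta)^2\leq2\big(|P|^2+\delta^2\big)$ and $r-2<0$ give $\big(|P|^2+\delta^2\big)^{(r-2)/2}\leq2^{(2-r)/2}(|P|+\delta)^{r-2}$, so $(\ref{UpperBound})$ holds with $c_2=(3-r)2^{(2-r)/2}$.

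I do not anticipate a real obstacle: the whole argument is bookkeeping with elementary inequalities. The only delicate points are the repeated use of the sign $r-2<0$ — it reverses inequalities both when manipulating powers of $|P|^2+\delta^2$ and when passing from $(P:Q)^2$ to $|P|^2|Q|^2$ in the indefinite term — and the standing hypothesis $\delta>0$ from the definition of $S^p$, which is exactly what keeps $|P|^2+\delta^2$ away from $0$ and hence makes $S^r$ continuously differentiable on all of $\mathbb{R}^{N\times N}$; for $\delta=0$ this would fail at $P=0$.
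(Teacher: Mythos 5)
Your proposal is correct and follows essentially the same route as the paper: compute the Jacobian $\partial S^r_{k\ell}/\partial P_{ij}$ explicitly, contract with $Q$, use Cauchy--Schwarz together with the sign of $r-2$ to get the coercivity constant $r-1$, and compare $(|P|^2+\delta^2)^{1/2}$ with $|P|+\delta$ for both bounds. Your version even tracks the constants slightly more cleanly (the paper carries an unnecessary $N^2$ in the upper bound); the only item the paper adds beyond your argument is the one-line remark that the vector-valued case follows by embedding vectors as diagonal-type matrices, which is not part of the stated claim.
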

	\begin{proof}
		We verify the conditions in Lemma $\ref{PDeltaConditions}$. Let $P\in \mathbb{R}^{N \times N}$, $i,j\in \{1,...,N\}$. We calculate
		\begin{align*}
		\frac{\partial}{\partial P_{ij}}\big(|P|^2+\delta^2\big)^{(r-2)/2}=(r-2)\Bigg(\bigg(\sum_{k,\ell=1}^NP_{k\ell}^2\bigg)+\delta^2\Bigg)^{(r-4)/2}P_{ij}.
		\end{align*}
		Let $k,\ell\in \{1,..,N\}$. We infer
		\begin{align}\label{DerivativeSr}
		\begin{split}
		\frac{\partial S^r_{k\ell}}{\partial P_{ij}}(P)&=\frac{\partial}{\partial P_{ij}}\Big(\big(|P|^2+\delta^2\big)^{(r-2)/2}P_{k\ell}\Big)
		\\
		&=(r-2)\big(|P|^2+\delta^2\big)^{(r-4)/2}P_{ij}P_{k\ell}+\big(|P|^2+\delta^2\big)^{(r-2)/2}I_{ik}I_{j\ell}.
		\end{split}
		\end{align}
		We can verify inequality $(\ref{UpperBound})$ with $\tilde{c}_1\in \mathbb{R}$ now:
		\begin{align*}
		\bigg|\frac{\partial S^r_{k\ell}(P)}{\partial P_{ij}}\bigg|&\leq N^2\Big((2-r)\big(|P|^2+\delta^2\big)^{(r-4)/2}|P|^2+\big(|P|^2+\delta^2\big)^{(r-2)/2}\Big)
		\\
		&= N^2\Big((2-r)\big(|P|^2+\delta^2\big)^{(r-2)/2}\frac{|P| \, |P|}{|P|^2+\delta^2}+\big(|P|^2+\delta^2\big)^{(r-2)/2}\Big)
		\\
		&\leq \tilde{c}_1N^2(3-r)(|P|+\delta)^{r-2}.
		\end{align*}
		We verify inequality $(\ref{LowerBound})$ next. Let $Q\in \mathbb{R}^{N \times N}$. We conclude
		\begin{align*}
		&\quad\sum_{i,j,k,\ell=1}^N\frac{\partial}{\partial P_{ij}}\big(S^r_{k\ell}(P)\big)Q_{k\ell}Q_{ij}
		\\
		&=\big(|P|^2+\delta^2\big)^{(r-2)/2}\bigg(\sum_{i,j,k,\ell=1}^N(r-2)\frac{P_{ij}P_{k\ell}}{|P|^2+\delta^2}Q_{k\ell}Q_{ij}
		+\sum_{i,j,k,\ell=1}^NI_{ik}I_{jl}Q_{k\ell}Q_{ij}\bigg)
		\\
		&=\big(|P|^2+\delta^2\big)^{(r-2)/2}\bigg((r-2)\frac{1}{|P|^2+\delta^2}\sum_{k,\ell=1}^NP_{k\ell}Q_{k\ell}\sum_{i,j=1}^NP_{ij}Q_{ij}+\sum_{i,j=1}^NQ_{ij}^2\bigg)
		\\
		&=\big(|P|^2+\delta^2\big)^{(r-2)/2}\bigg((r-2)\frac{(P:Q)^2}{|P|^2+\delta^2}+|Q|^2\bigg).
		\end{align*}
		We conclude with $r-2\leq 0$, $(P:Q)^2\leq |P|^2|Q|^2$, and $\tilde{c}_2\in \mathbb{R}$
		\begin{align*}
		\big(|P|^2+\delta^2\big)^{(r-2)/2}\bigg((r-2)\frac{(P:Q)^2}{|P|^2+\delta^2}+|Q|^2\bigg)
		&\geq  \big(|P|^2+\delta^2\big)^{(r-2)/2}(r-1)|Q|^2
		\\
		&\geq  \tilde{c}_2(r-1)(|P|+\delta)^{r-2}|Q|^2.
		\end{align*}
		We set $P:=I_{ij}p$ and $Q:=I_{ij}q$ with $p,q\in \mathbb{R}^N$ for the vector-valued situation.
	\end{proof}
	We verify the three properties for the Browder-Minty Theorem, see $\ref{BrowderMinty}$ in the three following lemmata. We verify Lipschitz continuity of $A$ instead of continuity:
	\begin{lemma}\label{Lipschitz}
		Let $|\Gamma_d|>0$, $\delta>0$, $\mu_0>0$, and $p\in(1,2)$. The operator $A:V_2\to V_2^*$, see equation $(\ref{OperatorA})$ is Lipschitz continuous.
	\end{lemma}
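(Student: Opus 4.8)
The plan is to bound $\langle A\boldsymbol{v}-A\boldsymbol{w},\boldsymbol{\phi}\rangle_{V_2^*,V_2}$ by $L\,\|\boldsymbol{v}-\boldsymbol{w}\|_{V_2}\,\|\boldsymbol{\phi}\|_{V_2}$ term by term, with a constant $L$ that is allowed to depend on $\delta$, $\mu_0$, $\|B\|_{L^\infty}$ and $\|\tau\|_{L^\infty(\Gamma_b)}$. The key observation is that, because $\delta>0$ and $p-2<0$, the estimate of Lemma~\ref{PDeltaConditions} (whose hypotheses hold for $S^p$ by Lemma~\ref{LipschitzEstimate}) degenerates into a \emph{global} Lipschitz bound: for all $P,Q\in\mathbb{R}^{N\times N}$,
\[
|S^p(P)-S^p(Q)|\le C(\delta+|P|+|Q|)^{p-2}|P-Q|\le C\delta^{p-2}|P-Q|,
\]
since $\delta+|P|+|Q|\ge\delta$ and the exponent is negative. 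The identical argument applied to $S^s$ in its vector-valued form (with $s\in(1,p]\subseteq(1,2)$) yields $|S^s(\boldsymbol{a})-S^s(\boldsymbol{b})|\le C\delta^{s-2}|\boldsymbol{a}-\boldsymbol{b}|$ for all $\boldsymbol{a},\boldsymbol{b}\in\mathbb{R}^N$.

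Now fix $\boldsymbol{v},\boldsymbol{w},\boldsymbol{\phi}\in V_2$. For the bulk term I would pull out $\|B\|_{L^\infty}$, use the pointwise inequality $|D\boldsymbol{v}-D\boldsymbol{w}|\le|\nabla(\boldsymbol{v}-\boldsymbol{w})|$ together with the Lipschitz bound for $S^p$, and then apply Cauchy--Schwarz in $L^2(\Omega)$:
\[
\Big|\int_{\Omega}B\big(S^p(D\boldsymbol{v})-S^p(D\boldsymbol{w})\big):\nabla\boldsymbol{\phi}\,dx\Big|
\le \|B\|_{L^\infty}\,C\delta^{p-2}\,\|\nabla(\boldsymbol{v}-\boldsymbol{w})\|_{L^2}\,\|\nabla\boldsymbol{\phi}\|_{L^2},
\]
which equals $\|B\|_{L^\infty}\,C\delta^{p-2}\,\|\boldsymbol{v}-\boldsymbol{w}\|_{V_2}\,\|\boldsymbol{\phi}\|_{V_2}$ by the definition of the $V_2$-norm. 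The diffusion term is immediate from Cauchy--Schwarz: $|\mu_0(\nabla(\boldsymbol{v}-\boldsymbol{w}),\nabla\boldsymbol{\phi})|\le\mu_0\,\|\boldsymbol{v}-\boldsymbol{w}\|_{V_2}\,\|\boldsymbol{\phi}\|_{V_2}$. For the friction term, the Lipschitz bound for $S^s$ and Cauchy--Schwarz on $\Gamma_b$ give
\[
\Big|\int_{\Gamma_b}\tau\big(S^s(\boldsymbol{v})-S^s(\boldsymbol{w})\big)\cdot\boldsymbol{\phi}\,ds\Big|
\le \|\tau\|_{L^\infty(\Gamma_b)}\,C\delta^{s-2}\,\|\boldsymbol{v}-\boldsymbol{w}\|_{L^2(\Gamma_b)}\,\|\boldsymbol{\phi}\|_{L^2(\Gamma_b)},
\]
and then the trace embedding $W^{1,2}(\Omega)^N\hookrightarrow L^2(\Gamma_b)^N$ combined with the Poincar\'e-type norm equivalence on $V_2$ converts both $L^2(\Gamma_b)$-norms into $\|\cdot\|_{V_2}$.

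Summing the three estimates gives $|\langle A\boldsymbol{v}-A\boldsymbol{w},\boldsymbol{\phi}\rangle_{V_2^*,V_2}|\le L\,\|\boldsymbol{v}-\boldsymbol{w}\|_{V_2}\,\|\boldsymbol{\phi}\|_{V_2}$ with $L=\|B\|_{L^\infty}C\delta^{p-2}+\mu_0+C_{\mathrm{tr}}^2\|\tau\|_{L^\infty(\Gamma_b)}C\delta^{s-2}$; taking the supremum over $\|\boldsymbol{\phi}\|_{V_2}\le 1$ yields $\|A\boldsymbol{v}-A\boldsymbol{w}\|_{V_2^*}\le L\,\|\boldsymbol{v}-\boldsymbol{w}\|_{V_2}$. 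I do not expect a genuine obstacle: the only point needing care is that the Lipschitz constants of $S^p$ and $S^s$ are finite precisely because $\delta>0$ --- they blow up like $\delta^{p-2}$ and $\delta^{s-2}$ as $\delta\downarrow 0$ --- so the hypothesis $\delta>0$ enters essentially, exactly as in the statement; a secondary routine point is invoking the trace inequality on the Lipschitz domain $\Omega$ for the piece $\Gamma_b\subseteq\partial\Omega$.
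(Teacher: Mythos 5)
Your proposal is correct and follows essentially the same route as the paper: both rely on the pointwise Lipschitz bound $|S^r(P)-S^r(Q)|\le C(\delta+|P|+|Q|)^{r-2}|P-Q|\le C\delta^{r-2}|P-Q|$ from Lemmas \ref{PDeltaConditions} and \ref{LipschitzEstimate}, followed by Cauchy--Schwarz and the trace operator for the boundary term. Your write-up is marginally more explicit in treating the $\mu_0$ diffusion term and in assembling the final constant, but there is no substantive difference.
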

	\begin{proof}
		Let $\boldsymbol{v},\boldsymbol{w},\boldsymbol{\phi}\in V_2$. We obtain for the second summand of the operator $A$ with Lemma $\ref{LipschitzEstimate}$ and $C\in \mathbb{R}$
		\begin{align*}
		\bigg|\int_{\Gamma_b}\tau S^s(\boldsymbol{v}-\boldsymbol{w})\cdot \boldsymbol{\phi}\, ds\bigg|
		&\leq 
		C\int_{\Gamma_b}(\delta+|\boldsymbol{v}|+|\boldsymbol{w}|)^{s-2}\, |\boldsymbol{v}-\boldsymbol{w}|\, |\boldsymbol{\phi}|\, ds
		\\
		&\leq
		C\int_{\Gamma_b}\qquad \delta^{s-2}\qquad \qquad |\boldsymbol{v}-\boldsymbol{w}|\, |\boldsymbol{\phi}|\, ds
		\\
		&\leq C\delta^{s-2}\|\boldsymbol{v}-\boldsymbol{w}\|_{L^2(\Gamma_b)}\|\boldsymbol{\phi}\|_{L^2(\Gamma_b)}.
		\end{align*}
		We infer with the trace operator, \cite[Theorem 1.12]{Hinze2009}, and $C_1\in \mathbb{R}$
		\begin{align*}
		\|\boldsymbol{v}-\boldsymbol{w}\|_{L^2(\Gamma_b)}\|\boldsymbol{\phi}\|_{L^2(\Gamma_b)}\leq C_1 \|\boldsymbol{v}-\boldsymbol{w}\|_{V_2}\|\boldsymbol{\phi}\|_{V_2}.
		\end{align*}
		The same arguments are valid for the first summand of the operator $A$ with $\Omega$ instead of $\Gamma_b$, $B$ instead of $\tau$, and $s=p$. Thus, we obtain Lipschitz continuity with $C_2\in \mathbb{R}$ and
		\begin{align*}
		\sup_{\boldsymbol{\phi}\in V_2}|\langle A\boldsymbol{v}-A\boldsymbol{w},\boldsymbol{\phi}\rangle_{V_2^*,V_2}|\leq C_2 \|\boldsymbol{v}-\boldsymbol{w}\|_{V_2}\|\boldsymbol{\phi}\|_{V_2}.
		\end{align*}
	\end{proof}
	\begin{lemma}\label{coerz}
		Let $|\Gamma_d|>0$, $\delta>0$, $\mu_0>0$, and $p\in(1,2)$. The operator $A$ is coercive.
	\end{lemma}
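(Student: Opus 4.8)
The plan is to discard the two shear-type contributions, which are nonnegative, and to extract coercivity entirely from the added diffusion term $\mu_0(\nabla\boldsymbol{v},\nabla\boldsymbol{v})$. Concretely, I would evaluate $\langle A\boldsymbol{v},\boldsymbol{v}\rangle_{V_2^*,V_2}$ summand by summand, show that each of the first two summands is $\ge 0$, and let the quadratic diffusion term dominate $\|\boldsymbol{v}\|_{V_2}$.

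For the viscous summand, note that $S^p(D\boldsymbol{v})=\big(|D\boldsymbol{v}|^2+\delta^2\big)^{(p-2)/2}D\boldsymbol{v}$ is a scalar multiple of the symmetric matrix $D\boldsymbol{v}$, hence symmetric; contracting a symmetric matrix with the full Jacobian $\nabla\boldsymbol{v}$ gives the same value as contracting it with the symmetric part $D\boldsymbol{v}$, so
\[
S^p(D\boldsymbol{v}):\nabla\boldsymbol{v}=S^p(D\boldsymbol{v}):D\boldsymbol{v}=\big(|D\boldsymbol{v}|^2+\delta^2\big)^{(p-2)/2}|D\boldsymbol{v}|^2\ge 0 .
\]
Since $B\ge c_0>0$, the first summand $\int_\Omega B\,S^p(D\boldsymbol{v}):\nabla\boldsymbol{v}\,dx$ is nonnegative. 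The same computation gives $S^s(\boldsymbol{v})\cdot\boldsymbol{v}=\big(|\boldsymbol{v}|^2+\delta^2\big)^{(s-2)/2}|\boldsymbol{v}|^2\ge 0$, and with $\tau\ge 0$ the boundary summand $\int_{\Gamma_b}\tau\,S^s(\boldsymbol{v})\cdot\boldsymbol{v}\,ds$ is nonnegative as well. Therefore
\[
\langle A\boldsymbol{v},\boldsymbol{v}\rangle_{V_2^*,V_2}\ \ge\ \mu_0(\nabla\boldsymbol{v},\nabla\boldsymbol{v})=\mu_0\|\boldsymbol{v}\|_{V_2}^2 ,
\]
where I used that by Definition~\ref{Wp} the $W_2$-norm equals $\|\nabla\boldsymbol{v}\|_{L^2(\Omega)}$ and that $V_2\subseteq W_2$ carries the same norm. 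Dividing by $\|\boldsymbol{v}\|_{V_2}$ yields $\langle A\boldsymbol{v},\boldsymbol{v}\rangle_{V_2^*,V_2}/\|\boldsymbol{v}\|_{V_2}\ge\mu_0\|\boldsymbol{v}\|_{V_2}\to\infty$ as $\|\boldsymbol{v}\|_{V_2}\to\infty$, which is the claimed coercivity.

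I do not expect any genuine obstacle here. The only step that deserves an explicit word is the symmetry observation that rewrites $S^p(D\boldsymbol{v}):\nabla\boldsymbol{v}$ as the manifestly nonnegative quantity $\big(|D\boldsymbol{v}|^2+\delta^2\big)^{(p-2)/2}|D\boldsymbol{v}|^2$; everything else follows directly from $B\ge c_0>0$, $\tau\ge 0$, and $\mu_0>0$. It is worth remarking that $\delta>0$ plays no role in this particular lemma — the nonnegativity survives even for $\delta=0$ — so it is really the hypothesis $\mu_0>0$ that provides coercivity.
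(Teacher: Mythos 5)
Your proposal is correct and follows essentially the same route as the paper: both discard the nonnegative viscous and boundary summands (the paper implicitly using $S^p(D\boldsymbol{v}):\nabla\boldsymbol{v}=S^p(D\boldsymbol{v}):D\boldsymbol{v}$, which you spell out via symmetry) and derive coercivity solely from $\mu_0\|\boldsymbol{v}\|_{V_2}^2$. Your closing remark that $\delta>0$ is not needed here is accurate and consistent with the paper's argument.
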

	\begin{proof}
		We know
		\begin{align*}
		\int_{\Omega}B(|D\boldsymbol{v}|^2+\delta^2)^{(p-2)/2}|D\boldsymbol{v}|^2\, dx+\int_{\Gamma_b}\tau(|\boldsymbol{v}|^2+\delta^2)^{(s-2)/2}\boldsymbol{v}\cdot \boldsymbol{v}\, ds \geq 0.
		\end{align*}
		Trivially, we have
		\begin{align*}
		\frac{\mu_0 \|\boldsymbol{v}\|_{V_2}^2}{\|\boldsymbol{v}\|_{V_2}}=\mu_0\|\boldsymbol{v}\|_{V_2}\to \infty \text{ for }\|\boldsymbol{v}\|_{V_2}\to \infty.
		\end{align*}
	\end{proof}
	\begin{lemma}\label{StrictMonotone}
		Let $|\Gamma_d|>0$, $\delta>0$, $\mu_0>0$, and $p\in(1,2)$. The operator $A$ is strictly monotone.
	\end{lemma}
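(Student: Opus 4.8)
The plan is to pair the difference operator with $\boldsymbol{v}-\boldsymbol{w}$ and split $\langle A\boldsymbol{v}-A\boldsymbol{w},\boldsymbol{v}-\boldsymbol{w}\rangle_{V_2^*,V_2}$ into its three constituents: the bulk $p$-term, the sliding $s$-term on $\Gamma_b$, and the diffusion term $\mu_0(\nabla\,\cdot\,,\nabla\,\cdot\,)$. The strategy is to show that the first two are nonnegative and the third is strictly positive whenever $\boldsymbol{v}\neq\boldsymbol{w}$, so that the $\mu_0$-term supplies the strictness.

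First I would treat the bulk term. Since $D\boldsymbol{v}$ and $D\boldsymbol{w}$ are symmetric and $S^p$ maps symmetric matrices to symmetric matrices, $S^p(D\boldsymbol{v})-S^p(D\boldsymbol{w})$ is symmetric, so by the identity $(\ref{Transpose})$ we may replace $\nabla(\boldsymbol{v}-\boldsymbol{w})$ by its symmetric part $D(\boldsymbol{v}-\boldsymbol{w})$ in the double scalar product. Lemma $\ref{LipschitzEstimate}$ shows that $S^p$ satisfies the hypotheses $(\ref{LowerBound})$ and $(\ref{UpperBound})$ of Lemma $\ref{PDeltaConditions}$, whence pointwise a.e.
\[
\big(S^p(D\boldsymbol{v})-S^p(D\boldsymbol{w})\big):\big(D\boldsymbol{v}-D\boldsymbol{w}\big)\ \geq\ c\,(\delta+|D\boldsymbol{v}|+|D\boldsymbol{w}|)^{p-2}\,|D\boldsymbol{v}-D\boldsymbol{w}|^2\ \geq\ 0 .
\]
Multiplying by $B\geq c_0>0$ and integrating over $\Omega$ shows that this contribution is $\geq 0$. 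Running the same argument on $\Gamma_b$ with $S^s$, the factor $\tau\geq 0$, and Lemma $\ref{PDeltaConditions}$ again gives $\int_{\Gamma_b}\tau\big(S^s(\boldsymbol{v})-S^s(\boldsymbol{w})\big)\cdot(\boldsymbol{v}-\boldsymbol{w})\, ds\geq 0$.

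It remains to handle the diffusion term. By the definition of the norm in Definition $\ref{Wp}$ (with $p=2$) we have $\mu_0\big(\nabla(\boldsymbol{v}-\boldsymbol{w}),\nabla(\boldsymbol{v}-\boldsymbol{w})\big)=\mu_0\|\boldsymbol{v}-\boldsymbol{w}\|_{V_2}^2$, which is strictly positive as soon as $\boldsymbol{v}\neq\boldsymbol{w}$, because $\|\cdot\|_{V_2}$ is a genuine norm on $V_2$ (this uses $|\Gamma_d|>0$ together with the Poincar\'{e} inequality recalled after Definition $\ref{Wp}$). Adding the three contributions yields $\langle A\boldsymbol{v}-A\boldsymbol{w},\boldsymbol{v}-\boldsymbol{w}\rangle_{V_2^*,V_2}>0$ for all $\boldsymbol{v}\neq\boldsymbol{w}$.

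I do not expect a genuine difficulty here. The one point to keep in mind is that the monotonicity bound of Lemma $\ref{PDeltaConditions}$ only delivers nonnegativity for the $p$- and $s$-terms: it controls $|D\boldsymbol{v}-D\boldsymbol{w}|$ in the bulk and $|\boldsymbol{v}-\boldsymbol{w}|$ on $\Gamma_b$, but neither of these alone dominates $\|\boldsymbol{v}-\boldsymbol{w}\|_{V_2}$, so the strictness of the monotonicity cannot be read off from them and must come from the $\mu_0$-term --- which is precisely why the assumption $\mu_0>0$ appears in the statement.
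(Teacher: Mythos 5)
Your argument is correct and follows essentially the same route as the paper: nonnegativity of the bulk and boundary terms via the monotonicity estimate of Lemma $\ref{PDeltaConditions}$ (verified for $S^p$, $S^s$ in Lemma $\ref{LipschitzEstimate}$), with strictness supplied by the $\mu_0$-diffusion term and the fact that $\|\cdot\|_{V_2}$ is a norm. Your remarks on replacing $\nabla(\boldsymbol{v}-\boldsymbol{w})$ by its symmetric part and on why strictness cannot come from the nonlinear terms alone are details the paper leaves implicit, but they do not change the argument.
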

	\begin{proof}
		The strict monotonicity of $S^p$ and $S^s$, proved in Lemma $\ref{LipschitzEstimate}$, yield monotonicity of $A$: Let $\boldsymbol{v},\boldsymbol{w}\in V_2$. We have
		\begin{align*}
		&\langle A\boldsymbol{v}-A\boldsymbol{w},\boldsymbol{v}-\boldsymbol{w}\rangle_{V_2^*,V_2}
		\\
		=&\int_{\Omega}B\big(S^p(D\boldsymbol{v})-S^p(D\boldsymbol{w})\big):(\nabla \boldsymbol{v}-\nabla \boldsymbol{w})\, dx+\int_{\Gamma_b}\tau \big(S^s(\boldsymbol{v})-S^s(\boldsymbol{w})\big)\cdot (\boldsymbol{v}-\boldsymbol{w})\, ds
		\\
		\geq& \int_{\Omega}Bc(\delta+|D\boldsymbol{v}|+|D\boldsymbol{w}|)^{p-2}|D\boldsymbol{v}-D\boldsymbol{w}|^2\, dx+\int_{\Gamma_b}\tau (\delta+|\boldsymbol{v}|+|\boldsymbol{w}|)^{p-2}|\boldsymbol{v}-\boldsymbol{w}|\, ds\geq 0.
		\end{align*}
		The operator $A$ is strict monotone because
		\begin{align*}
		\mu_0 (\boldsymbol{v}-\boldsymbol{w},\boldsymbol{v}-\boldsymbol{w})=\mu_0\|\boldsymbol{v}-\boldsymbol{w}\|_{V_2}^2.
		\end{align*}
	\end{proof}
	Hence, we can prove our existence result:
	\begin{theorem}\label{UniqueSolution}
		Let $|\Gamma_d|>0$, $\delta>0$, $\mu_0>0$, and $p\in(1,2)$. There exists exactly one weak solution of the divergence-free regularized $p$-Stokes equations, see Definition $(\ref{WeakSolutionDef})$.
	\end{theorem}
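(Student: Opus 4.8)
The plan is to invoke the Browder--Minty Theorem $\ref{BrowderMinty}$ with $X=V_2$, the operator $A:V_2\to V_2^*$ from $(\ref{OperatorA})$, and the functional $f\in V_2^*$ defined by $\langle f,\boldsymbol{\phi}\rangle_{V_2^*,V_2}=\int_{\Omega}\rho \boldsymbol{g}\cdot \boldsymbol{\phi}\, dx$. A solution of $A\boldsymbol{v}=f$ in $V_2^*$ is precisely a weak solution in the sense of Definition $\ref{WeakSolutionDef}$, so the theorem follows once the hypotheses of Theorem $\ref{BrowderMinty}$ are checked.

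First I would confirm that $V_2$ is a reflexive, separable Banach space. It is a closed subspace of the separable Hilbert space $W^{1,2}(\Omega)^N$ --- closed because the trace map and the divergence operator are continuous, so the conditions $\boldsymbol{v}|_{\Gamma_d}=0$, $\boldsymbol{v}|_{\Gamma_b}\cdot \boldsymbol{n}=0$ and $\mathrm{div}\,\boldsymbol{v}=0$ cut out a closed set --- and closed subspaces of reflexive separable spaces inherit both properties. The Poincar\'{e} inequality recalled after Definition $\ref{Wp}$ makes $\|\cdot\|_{W_2}$, hence $\|\cdot\|_{V_2}$, equivalent to the $W^{1,2}(\Omega)^N$-norm, so completeness is not lost. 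Next I would check $f\in V_2^*$: assuming $\rho \boldsymbol{g}\in L^2(\Omega)^N$, the continuous embedding $V_2\hookrightarrow L^2(\Omega)^N$ gives $|\langle f,\boldsymbol{\phi}\rangle_{V_2^*,V_2}|\leq \|\rho \boldsymbol{g}\|_{L^2(\Omega)^N}\|\boldsymbol{\phi}\|_{L^2(\Omega)^N}\leq C\|\boldsymbol{\phi}\|_{V_2}$. I would also record that for $p\in(1,2)$ and $\Omega$ bounded one has $W^{1,2}(\Omega)^N\hookrightarrow W^{1,p}(\Omega)^N$, hence $V_2\subseteq V_p$, so that the term $\langle A\boldsymbol{v},\boldsymbol{\phi}\rangle_{V_p^*,V_p}$ appearing in $(\ref{OperatorA})$ is well defined for $\boldsymbol{v},\boldsymbol{\phi}\in V_2$ (Lemma $\ref{LqS}$ ensures $S^p(D\boldsymbol{v})\in L^{p'}(\Omega)^{N\times N}$, so the integral is finite, and likewise $S^s(\boldsymbol{v})$ pairs with $\boldsymbol{\phi}$ on $\Gamma_b$ via the trace).

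It then remains only to assemble the three structural properties of $A$ already established: Lemma $\ref{Lipschitz}$ gives Lipschitz continuity of $A$, in particular continuity; Lemma $\ref{coerz}$ gives coercivity; and Lemma $\ref{StrictMonotone}$ gives strict monotonicity. Applying Theorem $\ref{BrowderMinty}$ then yields a unique $\boldsymbol{v}\in V_2$ with $\langle A\boldsymbol{v},\boldsymbol{\phi}\rangle_{V_2^*,V_2}=\int_{\Omega}\rho \boldsymbol{g}\cdot \boldsymbol{\phi}\, dx$ for all $\boldsymbol{\phi}\in V_2$, which is exactly the assertion; uniqueness is in any case immediate from strict monotonicity.

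There is no real obstacle left: all the analytic content sits in Lemmata $\ref{Lipschitz}$, $\ref{coerz}$, $\ref{StrictMonotone}$ and in the monotone-operator machinery of Theorem $\ref{BrowderMinty}$, and this final step is purely a matter of verifying that the functional-analytic frame ($V_2$ reflexive and separable, $f\in V_2^*$, $V_2\subseteq V_p$) is in place before citing the theorem. The single point that deserves a sentence of care is the inclusion $V_2\subseteq V_p$, so that the two pieces of the operator in $(\ref{OperatorA})$ live on a common space; after that the result is immediate.
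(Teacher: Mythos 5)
Your proposal is correct and follows essentially the same route as the paper: the paper's proof likewise combines Lemma $\ref{Lipschitz}$ (continuity), Lemma $\ref{coerz}$ (coercivity), and Lemma $\ref{StrictMonotone}$ (strict monotonicity) and then applies the Browder--Minty Theorem $\ref{BrowderMinty}$ on $V_2$. Your additional remarks verifying that $V_2$ is reflexive and separable, that $f\in V_2^*$, and that $V_2\subseteq V_p$ are sound preliminaries that the paper leaves implicit.
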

	\begin{proof}
		(In \cite{Hirn2013} the case $\Gamma_d=\partial \Omega$ is analyzed.) Lemma $\ref{Lipschitz}$ yields Lipschitz continuity of the operator $A$ and thus continuity. We verified the coercivity of $A$ in Lemma $\ref{coerz}$.  The operator $A$ is also strictly monotone, see Lemma $\ref{StrictMonotone}$. We conclude the existence and uniqueness of a solution with the Browder-Minty Theorem, see Theorem $\ref{BrowderMinty}$.
	\end{proof}
	\section{Differentiability}\label{DiffG}
	We calculate the G\^{a}teaux derivative of the operator $A$ to apply Newton's method. At first, we only consider the first summand of the operator $A$. 
	\begin{definition}[Root problem]
		Let $\delta>0$, $\mu_0>0$, $B\in L^{\infty}(\Omega)$, and $\tau \in L^{\infty}(\Gamma_b)$. We define the operator $G:V_2\to V_2^*$ for all $\boldsymbol{\phi} \in V_2$ by
		\begin{align}
		\langle G(\boldsymbol{v}),\boldsymbol{\phi})\rangle_{V_2^*,V_2} &= \langle A\boldsymbol{v},\boldsymbol{\phi}\rangle_{V_2^*,V_2}-(\rho \boldsymbol{g},\boldsymbol{\phi}) \nonumber
		\\
		&=\int_{\Omega}BS^p(D\boldsymbol{v}):\nabla \boldsymbol{\phi}\, dx+\int_{\Gamma_b}\tau S^s(\boldsymbol{v})\cdot \boldsymbol{\phi}\, ds+\mu_0(\nabla \boldsymbol{v},\boldsymbol{\phi})-(\rho \boldsymbol{g},\boldsymbol{\phi}) \label{Gright}
		\end{align}
	\end{definition}
	The operator $G$ is G\^{a}teaux differentiable:
	\begin{theorem}\label{continuity}
		Let $\delta>0$, $\mu_0>0$, $B\in L^{\infty}(\Omega)$, $\tau \in L^{\infty}(\Gamma_b)$, and $\boldsymbol{v},\boldsymbol{w}\in V_2$. The directional derivative has the form
		\begin{align}\label{Derivative}
		\begin{split}
		&\quad\langle G'(\boldsymbol{v})\boldsymbol{w},\boldsymbol{\phi} \rangle_{V_2^*,V_2}
		\\
		&=\int_{\Omega}(p-2)B\big(|D\boldsymbol{v}|^2
		+\delta^2\big)^{(p-4)/2}(D\boldsymbol{v}:D\boldsymbol{w})\, (D\boldsymbol{v}:\nabla \boldsymbol{\phi}) \, dx
		\\
		&\quad+\int_{\Omega}B\big(|D\boldsymbol{v}|^2+\delta^2\big)^{(p-2)/2}D\boldsymbol{w}:\nabla \boldsymbol{\phi} \, dx
		\\
		&\quad+\int_{\Gamma_b}(s-2)\tau \big(|\boldsymbol{v}|^2+\delta^2\big)^{(s-4)/2}(\boldsymbol{v}\cdot \boldsymbol{w})\, (\boldsymbol{v}\cdot \boldsymbol{\phi})\, ds
		\\
		&\quad+\int_{\Gamma_b}\tau \big(|\boldsymbol{v}|^2+\delta^2\big)^{(s-2)/2}\boldsymbol{w}\cdot \boldsymbol{\phi} \, ds
		+\mu_0\int_{\Omega}\nabla \boldsymbol{w}:\nabla \boldsymbol{\phi} \, dx\text{, }\quad \boldsymbol{\phi} \in V_2.
		\end{split}
		\end{align}
		Furthermore, the operator $G:V_2\to V_2^*$ is G\^{a}teaux differentiable. 
	\end{theorem}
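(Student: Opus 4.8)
The plan is to differentiate $G$ term by term. In \eqref{Gright} the functional $\langle G(\boldsymbol{v}),\boldsymbol{\phi}\rangle_{V_2^*,V_2}$ is the sum of the two nonlinear terms $\int_\Omega BS^p(D\boldsymbol{v}):\nabla\boldsymbol{\phi}\,dx$ and $\int_{\Gamma_b}\tau S^s(\boldsymbol{v})\cdot\boldsymbol{\phi}\,ds$, the linear term $\mu_0(\nabla\boldsymbol{v},\nabla\boldsymbol{\phi})$, and the constant term $-(\rho\boldsymbol{g},\boldsymbol{\phi})$. The last two are affine in $\boldsymbol{v}$, so they contribute $\mu_0\int_\Omega\nabla\boldsymbol{w}:\nabla\boldsymbol{\phi}\,dx$ and $0$ to the directional derivative, with nothing to prove. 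The two nonlinear terms are structurally identical — domain $\Omega$ versus boundary $\Gamma_b$, coefficient $B$ versus $\tau$, exponent $p$ versus $s$, argument $D\boldsymbol{v}$ versus the trace $\boldsymbol{v}|_{\Gamma_b}$ — so I would carry out the computation for the $S^p$-term in full and observe that the $S^s$-term is verbatim the same with the trace operator in place of the gradient.

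For the $S^p$-term, fix $\boldsymbol{v},\boldsymbol{w},\boldsymbol{\phi}\in V_2$ and $t\neq 0$. Since $S^p$ is continuously differentiable by Lemma~\ref{LipschitzEstimate}, the fundamental theorem of calculus gives pointwise
\begin{align*}
\frac1t\int_\Omega B\big(S^p(D\boldsymbol{v}+tD\boldsymbol{w})-S^p(D\boldsymbol{v})\big):\nabla\boldsymbol{\phi}\,dx
&=\int_\Omega B\Big(\int_0^1 (S^p)'(D\boldsymbol{v}+\theta tD\boldsymbol{w})[D\boldsymbol{w}]\,d\theta\Big):\nabla\boldsymbol{\phi}\,dx,
\end{align*}
where $(S^p)'(P)[Q]$ denotes $\sum_{k,\ell}\tfrac{\partial S^p_{ij}}{\partial P_{k\ell}}(P)Q_{k\ell}$, known explicitly from \eqref{DerivativeSr}. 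I would then pass to the limit $t\to0$ by dominated convergence. For a.e.\ $x\in\Omega$ the matrices $D\boldsymbol{v}(x)+\theta tD\boldsymbol{w}(x)$ converge to $D\boldsymbol{v}(x)$ uniformly in $\theta\in[0,1]$ as $t\to0$, so by continuity of $(S^p)'$ the inner integrand converges pointwise to $(S^p)'(D\boldsymbol{v}(x))[D\boldsymbol{w}(x)]$. For the majorant I would use the upper bound \eqref{UpperBound}: because $\delta>0$ and $p-2<0$ one has $(|P|+\delta)^{p-2}\le\delta^{p-2}$ for every $P$, hence $|(S^p)'(D\boldsymbol{v}+\theta tD\boldsymbol{w})[D\boldsymbol{w}]|\le C\delta^{p-2}|D\boldsymbol{w}|$ uniformly in $\theta$ and $t$, and the integrand is dominated by $\|B\|_{L^\infty(\Omega)}C\delta^{p-2}\,|D\boldsymbol{w}|\,|\nabla\boldsymbol{\phi}|\in L^1(\Omega)$ by Cauchy--Schwarz, using $\boldsymbol{w},\boldsymbol{\phi}\in V_2\subseteq W^{1,2}(\Omega)^N$. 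Dominated convergence (in $\theta$ and then in $x$, or once after Fubini) then shows the difference quotient tends to $\int_\Omega B\,(S^p)'(D\boldsymbol{v})[D\boldsymbol{w}]:\nabla\boldsymbol{\phi}\,dx$; substituting \eqref{DerivativeSr} with $P=D\boldsymbol{v}$, $Q=D\boldsymbol{w}$ reproduces the first two integrals in \eqref{Derivative}. The identical argument on $\Gamma_b$ — with $\delta^{s-2}$ in place of $\delta^{p-2}$, $S^s$ read as a map on $\mathbb{R}^N$, and the majorant $\|\tau\|_{L^\infty(\Gamma_b)}C\delta^{s-2}|\boldsymbol{w}|\,|\boldsymbol{\phi}|\in L^1(\Gamma_b)$, integrable by the trace theorem as in Lemma~\ref{Lipschitz} — produces the third and fourth integrals.

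Finally, I would verify that \eqref{Derivative} defines a genuine Gâteaux derivative, i.e.\ that $\boldsymbol{w}\mapsto G'(\boldsymbol{v})\boldsymbol{w}$ is a bounded linear map $V_2\to V_2^*$: linearity in $\boldsymbol{w}$ is immediate from the formula, and boundedness follows from exactly the estimates already used — $(|D\boldsymbol{v}|^2+\delta^2)^{(p-2)/2}\le\delta^{p-2}$ and $(|D\boldsymbol{v}|^2+\delta^2)^{(p-4)/2}|D\boldsymbol{v}|^2\le\delta^{p-2}$ for the $\Omega$-integrals, the corresponding bounds with $s$ on $\Gamma_b$, the trace theorem, and Cauchy--Schwarz — just as in the proof of Lemma~\ref{Lipschitz}, giving $\|G'(\boldsymbol{v})\boldsymbol{w}\|_{V_2^*}\le C(\boldsymbol{v})\|\boldsymbol{w}\|_{V_2}$. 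I expect the only delicate point to be the existence of an integrable majorant for the difference quotient: if $\delta$ were $0$ the factor $(|P|+\delta)^{p-2}$ would be unbounded where $D\boldsymbol{v}$ vanishes and no such majorant would be available, so the argument genuinely rests on $\delta>0$, consistent with the role of the regularization stressed in the introduction; the boundary term requires only the minor extra care of routing its integrability through the trace operator, which is already handled in Lemma~\ref{Lipschitz}.
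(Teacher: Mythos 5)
Your proposal is correct and follows essentially the same route as the paper: pointwise differentiation of $S^p$ via the explicit derivative \eqref{DerivativeSr}, a dominated-convergence argument with the majorant $C\delta^{p-2}\|B\|_{L^{\infty}(\Omega)}|D\boldsymbol{w}|\,|\nabla\boldsymbol{\phi}|$ made available by $\delta>0$, the same treatment of the boundary term through the trace theorem, and linearity plus boundedness of $\boldsymbol{w}\mapsto G'(\boldsymbol{v})\boldsymbol{w}$ to conclude G\^{a}teaux differentiability. The only cosmetic difference is that you represent the difference quotient by the fundamental theorem of calculus and bound it with \eqref{UpperBound}, whereas the paper uses a Taylor expansion for the pointwise limit and the Lipschitz estimate of Lemma \ref{LipschitzEstimate} for the majorant; both rest on the same derivative bounds.
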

	\begin{proof}
		First, we consider the first summand on the right-hand side of equation $(\ref{Gright})$.
		
		The $o$-notation is the limit to zero in the following calculations. We prove pointwise convergence. For all $i,j \in \{1,...,N\}$, we calculate the Taylor expansion of $S^p$ in $D\boldsymbol{v}$ with the continuous derivative, see equation $(\ref{DerivativeSr})$. This yields almost everywhere
		\begin{align*}
		S^p_{ij}(D\boldsymbol{v}+tD\boldsymbol{w})
		&=S^p_{ij}(D\boldsymbol{v})+\sum_{k,\ell=1}^N(p-2)\big(|D\boldsymbol{v}|^2+\delta^2\big)^{(p-4)/2}(D\boldsymbol{v})_{ij}(D\boldsymbol{v})_{k\ell}t(D\boldsymbol{w})_{k\ell}
		\\
		&\quad  +\sum_{k,\ell=1}^N\big(|D\boldsymbol{v}|^2+\delta^2\big)^{(p-2)/2}I_{ik}I_{j\ell}t(D\boldsymbol{w})_{k\ell}
		+o(|tD\boldsymbol{w}|)
		\\
		&=S^p_{ij}(D\boldsymbol{v})+t(p-2)\big(|D\boldsymbol{v}|^2+\delta^2\big)^{(p-4)/2}(D\boldsymbol{v})_{ij}D\boldsymbol{v}:D\boldsymbol{w}
		\\
		&\quad+(|D\boldsymbol{v}|^2+\delta^2)^{(p-2)/2}t(D\boldsymbol{w})_{ij}+o(|tD\boldsymbol{w}|).
		\end{align*}
		This implies
		\begin{align*}
		&\quad \lim_{t \to 0}\frac{B}{t}\big(S^p_{ij}(D\boldsymbol{v}+tD\boldsymbol{w})-S^p_{ij}(D\boldsymbol{v})\big)
		\\
		&=(p-2)B(|D\boldsymbol{v}|^2+\delta^2)^{(p-4)/2}(D\boldsymbol{v})_{ij}D\boldsymbol{v}:D\boldsymbol{w}+B(|D\boldsymbol{v}|^2+\delta^2)^{(p-2)/2}(D\boldsymbol{w})_{ij}
		\end{align*}
		for all $i,j\in \{1,...,N\}$ almost everywhere. We obtained the pointwise convergence. To apply the dominated convergence theorem, we calculate a majorant. Using the Lipschitz continuity of $S^p$, see Lemma $\ref{LipschitzEstimate}$, and $C\in \mathbb{R}$, we conclude
		\begin{align}\label{Bound}
		&\quad \bigg|\frac{B}{t}\Big(S^p(D(\boldsymbol{v}+t\boldsymbol{w}))-S^p(D\boldsymbol{v})\Big):\nabla \boldsymbol{\phi}\bigg| \nonumber
		\\
		&\leq C \frac{|B|}{t}(\delta+|D(\boldsymbol{v}+t\boldsymbol{w})|+|D\boldsymbol{v}|)^{p-2}|D(\boldsymbol{v}+t\boldsymbol{w})-D\boldsymbol{v}|\,|\nabla \boldsymbol{\phi}| \nonumber
		\\
		&\leq C\delta^{p-2}\|B\|_{L^{\infty}(\Omega)}|D\boldsymbol{w}|\, |\nabla \boldsymbol{\phi}|.
		\end{align}
		The majorant is integrable as
		\begin{align*}
		\int_{\Omega} C\delta^{p-2}\|B\|_{L^{\infty}(\Omega)}|D\boldsymbol{w}|\, |\nabla \boldsymbol{\phi}|\, dx \leq C\delta^{p-2}\|B\|_{L^{\infty}(\Omega)}\|\boldsymbol{w}\|_{V_2}\|\boldsymbol{\phi}\|_{V_2}.
		\end{align*}
		The directional derivative for the second summand on the right-hand side of equation $(\ref{Gright})$ and the boundedness follow in the same ways if we use as the integration area $\Gamma_b$ instead of $\Omega$ and $V,\Phi$ with $V_{ij}:=I_{ij}v_j$ and $\Phi_{ij}:=I_{ij}\phi_j$ instead of $D\boldsymbol{v}$ and $D\boldsymbol{\phi}$. With this notation, we relate the vector-valued expressions to the matrix-valued expressions. The last summand in equation $(\ref{Derivative})$ follows trivially due to the linearity of $\boldsymbol{v}\mapsto \mu_0(\nabla \boldsymbol{v},\nabla \boldsymbol{\phi})$, $\boldsymbol{\phi}\in V_2$.
		
		Thus, $\boldsymbol{w}\mapsto G'(\boldsymbol{v};\boldsymbol{w})$ is a bounded linear operator. Hence, $G$ is G\^{a}teaux differentiable. 
	\end{proof}
	The operator $G$ has only a directional derivative on $V_2$ not on $V_p$:
	\begin{remark}\label{Differentiable}
		Let $\delta>0$. The second summand of $G'(\boldsymbol{v};\boldsymbol{w})$ in Theorem $\ref{continuity}$ is not well-defined for all $\boldsymbol{w},\boldsymbol{\phi}\in V_p$: Set $B\equiv 1$, $\boldsymbol{v}\equiv 0$. Then we have for all $\boldsymbol{w},\boldsymbol{\phi}\in V_2$
		\begin{align*} \int_{\Omega}B\big(|D\boldsymbol{v}|^2+\delta^2\big)^{(p-2)/2}D\boldsymbol{w}:\nabla \boldsymbol{\phi} \, dx
		=\int_{\Omega}\delta^{p-2}D\boldsymbol{w}:\nabla \boldsymbol{\phi}\, dx.
		\end{align*}
		Thus, the integral is not defined for suitable $\boldsymbol{w},\boldsymbol{\phi}\in V_p$.
	\end{remark}
	\section{Infinite-dimensional Newton's Method}\label{NewtonMethod} In this section, we state Newton's method in infinite dimensions and prove that we can calculate the Newton iterations. To our knowledge, this result is new as the G\^{a}teaux derivative exists only in all directions for $\mu_0>0$ and the combination of Newton's method and $\mu_0>0$ was not considered before. Newton's method is:
	
	Choose $\boldsymbol{v_0}\in V_2$ sufficiently close to the solution $\boldsymbol{\overline{v}}\in V_2$ of $G(\boldsymbol{\overline{v}})=0$.
	\newline
	For $k=0,1,2,...:$
	\newline
	Obtain $\boldsymbol{w_k}$ by solving
	\begin{align}\label{NewtonAlg}
	G'(\boldsymbol{v_k})\boldsymbol{w_k}=-G(\boldsymbol{v_k}),
	\end{align}
	and set $\boldsymbol{v_{k+1}}:=\boldsymbol{v_k}+\boldsymbol{w_k}$.
	
	The problem to calculate a Newton step, see equation $(\ref{NewtonAlg})$, is linear because we handle the linear problem $(\boldsymbol{w},\boldsymbol{\phi})\mapsto \langle G'(\boldsymbol{v})\boldsymbol{w},\boldsymbol{\phi}\rangle_{V_2^*,V_2}$ for all $\boldsymbol{w},\boldsymbol{\phi} \in V_2$. Hence, we can apply Lax-Milgram's Lemma. 
	\begin{lemma}\label{RegCondProved}
		Let $\mu_0>0$, $\delta>0$, and $\boldsymbol{v}\in V_2$. There exists exactly one solution $\boldsymbol{w}=\boldsymbol{w}(\boldsymbol{v})\in V_2$ such that equation $(\ref{NewtonAlg})$ is fulfilled. Moreover, we have
		\begin{align}\label{RegCond}
		\|G'(\boldsymbol{v})^{-1}\|_{\mathcal{L}(V_2,V_2^*)}\leq 1/\mu_0.
		\end{align}
	\end{lemma}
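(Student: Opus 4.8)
The plan is to apply the Lax-Milgram Lemma to the bilinear form
$a_{\boldsymbol{v}}(\boldsymbol{w},\boldsymbol{\phi}):=\langle G'(\boldsymbol{v})\boldsymbol{w},\boldsymbol{\phi}\rangle_{V_2^*,V_2}$ on the Hilbert space $V_2$ (a closed subspace of $W^{1,2}(\Omega)^N$), with right-hand side $-G(\boldsymbol{v})\in V_2^*$, which is a legitimate element of the dual since $G\colon V_2\to V_2^*$. The explicit formula for $a_{\boldsymbol{v}}$ is the one displayed in Theorem $\ref{continuity}$, equation $(\ref{Derivative})$, so everything reduces to checking that $a_{\boldsymbol{v}}$ is bounded and coercive on $V_2\times V_2$.

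For boundedness I would estimate the five summands of $(\ref{Derivative})$ separately. In the first summand, $|D\boldsymbol{v}:D\boldsymbol{w}|\le|D\boldsymbol{v}|\,|D\boldsymbol{w}|$ and $|D\boldsymbol{v}:\nabla\boldsymbol{\phi}|\le|D\boldsymbol{v}|\,|\nabla\boldsymbol{\phi}|$ combine with the elementary estimate $(|D\boldsymbol{v}|^2+\delta^2)^{(p-4)/2}|D\boldsymbol{v}|^2\le(|D\boldsymbol{v}|^2+\delta^2)^{(p-2)/2}\le\delta^{p-2}$ (valid because $p-2<0$) to yield a bound by $C\delta^{p-2}\|B\|_{L^\infty(\Omega)}\|\boldsymbol{w}\|_{V_2}\|\boldsymbol{\phi}\|_{V_2}$; the second summand is bounded in the same way and more directly. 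The third and fourth summands are handled identically on $\Gamma_b$, with $s$ in place of $p$, $\tau$ in place of $B$, and the trace theorem \cite[Theorem 1.12]{Hinze2009} to pass from $L^2(\Gamma_b)$ to $V_2$ — exactly the computation already carried out in the proof of Lemma $\ref{Lipschitz}$. The last summand is $\mu_0(\nabla\boldsymbol{w},\nabla\boldsymbol{\phi})$, bounded by $\mu_0\|\boldsymbol{w}\|_{V_2}\|\boldsymbol{\phi}\|_{V_2}$. Summing gives boundedness of $a_{\boldsymbol{v}}$.

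For coercivity I would put $\boldsymbol{\phi}=\boldsymbol{w}$. By the derivative formula $(\ref{DerivativeSr})$, the first two summands of $(\ref{Derivative})$ are precisely $\int_{\Omega}B\sum_{i,j,k,\ell}\frac{\partial S^p_{k\ell}}{\partial P_{ij}}(D\boldsymbol{v})\,(D\boldsymbol{w})_{ij}(D\boldsymbol{w})_{k\ell}\,dx$, which by the lower bound $(\ref{LowerBound})$ of Lemma $\ref{LipschitzEstimate}$ and $B\ge c_0>0$ is bounded below by $\int_{\Omega}Bc_1(|D\boldsymbol{v}|+\delta)^{p-2}|D\boldsymbol{w}|^2\,dx\ge 0$. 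Likewise the third and fourth summands reassemble $\int_{\Gamma_b}\tau\sum\frac{\partial S^s_{k\ell}}{\partial P_{ij}}(\boldsymbol{v})\,w_{ij}w_{k\ell}\,ds\ge 0$, using the vector-valued form of $(\ref{LowerBound})$ and $\tau\ge 0$. Hence every non-$\mu_0$ contribution is nonnegative, and $a_{\boldsymbol{v}}(\boldsymbol{w},\boldsymbol{w})\ge\mu_0(\nabla\boldsymbol{w},\nabla\boldsymbol{w})=\mu_0\|\boldsymbol{w}\|_{V_2}^2$. Lax-Milgram then produces a unique $\boldsymbol{w}=\boldsymbol{w}(\boldsymbol{v})\in V_2$ solving $(\ref{NewtonAlg})$, together with the standard estimate $\|\boldsymbol{w}\|_{V_2}\le\mu_0^{-1}\|G(\boldsymbol{v})\|_{V_2^*}$, which is exactly the operator bound $(\ref{RegCond})$.

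The main obstacle is bookkeeping rather than a deep point: one must check that the negative exponent $(p-4)/2$ in the first summand does not destroy boundedness — this is precisely where the strict positivity $\delta>0$ enters — and one must recognize that the first two summands of $a_{\boldsymbol{v}}$ recombine into the pointwise quadratic form $\partial S^p(D\boldsymbol{v})[D\boldsymbol{w},D\boldsymbol{w}]$ so that Lemma $\ref{LipschitzEstimate}$ applies with $B\ge c_0>0$; the genuine coercivity constant is then supplied entirely by the regularizing term $\mu_0$.
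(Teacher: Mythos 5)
Your proposal is correct and follows essentially the same route as the paper: Lax--Milgram on $V_2$, boundedness from the $\delta^{p-2}$, $\delta^{s-2}$ bounds together with the trace theorem, and coercivity with constant $\mu_0$ after showing the non-$\mu_0$ terms are nonnegative. The only cosmetic difference is that you obtain the nonnegativity by citing the lower bound $(\ref{LowerBound})$ from Lemma $\ref{LipschitzEstimate}$, whereas the paper redoes the same Cauchy--Schwarz computation inline (using $D\boldsymbol{w}:\nabla\boldsymbol{w}=D\boldsymbol{w}:D\boldsymbol{w}$), so the underlying argument is identical.
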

	\begin{proof}
		Let $\boldsymbol{v}\in V_2$. We show the continuity of $(\boldsymbol{w},\boldsymbol{\phi})\mapsto \langle G'(\boldsymbol{v})\boldsymbol{w},\boldsymbol{\phi}\rangle_{V_2^*,V_2}$ for all $\boldsymbol{w},\boldsymbol{\phi}\in V_2$. Since the arguments in the calculation of relation $(\ref{Bound})$ are also valid for the boundary term, we find $\tilde{C}\in \mathbb{R}$ with the trace operator, see \cite[Theorem 1.12]{Hinze2009}, such that
		\begin{align*}
		|\langle G'(\boldsymbol{v})\boldsymbol{w},\boldsymbol{\phi}\rangle_{V_2^*,V_2}|
		\leq \tilde{C}(\delta^{s-2}\|\tau\|_{L^{\infty}(\Gamma_b)}+\delta^{p-2}\|B\|_{L^{\infty}(\Omega)}+\mu_0) \|\boldsymbol{w}\|_{V_2} \|\boldsymbol{\phi}\|_{V_2}.
		\end{align*}
		We show the coercivity next. Let $\boldsymbol{w}\in V_2$. The last summand of the directional derivative of $G'$, see equation $(\ref{Derivative})$, is just
		\begin{align*}
		\mu_0(\nabla \boldsymbol{w},\nabla \boldsymbol{w})= \mu_0\|\boldsymbol{w}\|_{V_2}.
		\end{align*}
		We use $D\boldsymbol{w}:\nabla \boldsymbol{v}=D\boldsymbol{w}:D\boldsymbol{v}$ for arbitrary $\boldsymbol{v},\boldsymbol{w}\in V_2$, see equation $(\ref{Transpose})$, to conclude
		\begin{align*}
		&\quad\int_{\Omega}B\big(|D\boldsymbol{v}|^2+\delta^2\big)^{(p-2)/2}D\boldsymbol{w}:\nabla \boldsymbol{w}\, dx
		\\
		&\quad+\int_{\Omega}(p-2)B\big(|D\boldsymbol{v}|^2+\delta^2\big)^{(p-4)/2}(D\boldsymbol{v}:D\boldsymbol{w})\, (D\boldsymbol{v}:\nabla \boldsymbol{w)}\, dx
		\\
		&=
		\int_{\Omega}B\big(|D\boldsymbol{v}|^2+\delta^2\big)^{(p-2)/2}\bigg(|D\boldsymbol{w}|^2+(p-2)\frac{(D\boldsymbol{v}:D\boldsymbol{w})^2}{|D\boldsymbol{v}|^2+\delta^2}\bigg)\, dx
		\\
		&\geq
		\int_{\Omega}B\big(|D\boldsymbol{v}|^2+\delta^2\big)^{(p-2)/2}\bigg(|D\boldsymbol{w}|^2+(p-2)\frac{|D\boldsymbol{v}|^2\, |D\boldsymbol{w}|^2}{|D\boldsymbol{v}|^2}\bigg)\, dx
		\\
		&=
		\int_{\Omega}B\big(|D\boldsymbol{v}|^2+\delta^2\big)^{(p-2)/2}(p-1)|D\boldsymbol{w}|^2\, dx\geq 0.
		\end{align*}
		The same arguments for the boundary term yield
		\begin{align*}
		\int_{\Gamma_b}\tau\big(|\boldsymbol{v}|^2+\delta^2\big)^{(s-2)/2}|\boldsymbol{w}|^2+(s-2)\tau\big(|\boldsymbol{v}|^2+\delta^2\big)^{(s-4)/2}(\boldsymbol{v}\cdot \boldsymbol{w})\, (\boldsymbol{v}\cdot \boldsymbol{w}) \, ds \geq 0.
		\end{align*}
		Hence, the conditions for applying Lax-Milgram's Lemma are validated, and a unique solution for each Newton step, see equation $(\ref{NewtonAlg})$, exists. The coercivity constant is $\mu_0$. This implies the uniform bound
		\begin{align*} 
		\|(G'(\boldsymbol{v}))^{-1}\|_{\mathcal{L}(V_2,V_2^*)}\leq 1/\mu_0.
		\end{align*}	
	\end{proof}
	\section{Globalized Newton method}\label{GlobalSuperLinear}
	In this section, we prove global convergence of Newton's method with a step size control. To our knowledge, an analysis of Newton's method with a step size control was only performed in finite dimensions; for details of this analysis, see, e.g., \cite{Hirn2013}. Note that our minimization functional $J_{\mu_0,\delta}$ is not two times continuously differentiable as the second derivative is only a G\^{a}teaux derivative. From the applied perspective, the usage of approximations of exact step sizes, see Definition $\ref{ExactDefinition}$, could be interesting as they perform well in our numerical examples, see chapter $\ref{Experiments}$.
	
	To prove the convergence, we use the convex functional $J_{\mu_0,\delta}$ introduced in section $\ref{EquivalentProblem}$ for a step size control. We verify the equivalence of minimizing the convex functional and finding a weak solution of the $p$-Stokes equations:
	\begin{lemma}\label{AsMinimization}
		Let $\mu_0,\delta \in [0,\infty)$. Then, we have
		\begin{align*}
		J_{\mu_0,\delta}'(\boldsymbol{v})=0\Leftrightarrow J_{\mu_0,\delta}(\boldsymbol{v})=\min_{\boldsymbol{u}\in V_r}J_{\mu_0,\delta}(\boldsymbol{u}).
		\end{align*}
		with $r=2$ for $\mu_0>0$ and $r=p$ for $\mu_0=0$.
	\end{lemma}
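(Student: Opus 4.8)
The plan is to invoke the textbook first-order optimality characterization: for a convex, Fr\'{e}chet-differentiable functional on a linear space the critical points coincide with the global minimizers. Two preliminary observations carry the argument. First, $V_r$ is a \emph{linear} subspace, since vanishing of the Dirichlet trace on $\Gamma_d$, vanishing of the normal trace on $\Gamma_b$, and vanishing of the divergence are all linear conditions; hence $\boldsymbol{v}+t\boldsymbol{w}\in V_r$ for all $\boldsymbol{v},\boldsymbol{w}\in V_r$ and $t\in\mathbb{R}$, so the one-dimensional restrictions used below are legitimate. Second, $J_{\mu_0,\delta}$ is convex on $V_r$: the scalar function $h(\varsigma):=(\varsigma^2+\delta^2)^{r/2}$ has $h'\geq0$ and $h''(\varsigma)=r(\varsigma^2+\delta^2)^{r/2-2}\big(\delta^2+(r-1)\varsigma^2\big)\geq0$ on $[0,\infty)$ for $r>1$, so $h$ is convex and nondecreasing there, whence $P\mapsto h(|P|)$ is convex on $\mathbb{R}^{N\times N}$ as the composition of a nondecreasing convex function with a norm; composing with the linear maps $\boldsymbol{v}\mapsto D\boldsymbol{v}$ and $\boldsymbol{v}\mapsto\boldsymbol{v}|_{\Gamma_b}$, integrating against the nonnegative weights $B/p$ and $\tau/s$, and adding the convex term $\mu_0(\nabla\boldsymbol{v},\nabla\boldsymbol{v})$ and the affine term $-(\rho\boldsymbol{g},\boldsymbol{v})$, one concludes that $J_{\mu_0,\delta}$ is convex (indeed strictly convex when $\mu_0>0$ because of the $\mu_0$-term, although only convexity is needed here).

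For the reverse implication ($\Leftarrow$) I would argue without convexity: if $\boldsymbol{v}$ minimizes $J_{\mu_0,\delta}$ over $V_r$, then for every $\boldsymbol{w}\in V_r$ the function $t\mapsto J_{\mu_0,\delta}(\boldsymbol{v}+t\boldsymbol{w})$ is well-defined on $\mathbb{R}$ and attains a minimum at $t=0$; its derivative there equals $J_{\mu_0,\delta}'(\boldsymbol{v})\boldsymbol{w}$ by the Fr\'{e}chet (hence G\^{a}teaux) differentiability established above, so it vanishes, and since $\boldsymbol{w}$ is arbitrary, $J_{\mu_0,\delta}'(\boldsymbol{v})=0$.

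For the forward implication ($\Rightarrow$) I would use convexity: assuming $J_{\mu_0,\delta}'(\boldsymbol{v})=0$ and taking any $\boldsymbol{u}\in V_r$, set $\varphi(t):=J_{\mu_0,\delta}\big(\boldsymbol{v}+t(\boldsymbol{u}-\boldsymbol{v})\big)$ for $t\in[0,1]$; then $\varphi$ is convex and differentiable with $\varphi'(0)=J_{\mu_0,\delta}'(\boldsymbol{v})(\boldsymbol{u}-\boldsymbol{v})=0$, so the supporting-line inequality for convex functions of one variable gives $J_{\mu_0,\delta}(\boldsymbol{u})=\varphi(1)\geq\varphi(0)+\varphi'(0)=J_{\mu_0,\delta}(\boldsymbol{v})$, i.e.\ $\boldsymbol{v}$ is a global minimizer over $V_r$. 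I expect the only nonroutine point to be the convexity verification for the shear-thinning integrand $P\mapsto(|P|^2+\delta^2)^{r/2}$, together with the remark that $V_r$ is a vector space so that the scalar reductions make sense; the remainder is the standard optimality-condition machinery.
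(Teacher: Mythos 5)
Your proof is correct, and it reaches the conclusion by a more self-contained route than the paper. The paper's proof has the same skeleton (convexity of $J_{\mu_0,\delta}$ plus the standard fact that for convex differentiable functionals the first-order condition characterizes global minimizers), but it obtains convexity by a case distinction with external ingredients: strict convexity for $(\mu_0,\delta)=(0,0)$ is cited from Chen et al., the case $\mu_0=0$, $\delta>0$ is cited from Hirn, and the case $\mu_0,\delta>0$ is handled by invoking the positive definiteness of $J_{\mu_0,\delta}''=G'$ established in Lemma \ref{RegCondProved} via the Lax--Milgram coercivity estimate; the final equivalence is then stated in one line as the sufficiency of the first-order condition for (strictly) convex functions. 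You instead verify convexity directly and uniformly in all parameter cases by the elementary scalar computation $h(\varsigma)=(\varsigma^2+\delta^2)^{r/2}$, $h'\geq 0$, $h''\geq 0$ for $r>1$, composition of a nondecreasing convex function with the norm, linearity of $\boldsymbol{v}\mapsto D\boldsymbol{v}$ and of the trace, nonnegative weights $B/p$ and $\tau/s$, and you then write out both directions of the optimality characterization explicitly (noting correctly that the minimizer-implies-critical-point direction needs no convexity at all, only that $V_r$ is a linear space and $J_{\mu_0,\delta}$ is differentiable, which the paper's earlier unnumbered lemma supplies). What your route buys is independence from the second-derivative machinery and from the literature citations, and it covers $\delta=0$ and $\mu_0=0$ without special cases; what the paper's route buys is strict convexity (hence uniqueness of the minimizer), which is not needed for the stated equivalence but is consistent with how the functional is used elsewhere. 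Your parenthetical remark that the $\mu_0$-term gives strict convexity on $V_2$ is also correct, since $\mu_0(\nabla\boldsymbol{v},\nabla\boldsymbol{v})=\mu_0\|\boldsymbol{v}\|_{V_2}^2$ with the norm of Definition \ref{Wp}.
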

	\begin{proof}
		The strict convexity for $(\mu_0,\delta)=(0,0)$ was proved in \cite{Chen2013} on $V_p$. Thus, the strict convexity follows for $\mu_0>0$ on the smaller space $V_2$. Thus, the case $\delta>0$ remains.	We verified in Lemma \ref{RegCondProved} positive definitness of $J''_{\mu_0,\delta}=G'$ for $(\mu_0,\delta)\in (0,\infty)^2$. This implies strict convexity. The case $\mu_0=0$ and $\delta>0$ is considered in \cite{Hirn2013}.
		
		Because the necessary first-order optimality condition is sufficient for strict convex functions, the claim follows.
	\end{proof}
	\subsection{Step size controls}
	In this subsection, we remind of step size controls. Step size controls guarantee convergence under certain conditions by setting $\boldsymbol{v_{k+1}}:=\boldsymbol{v_k}+\alpha_k \boldsymbol{w_k}$ with $\alpha_k\in (0,\infty)$ instead of $\boldsymbol{v_{k+1}}:=\boldsymbol{v_k}+\boldsymbol{w_k}$, see the begin of chapter $\ref{NewtonMethod}$. The choice of $\alpha_k$ should be such that the new functional value $J_{\mu_0,\delta}(\boldsymbol{v_k}+\alpha_k\boldsymbol{w_k})$ reduces compared to the old functional value $J_{\mu_0,\delta}(\boldsymbol{v_k})$ and a small reduction already implies convergence, see Lemma $\ref{GlobalConvergence}$ condition $c)$ for the mathematical details. A commonly used step size control that guarantees these properties under some conditions on $J_{\mu_0,\delta}$ is the Armijo step size, see for example \cite[section $2.2.1.1$]{Hinze2009}:
	\begin{definition}[Armijo step sizes]\label{ArmijoDefinition}
		Let $J_{\mu_0,\delta}:V_2\to \mathbb{R}$ be continuously differentiable, $\gamma \in (0,1)$, $\boldsymbol{v_k}\in V_2$ the point, and $\boldsymbol{w_k}\in V_2$ the direction. Determine the biggest $\alpha_k\in \{1,1/2,1/2^2,...\}$ such that
		\begin{align}\label{Armijo}
		J_{\mu_0,\delta}(\boldsymbol{v_k}+\alpha_k\boldsymbol{w_k})-J_{\mu_0,\delta}(\boldsymbol{v_k})\leq \alpha_k \gamma J'_{\mu_0,\delta}(\boldsymbol{v_k})\boldsymbol{w_k}.
		\end{align}
	\end{definition}
	For convex functions, we can try to find $\alpha_k$ that minimizes $J_{\mu_0,\delta}(\boldsymbol{v_k}+\alpha_k\boldsymbol{w_k})$:
	\begin{definition}[Exact step sizes]\label{ExactDefinition}
		Let $J_{\mu_0,\delta}:V_2\to \mathbb{R}$ be continuously differentiable, $\boldsymbol{v_k}\in V_2$ the point, and $\boldsymbol{w_k}\in V_2$ the direction. Determine $\alpha_k$ with
		\begin{align*}
		J_{\mu_0,\delta}(\boldsymbol{v_k}+\alpha_k \boldsymbol{w_k})=\min_{\alpha\in  [0,\infty)}J_{\mu_0,\delta}(\boldsymbol{v_k}+\alpha \boldsymbol{w_k}).
		\end{align*}
	\end{definition}
	In our case, we can only approximately solve this problem, which we discuss in the numerical experiment.
	\subsection{Global convergence}In this subsection, we verify global convergence. 
	We obtain global convergence by employing a step size control. The step sizes are calculated by reducing the functional $J_{\mu_0,\delta}$, see Definition $\ref{ConvexFunction}$, in each iteration. We verify the conditions for the following convergence result:
	\begin{lemma}\label{GlobalConvergence}
		Let $J_{\mu_0,\delta}:V_2\to \mathbb{R}$ be continuously Fr\'{e}chet differentiable, bounded from below, $\boldsymbol{v_0}\in V_2$, $\boldsymbol{v_{k+1}}:=\boldsymbol{v_k}+\alpha_k\boldsymbol{w_k}$ with the direction $\boldsymbol{w_k}\in V_2$, and the step sizes $\alpha_k \in (0,\infty)$. We additionally assume that we have
		\begin{itemize}
			\item[a)] descent directions: $J'_{\mu_0,\delta}(\boldsymbol{v_k})\boldsymbol{w_k}<0$,
			\vspace{\baselineskip}
			\item[b)] the angle condition: $\eta \|J'_{\mu_0,\delta}(\boldsymbol{v_k})\|_{V_2^*}\|\boldsymbol{w_k}\|_{V_2}\leq - J_{\mu_0,\delta}'(\boldsymbol{v_k})\boldsymbol{w_k}\quad$ independently of $k$ for fixed $\eta \in(0,1)$,
			\vspace{\baselineskip}
			\item[c)] large enough step sizes:
			$
			\left\{\begin{array}{c}
			J_{\mu_0,\delta}(\boldsymbol{v_k}+\alpha_k\boldsymbol{w_k})<J_{\mu_0,\delta}(\boldsymbol{v_k})\quad \text{for all }k
			\\
			\text{and }J_{\mu_0,\delta}(\boldsymbol{v_k}+\alpha_k\boldsymbol{w_k})-J_{\mu_0,\delta}(\boldsymbol{v_k})\to 0\quad \text{for }k \to \infty
			\end{array}\right\}
			\\
			\phantom{space}
			\\
			\text{imply } \frac{J_{\mu_0,\delta}'(\boldsymbol{v_k})\boldsymbol{w_k}}{\|\boldsymbol{w_k}\|_{V_2}}\to 0 \text{ for }k\to \infty.$
		\end{itemize}
		Then, we have
		\begin{align*}
		J_{\mu_0,\delta}'(\boldsymbol{v_k})\to 0\quad \text{for }k\to \infty.
		\end{align*}
	\end{lemma}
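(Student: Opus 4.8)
The plan is to argue by contradiction, extracting from a putative non-vanishing subsequence of $J'_{\mu_0,\delta}(\boldsymbol{v_k})$ a contradiction with the three hypotheses (a)--(c). Concretely, suppose $J'_{\mu_0,\delta}(\boldsymbol{v_k})\not\to 0$. Then there is $\varepsilon>0$ and a subsequence (not relabeled) with $\|J'_{\mu_0,\delta}(\boldsymbol{v_k})\|_{V_2^*}\geq \varepsilon$ for all $k$. First I would note that the sequence $(J_{\mu_0,\delta}(\boldsymbol{v_k}))_k$ is well-behaved: by hypothesis (a) the directions are descent directions, and $\alpha_k>0$, so by (c)'s first premise $J_{\mu_0,\delta}(\boldsymbol{v_{k+1}})<J_{\mu_0,\delta}(\boldsymbol{v_k})$ — wait, more carefully, I would first need to check that the step sizes actually produce decrease; here I invoke that the step size controls in Definitions \ref{ArmijoDefinition} and \ref{ExactDefinition} give $J_{\mu_0,\delta}(\boldsymbol{v_k}+\alpha_k\boldsymbol{w_k})\leq J_{\mu_0,\delta}(\boldsymbol{v_k})$, hence $(J_{\mu_0,\delta}(\boldsymbol{v_k}))_k$ is monotonically decreasing. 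Since $J_{\mu_0,\delta}$ is bounded below, this sequence converges, so in particular $J_{\mu_0,\delta}(\boldsymbol{v_{k+1}})-J_{\mu_0,\delta}(\boldsymbol{v_k})\to 0$.

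Next I would feed this into hypothesis (c): both premises of (c) now hold (strict decrease for all $k$, and the differences tend to $0$), so the conclusion of (c) gives
\begin{align*}
\frac{J_{\mu_0,\delta}'(\boldsymbol{v_k})\boldsymbol{w_k}}{\|\boldsymbol{w_k}\|_{V_2}}\to 0\qquad\text{as }k\to\infty.
\end{align*}
On the other hand, hypothesis (b) — the angle condition — rearranges to
\begin{align*}
-\frac{J_{\mu_0,\delta}'(\boldsymbol{v_k})\boldsymbol{w_k}}{\|\boldsymbol{w_k}\|_{V_2}}\geq \eta\,\|J_{\mu_0,\delta}'(\boldsymbol{v_k})\|_{V_2^*}\geq \eta\varepsilon>0
\end{align*}
for every $k$ in our subsequence, which contradicts the displayed limit. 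Hence the assumption $J'_{\mu_0,\delta}(\boldsymbol{v_k})\not\to 0$ is untenable, and the claim follows.

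The only delicate point is the very first step — establishing that the iterates form a monotonically decreasing sequence of functional values so that boundedness below forces convergence of $J_{\mu_0,\delta}(\boldsymbol{v_k})$ and hence $J_{\mu_0,\delta}(\boldsymbol{v_{k+1}})-J_{\mu_0,\delta}(\boldsymbol{v_k})\to 0$. Strictly speaking, the lemma as stated takes the step sizes $\alpha_k$ as given in $(0,\infty)$ and already builds the decrease into premise (c); so the argument is really just: the monotone bounded sequence $(J_{\mu_0,\delta}(\boldsymbol{v_k}))_k$ converges, activating (c), whose conclusion clashes with (b) unless $\|J_{\mu_0,\delta}'(\boldsymbol{v_k})\|_{V_2^*}\to 0$. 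In other words the proof is a short combinatorial chaining of the three hypotheses, with the "hard part" being purely bookkeeping: making sure premise (c)'s two conditions are genuinely satisfied by the iterates before one is allowed to use its conclusion. No use of the specific structure of $J_{\mu_0,\delta}$ (convexity, the $p$-Stokes form, differentiability beyond $C^1$) is needed here — that structure enters only later, when verifying (a), (b), (c) for the actual Newton iteration.
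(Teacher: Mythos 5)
Your argument is, in substance, the standard descent-method convergence proof, and that is precisely what the paper relies on: its own ``proof'' of this lemma is the bare citation ``See \cite{Hinze2009}'', so you have reconstructed the argument the paper outsources. The correct chain is the one you end with: monotone decrease of $(J_{\mu_0,\delta}(\boldsymbol{v_k}))_k$ plus boundedness from below gives convergence of the functional values, hence $J_{\mu_0,\delta}(\boldsymbol{v_{k+1}})-J_{\mu_0,\delta}(\boldsymbol{v_k})\to 0$; condition c) then yields $J'_{\mu_0,\delta}(\boldsymbol{v_k})\boldsymbol{w_k}/\|\boldsymbol{w_k}\|_{V_2}\to 0$; and the angle condition b) converts this into $\|J'_{\mu_0,\delta}(\boldsymbol{v_k})\|_{V_2^*}\leq -\eta^{-1}J'_{\mu_0,\delta}(\boldsymbol{v_k})\boldsymbol{w_k}/\|\boldsymbol{w_k}\|_{V_2}\to 0$. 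Note that this chain proves the conclusion directly, so the contradiction/subsequence wrapper adds nothing.

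The one point to clean up is where the monotone decrease comes from. You cannot obtain it ``from c)'s first premise'' (the premise of an implication is not an assertion), and you should not invoke Definitions \ref{ArmijoDefinition} or \ref{ExactDefinition} here: this is an abstract lemma about arbitrary directions $\boldsymbol{w_k}$ and step sizes $\alpha_k\in(0,\infty)$, and the specific step-size rules are only shown to satisfy its hypotheses later (Lemma \ref{Admissible}). The intended reading --- the one in \cite{Hinze2009}, where admissible step sizes are by definition required to satisfy both $J_{\mu_0,\delta}(\boldsymbol{v_{k+1}})<J_{\mu_0,\delta}(\boldsymbol{v_k})$ and the implication --- is that the decrease is part of the standing assumptions on the iterates; as literally printed the lemma leaves it implicit, and without it condition c) is vacuous and the conclusion can fail. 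So state the decrease as an assumption rather than attempting to derive it; with that reading, the argument in your closing paragraph is exactly the proof in the cited source.
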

	\begin{proof}
		See \cite{Hinze2009}.
	\end{proof}
	The first two statements are easy to verify: 
	\begin{lemma}
		Let $\mu_0>0$, $\delta>0$. Newton steps fulfill the angle condition for the functional $J_{\mu_0,\delta}$, see Definition $\ref{ConvexFunction}$, and are descent directions.
	\end{lemma}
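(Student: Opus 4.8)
The plan is to obtain both properties as immediate consequences of the two estimates for the linear operator $G'(\boldsymbol{v})$ established in the proof of Lemma~\ref{RegCondProved}: the coercivity lower bound $\langle G'(\boldsymbol{v})\boldsymbol{w},\boldsymbol{w}\rangle_{V_2^*,V_2}\ge\mu_0\|\boldsymbol{w}\|_{V_2}^2$ and the boundedness upper bound $\|G'(\boldsymbol{v})\|_{\mathcal{L}(V_2,V_2^*)}\le M$ with
\[
M:=\tilde C\bigl(\delta^{s-2}\|\tau\|_{L^{\infty}(\Gamma_b)}+\delta^{p-2}\|B\|_{L^{\infty}(\Omega)}+\mu_0\bigr),
\]
both of which hold with constants independent of $\boldsymbol{v}$, hence independent of $k$. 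I will use throughout that $J'_{\mu_0,\delta}(\boldsymbol{v})\boldsymbol{w}=\langle G(\boldsymbol{v}),\boldsymbol{w}\rangle_{V_2^*,V_2}$ (with $J''_{\mu_0,\delta}(\boldsymbol{v})=G'(\boldsymbol{v})$), and that the Newton step $\boldsymbol{w_k}$ of~$(\ref{NewtonAlg})$ satisfies $G'(\boldsymbol{v_k})\boldsymbol{w_k}=-G(\boldsymbol{v_k})$, so that, as elements of $V_2^*$,
\[
J'_{\mu_0,\delta}(\boldsymbol{v_k})=G(\boldsymbol{v_k})=-G'(\boldsymbol{v_k})\boldsymbol{w_k}.
\]
If $\boldsymbol{w_k}=0$, then $G(\boldsymbol{v_k})=0$ and $\boldsymbol{v_k}$ already solves the equations, so in what follows I may assume $\boldsymbol{w_k}\neq0$.

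For the descent property, I test the Newton equation against $\boldsymbol{w_k}$ itself and apply coercivity:
\[
J'_{\mu_0,\delta}(\boldsymbol{v_k})\boldsymbol{w_k}=\langle G(\boldsymbol{v_k}),\boldsymbol{w_k}\rangle_{V_2^*,V_2}=-\langle G'(\boldsymbol{v_k})\boldsymbol{w_k},\boldsymbol{w_k}\rangle_{V_2^*,V_2}\le-\mu_0\|\boldsymbol{w_k}\|_{V_2}^2<0.
\]
This gives condition~$a)$ of Lemma~\ref{GlobalConvergence}, and it also records the quantitative bound $-J'_{\mu_0,\delta}(\boldsymbol{v_k})\boldsymbol{w_k}\ge\mu_0\|\boldsymbol{w_k}\|_{V_2}^2$ that I need next.

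For the angle condition, I combine the identity $J'_{\mu_0,\delta}(\boldsymbol{v_k})=-G'(\boldsymbol{v_k})\boldsymbol{w_k}$ in $V_2^*$ with the uniform operator bound:
\[
\|J'_{\mu_0,\delta}(\boldsymbol{v_k})\|_{V_2^*}=\|G'(\boldsymbol{v_k})\boldsymbol{w_k}\|_{V_2^*}\le M\,\|\boldsymbol{w_k}\|_{V_2}.
\]
Choosing $\eta:=\mu_0/(M+\mu_0)\in(0,1)$, so that $\eta M\le\mu_0$, I conclude
\[
\eta\,\|J'_{\mu_0,\delta}(\boldsymbol{v_k})\|_{V_2^*}\|\boldsymbol{w_k}\|_{V_2}\le\eta M\|\boldsymbol{w_k}\|_{V_2}^2\le\mu_0\|\boldsymbol{w_k}\|_{V_2}^2\le-J'_{\mu_0,\delta}(\boldsymbol{v_k})\boldsymbol{w_k},
\]
which is condition~$b)$ of Lemma~\ref{GlobalConvergence} with a $k$-independent $\eta$.

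I do not expect a genuine obstacle: the argument hinges only on $G'(\boldsymbol{v})$ having a coercivity constant bounded below and an operator norm bounded above, both uniformly in $\boldsymbol{v}$, and these are exactly what the regularization provides — $\mu_0>0$ keeps the coercivity constant from degenerating, while $\delta>0$ keeps $\|G'(\boldsymbol{v})\|_{\mathcal{L}(V_2,V_2^*)}$ finite and, crucially, bounded independently of $\boldsymbol{v}$ (recall Remark~\ref{Differentiable}). The only points needing a line of care are the trivial case $\boldsymbol{w_k}=0$ and the explicit $k$-independent choice of $\eta$; one may equally take $\eta=\mu_0/M$ after noting $M>\mu_0$, since $\delta^{p-2}\|B\|_{L^{\infty}(\Omega)}>0$ because $B\ge c_0>0$.
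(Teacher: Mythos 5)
Your proposal is correct and follows essentially the same route as the paper: it rests on the $k$-independent coercivity constant $\mu_0$ and continuity bound for $J''_{\mu_0,\delta}=G'$ from Lemma~\ref{RegCondProved}, yielding the angle condition with $\eta$ of order $\mu_0/C$ and hence the descent property. The only difference is that you write out explicitly the short computation that the paper delegates to the cited result \cite[Theorem 1.2]{Polyak2007}, including the harmless special case $\boldsymbol{w_k}=0$.
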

	\begin{proof}
		We calculated in Lemma $\ref{RegCondProved}$ the coercivity constant $\mu_0$ and the continuity constant $C$ independent of $k$ such that
		\begin{align}\label{BoundJprimeprime}
		\mu_0\leq \|J_{\mu_0,\delta}''(\boldsymbol{v_k})\|_{\mathcal{L}(V_2,V_2^*)}\leq C.
		\end{align}
		In \cite[Theorem 1.2]{Polyak2007}, the proof of the angle condition is done with $\eta\geq \mu_0/C$ for the finite-dimensional case. The proof for the infinite-dimensional case is identical. The angle condition implies that the Newton steps are descent directions.
	\end{proof}
	To verify large enough step sizes, we need the following lemma:
	\begin{lemma}\label{Monotonicity}
		Let $J_{\mu_0,\delta}'$ be uniformly continuous. Let the step sizes $(\alpha_k)_k$ be Armijo step sizes and let the direction $\boldsymbol{w_k}$ fulfill
		\begin{align*}
		\|\boldsymbol{w_k}\|_{V_2}\geq \varphi\bigg(\frac{-J_{\mu_0,\delta}'(\boldsymbol{v_k})\boldsymbol{w_k}}{\|\boldsymbol{w_k}\|_{V_2}}\bigg)
		\end{align*}
		with some $\varphi:[0,\infty)\to [0,\infty)$ monotonically increasing and satisfying $\varphi(t)>0$ for all $t>0$. Then the step sizes $(\alpha_k)_k$ are admissible.
	\end{lemma}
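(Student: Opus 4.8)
What must be shown is condition c) of Lemma $\ref{GlobalConvergence}$ for the Armijo step sizes: \emph{assuming} $J_{\mu_0,\delta}(\boldsymbol{v_k}+\alpha_k\boldsymbol{w_k})<J_{\mu_0,\delta}(\boldsymbol{v_k})$ for all $k$ and $J_{\mu_0,\delta}(\boldsymbol{v_k}+\alpha_k\boldsymbol{w_k})-J_{\mu_0,\delta}(\boldsymbol{v_k})\to 0$, I must derive $J_{\mu_0,\delta}'(\boldsymbol{v_k})\boldsymbol{w_k}/\|\boldsymbol{w_k}\|_{V_2}\to 0$. The plan is a contradiction argument whose engine is the word \emph{biggest} in the Armijo rule $(\ref{Armijo})$: if $\alpha_k<1$ then the doubled step $2\alpha_k\in\{1,1/2,\dots\}$ must violate $(\ref{Armijo})$. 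So suppose the claim fails; after passing to a subsequence there is $\varepsilon>0$ with $-J_{\mu_0,\delta}'(\boldsymbol{v_k})\boldsymbol{w_k}\geq\varepsilon\|\boldsymbol{w_k}\|_{V_2}$ for all $k$, and since $\varphi$ is monotonically increasing the growth hypothesis on $\boldsymbol{w_k}$ then forces $\|\boldsymbol{w_k}\|_{V_2}\geq\varphi(\varepsilon)>0$; in particular $\boldsymbol{w_k}\neq 0$.

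First I would show $\alpha_k\to 0$. Plugging $-J_{\mu_0,\delta}'(\boldsymbol{v_k})\boldsymbol{w_k}\geq\varepsilon\|\boldsymbol{w_k}\|_{V_2}$ into $(\ref{Armijo})$ gives $J_{\mu_0,\delta}(\boldsymbol{v_k})-J_{\mu_0,\delta}(\boldsymbol{v_k}+\alpha_k\boldsymbol{w_k})\geq\gamma\varepsilon\,\alpha_k\|\boldsymbol{w_k}\|_{V_2}$, and by the assumed convergence of the functional differences the left side tends to $0$, hence $\alpha_k\|\boldsymbol{w_k}\|_{V_2}\to 0$; dividing by $\|\boldsymbol{w_k}\|_{V_2}\geq\varphi(\varepsilon)>0$ yields $\alpha_k\to 0$. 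Thus $\alpha_k<1$ for $k$ large, so that $2\alpha_k$ violates $(\ref{Armijo})$: $J_{\mu_0,\delta}(\boldsymbol{v_k}+2\alpha_k\boldsymbol{w_k})-J_{\mu_0,\delta}(\boldsymbol{v_k})>2\alpha_k\gamma\,J_{\mu_0,\delta}'(\boldsymbol{v_k})\boldsymbol{w_k}$.

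Next I would linearize. Fr\'{e}chet differentiability of $J_{\mu_0,\delta}$ lets me write $J_{\mu_0,\delta}(\boldsymbol{v_k}+2\alpha_k\boldsymbol{w_k})-J_{\mu_0,\delta}(\boldsymbol{v_k})=2\alpha_k\int_0^1 J_{\mu_0,\delta}'(\boldsymbol{v_k}+2t\alpha_k\boldsymbol{w_k})\boldsymbol{w_k}\,dt$; cancelling $2\alpha_k>0$ in the violated inequality and subtracting $J_{\mu_0,\delta}'(\boldsymbol{v_k})\boldsymbol{w_k}$ on both sides yields $\int_0^1\big(J_{\mu_0,\delta}'(\boldsymbol{v_k}+2t\alpha_k\boldsymbol{w_k})-J_{\mu_0,\delta}'(\boldsymbol{v_k})\big)\boldsymbol{w_k}\,dt>(1-\gamma)\big(-J_{\mu_0,\delta}'(\boldsymbol{v_k})\boldsymbol{w_k}\big)\geq(1-\gamma)\varepsilon\|\boldsymbol{w_k}\|_{V_2}$. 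Estimating the left side by $\big(\sup_{t\in[0,1]}\|J_{\mu_0,\delta}'(\boldsymbol{v_k}+2t\alpha_k\boldsymbol{w_k})-J_{\mu_0,\delta}'(\boldsymbol{v_k})\|_{V_2^*}\big)\|\boldsymbol{w_k}\|_{V_2}$ and dividing by $\|\boldsymbol{w_k}\|_{V_2}>0$, I obtain $\sup_{t\in[0,1]}\|J_{\mu_0,\delta}'(\boldsymbol{v_k}+2t\alpha_k\boldsymbol{w_k})-J_{\mu_0,\delta}'(\boldsymbol{v_k})\|_{V_2^*}>(1-\gamma)\varepsilon$ for all large $k$. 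But $\|2t\alpha_k\boldsymbol{w_k}\|_{V_2}\leq 2\alpha_k\|\boldsymbol{w_k}\|_{V_2}\to 0$ uniformly in $t\in[0,1]$, so the assumed uniform continuity of $J_{\mu_0,\delta}'$ forces the left side to $0$, a contradiction.

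The point that needs care — really the only one — is the decoupling of the two limits above: $(\ref{Armijo})$ only controls the \emph{product} $\alpha_k\|\boldsymbol{w_k}\|_{V_2}$, whereas the maximality argument needs $\alpha_k\to 0$ on its own and the uniform-continuity argument needs the increment $\alpha_k\|\boldsymbol{w_k}\|_{V_2}\to 0$; it is exactly the lower bound $\|\boldsymbol{w_k}\|_{V_2}\geq\varphi(\varepsilon)>0$ furnished by the growth hypothesis on $\boldsymbol{w_k}$ that makes both hold simultaneously (and that also guarantees we may divide by $\|\boldsymbol{w_k}\|_{V_2}$). The remaining ingredients — well-definedness of the Armijo step for a descent direction, and the fundamental-theorem-of-calculus identity on $V_2$ — are routine.
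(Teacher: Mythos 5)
Your argument is correct: it is exactly the standard proof of this step-size admissibility result (contradiction via the maximality of the Armijo step, the lower bound $\|\boldsymbol{w_k}\|_{V_2}\geq\varphi(\varepsilon)$ to decouple $\alpha_k\to 0$ from $\alpha_k\|\boldsymbol{w_k}\|_{V_2}\to 0$, and uniform continuity of $J_{\mu_0,\delta}'$ applied to the doubled step). The paper itself gives no argument here but simply cites Lemma 2.3 of \cite{Hinze2009}, and your proposal is a faithful, complete reconstruction of that cited proof, so it takes essentially the same route.
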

	\begin{proof}
		See \cite{Hinze2009} Lemma $2.3$.
	\end{proof}
	\begin{lemma}\label{Admissible}
		Let $\mu_0>0$, $\delta>0$, and $|\Gamma_d|>0$. For Newton's method, the Armijo step sizes, see Definition $\ref{ArmijoDefinition}$, are admissible.
	\end{lemma}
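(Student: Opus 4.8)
The plan is to deduce the statement from Lemma \ref{Monotonicity}, so I need to supply its three hypotheses for the Newton direction: uniform continuity of $J'_{\mu_0,\delta}$, the lower bound $\|\boldsymbol{w_k}\|_{V_2}\ge\varphi(-J'_{\mu_0,\delta}(\boldsymbol{v_k})\boldsymbol{w_k}/\|\boldsymbol{w_k}\|_{V_2})$ for a suitable monotone $\varphi$, and then read off the conclusion.

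First I would note that $J'_{\mu_0,\delta}(\boldsymbol{v})=A\boldsymbol{v}-\rho\boldsymbol{g}$ in $V_2^*$ with $\boldsymbol{v}\mapsto\rho\boldsymbol{g}$ constant, so the Lipschitz continuity of $A:V_2\to V_2^*$ from Lemma \ref{Lipschitz} shows that $J'_{\mu_0,\delta}:V_2\to V_2^*$ is globally Lipschitz, hence uniformly continuous. Next, the Newton step $\boldsymbol{w_k}$ exists and is unique by Lemma \ref{RegCondProved} and solves $G'(\boldsymbol{v_k})\boldsymbol{w_k}=-G(\boldsymbol{v_k})=-J'_{\mu_0,\delta}(\boldsymbol{v_k})$; if $\boldsymbol{w_k}=0$ then $\boldsymbol{v_k}$ already solves $G(\boldsymbol{v_k})=0$ and there is nothing to prove, so assume $\boldsymbol{w_k}\neq0$. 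Pairing the Newton equation with $\boldsymbol{w_k}$ gives
\begin{align*}
-J'_{\mu_0,\delta}(\boldsymbol{v_k})\boldsymbol{w_k}=\langle G'(\boldsymbol{v_k})\boldsymbol{w_k},\boldsymbol{w_k}\rangle_{V_2^*,V_2}\leq C\|\boldsymbol{w_k}\|_{V_2}^2,
\end{align*}
where $C<\infty$ is the continuity constant from the proof of Lemma \ref{RegCondProved} (namely $C=\tilde C(\delta^{s-2}\|\tau\|_{L^{\infty}(\Gamma_b)}+\delta^{p-2}\|B\|_{L^{\infty}(\Omega)}+\mu_0)$), which is independent of $k$. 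Dividing by $\|\boldsymbol{w_k}\|_{V_2}$ yields
\begin{align*}
\|\boldsymbol{w_k}\|_{V_2}\geq\frac{1}{C}\,\frac{-J'_{\mu_0,\delta}(\boldsymbol{v_k})\boldsymbol{w_k}}{\|\boldsymbol{w_k}\|_{V_2}},
\end{align*}
so the hypothesis of Lemma \ref{Monotonicity} holds with the monotonically increasing function $\varphi(t)=t/C$, which is positive for $t>0$. Lemma \ref{Monotonicity} then gives that the Armijo step sizes are admissible, which is the claim.

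I expect no serious obstacle here; the step is bookkeeping, assembling estimates already proved. The two points to be careful about are: the useful inequality is the \emph{upper} (continuity) bound $\langle G'(\boldsymbol{v_k})\boldsymbol{w_k},\boldsymbol{w_k}\rangle\le C\|\boldsymbol{w_k}\|_{V_2}^2$ — the coercivity bound $\langle G'(\boldsymbol{v_k})\boldsymbol{w_k},\boldsymbol{w_k}\rangle\ge\mu_0\|\boldsymbol{w_k}\|_{V_2}^2$ points the wrong way, as it bounds $\|\boldsymbol{w_k}\|_{V_2}$ from above — and the constant $C$ must be verified to depend only on $\delta,\mu_0,\|B\|_{L^{\infty}(\Omega)},\|\tau\|_{L^{\infty}(\Gamma_b)}$ and not on $\boldsymbol{v_k}$, so that $\varphi$ can be chosen uniformly in $k$.
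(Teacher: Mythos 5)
Your proposal is correct and follows essentially the same route as the paper: verify uniform (Lipschitz) continuity of $J'_{\mu_0,\delta}$ via Lemma \ref{Lipschitz}, then use the uniform continuity bound on $G'(\boldsymbol{v_k})=J''_{\mu_0,\delta}(\boldsymbol{v_k})$ together with the Newton equation to get $\|\boldsymbol{w_k}\|_{V_2}\geq \frac{1}{C}\frac{-J'_{\mu_0,\delta}(\boldsymbol{v_k})\boldsymbol{w_k}}{\|\boldsymbol{w_k}\|_{V_2}}$, and apply Lemma \ref{Monotonicity} with $\varphi(t)=t/C$. Your remarks on using the upper (continuity) rather than the coercivity bound, and on the $k$-independence of $C$, match the paper's use of the right-hand inequality in $(\ref{BoundJprimeprime})$.
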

	\begin{proof}
		We want to apply Lemma $\ref{Monotonicity}$. We verify the conditions. The function $J'_{\mu_0,\delta}$ is Lipschitz continuous, because the operator $A$ is Lipschitz continuous, see Lemma $\ref{Lipschitz}$, and $(\boldsymbol{v},\boldsymbol{w})\mapsto \mu_0(\boldsymbol{v},\boldsymbol{w})$ is trivially Lipschitz continuous. Therefore, $J_{\mu_0,\delta}'$ is uniformly continuous. We already proved that the directions $\boldsymbol{w_k}$ are descent directions. It remains to show that the descent directions are not too short. We know with the Newton steps and the right inequality in relation $(\ref{BoundJprimeprime})$
		\begin{align*}
		&\quad C\|\boldsymbol{w_k}\|_{V_2}^2\geq \langle J_{\mu_0,\delta}''(\boldsymbol{v_k})\boldsymbol{w_k},\boldsymbol{w_k}\rangle_{V_2^*,V_2}=-\langle J_{\mu_0,\delta}'(\boldsymbol{v_k}),\boldsymbol{w_k}\rangle_{V_2^*,V_2}
		\\
		&\Leftrightarrow \|\boldsymbol{w_k}\|_{V_2}\geq \frac{1}{C}\frac{-\langle J_{\mu_0,\delta}'(\boldsymbol{v_k}),\boldsymbol{w_k}\rangle_{V_2^*,V_2}}{\|\boldsymbol{w_k}\|_{V_2}}.
		\end{align*}
		Therefore, the step size is bounded from below by the monotonically increasing function $\varphi:[0,\infty)\to [0,\infty)$, $\varphi(t)=t/C$ with $\varphi(t)>0$ for $t>0$. Hence, the step sizes are large enough, and we can apply Lemma $\ref{Monotonicity}$.
	\end{proof}
	\begin{corollary}
		Let $\mu_0>0$, $\delta>0$, and $|\Gamma_d|>0$. Newton's method with Armijo step sizes is globally convergent.
	\end{corollary}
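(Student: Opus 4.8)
The plan is to verify the hypotheses of Lemma~\ref{GlobalConvergence} for the functional $J_{\mu_0,\delta}$ on $V_2$ with the Newton directions and Armijo step sizes, and then to upgrade the resulting conclusion $J_{\mu_0,\delta}'(\boldsymbol{v_k})\to 0$ to norm convergence of the iterates towards the unique weak solution.

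First I would collect the ingredients. The functional $J_{\mu_0,\delta}$ is continuously Fr\'{e}chet differentiable by the lemma following Definition~\ref{ConvexFunction}, with $J_{\mu_0,\delta}'(\boldsymbol{v})\boldsymbol{w}=\langle A\boldsymbol{v},\boldsymbol{w}\rangle_{V_2^*,V_2}-(\rho\boldsymbol{g},\boldsymbol{w})$. It is bounded from below: the two integral summands in $J_{\mu_0,\delta}$ are nonnegative, $\mu_0(\nabla\boldsymbol{v},\nabla\boldsymbol{v})=\mu_0\|\boldsymbol{v}\|_{V_2}^2$, and $-(\rho\boldsymbol{g},\boldsymbol{v})\geq-\|\rho\boldsymbol{g}\|_{V_2^*}\|\boldsymbol{v}\|_{V_2}$, so $J_{\mu_0,\delta}(\boldsymbol{v})\geq\mu_0\|\boldsymbol{v}\|_{V_2}^2-\|\rho\boldsymbol{g}\|_{V_2^*}\|\boldsymbol{v}\|_{V_2}\geq-\|\rho\boldsymbol{g}\|_{V_2^*}^2/(4\mu_0)$. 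Conditions a) and b) of Lemma~\ref{GlobalConvergence} for the Newton directions $\boldsymbol{w_k}$ were established in the lemma on the angle condition (which gives the angle condition with $\eta\geq\mu_0/C$ and deduces that Newton steps are descent directions). Condition c) is exactly the statement that the Armijo step sizes are admissible, which is Lemma~\ref{Admissible}. Hence Lemma~\ref{GlobalConvergence} applies and yields $J_{\mu_0,\delta}'(\boldsymbol{v_k})\to 0$ in $V_2^*$ as $k\to\infty$.

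It then remains to deduce $\boldsymbol{v_k}\to\boldsymbol{\overline{v}}$, where $\boldsymbol{\overline{v}}$ is the unique weak solution of the regularized $p$-Stokes equations from Theorem~\ref{UniqueSolution}. Since $J_{\mu_0,\delta}'(\boldsymbol{\overline{v}})=0$, one has $J_{\mu_0,\delta}'(\boldsymbol{v_k})\boldsymbol{\phi}=\langle A\boldsymbol{v_k}-A\boldsymbol{\overline{v}},\boldsymbol{\phi}\rangle_{V_2^*,V_2}$ for every $\boldsymbol{\phi}\in V_2$. Choosing $\boldsymbol{\phi}=\boldsymbol{v_k}-\boldsymbol{\overline{v}}$ and invoking the strong monotonicity estimate from the proof of Lemma~\ref{StrictMonotone}, namely $\langle A\boldsymbol{v_k}-A\boldsymbol{\overline{v}},\boldsymbol{v_k}-\boldsymbol{\overline{v}}\rangle_{V_2^*,V_2}\geq\mu_0\|\boldsymbol{v_k}-\boldsymbol{\overline{v}}\|_{V_2}^2$ (coming from the $\mu_0(\nabla\cdot,\nabla\cdot)$ term together with the nonnegativity of the $S^p$ and $S^s$ contributions), together with $\langle A\boldsymbol{v_k}-A\boldsymbol{\overline{v}},\boldsymbol{v_k}-\boldsymbol{\overline{v}}\rangle_{V_2^*,V_2}\leq\|J_{\mu_0,\delta}'(\boldsymbol{v_k})\|_{V_2^*}\|\boldsymbol{v_k}-\boldsymbol{\overline{v}}\|_{V_2}$, gives $\|\boldsymbol{v_k}-\boldsymbol{\overline{v}}\|_{V_2}\leq\mu_0^{-1}\|J_{\mu_0,\delta}'(\boldsymbol{v_k})\|_{V_2^*}\to 0$.

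The argument is essentially bookkeeping once the preparatory lemmas are in place, so I do not expect a serious obstacle; the two points that deserve care are checking that the notion of \emph{admissible} step sizes used in Lemma~\ref{Admissible} is literally the implication stated as condition c) in Lemma~\ref{GlobalConvergence}, and observing that although $J_{\mu_0,\delta}$ is only once continuously Fr\'{e}chet differentiable (its second derivative being merely a G\^{a}teaux derivative), Lemma~\ref{GlobalConvergence} only requires $C^1$ regularity, so no two-times differentiability of $J_{\mu_0,\delta}$ is needed.
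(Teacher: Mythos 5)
Your proposal is correct and follows essentially the same route as the paper: verify the hypotheses of Lemma~\ref{GlobalConvergence} via the angle-condition lemma and Lemma~\ref{Admissible} to get $J_{\mu_0,\delta}'(\boldsymbol{v_k})\to 0$, then use the strong monotonicity supplied by the $\mu_0(\nabla\cdot,\nabla\cdot)$ term to pass to $\|\boldsymbol{v_k}-\boldsymbol{\overline{v}}\|_{V_2}\leq \mu_0^{-1}\|J_{\mu_0,\delta}'(\boldsymbol{v_k})\|_{V_2^*}\to 0$. Your explicit lower bound $J_{\mu_0,\delta}(\boldsymbol{v})\geq -\|\rho\boldsymbol{g}\|_{V_2^*}^2/(4\mu_0)$ simply spells out what the paper dismisses as trivial.
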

	\begin{proof}
		We already explained why the step sizes are admissible, see Lemma $\ref{Admissible}$. Moreover, the directions $\boldsymbol{w_k}$ are descent directions. Furthermore, we validated the angle condition. Trivially, the functional $J_{\mu_0,\delta}$ is bounded from below. Thus, all conditions for Lemma $\ref{GlobalConvergence}$ are fulfilled. Hence, we conclude $G(\boldsymbol{v_k})=J_{\mu_0,\delta}'(\boldsymbol{v_k})\to 0\in V_2^*$.
		
		Let $\boldsymbol{v^*}\in V_2$ be the solution of $G(\boldsymbol{v^*})=0$. For $\boldsymbol{v_k}=\boldsymbol{v^*}$ the claim is clear. For $\boldsymbol{v_k}\neq \boldsymbol{v^*}$, the strict monotonicity of $A$, see Lemma $\ref{StrictMonotone}$, implies
		\begin{align*}
		\mu_0 \|\boldsymbol{v_k}-\boldsymbol{v^*}\|_{V_2}^2&\leq
		\langle A(\boldsymbol{v_k})-A(\boldsymbol{v}^*),\boldsymbol{v_k}-\boldsymbol{v^*}\rangle_{V_2^*,V_2}
		\\
		&= \langle G(\boldsymbol{v_k})-G(\boldsymbol{v^*}),\boldsymbol{v_k}-\boldsymbol{v^*}\rangle_{V_2^*,V_2}
		\\
		&\leq \|G(\boldsymbol{v_k})-G(\boldsymbol{v^*})\|_{\mathcal{L}(V_2,V_2^*)}\|\boldsymbol{v_k}-\boldsymbol{v^*}\|_{V_2}
		\\
		\Leftrightarrow \mu_0\|\boldsymbol{v_k}-\boldsymbol{v^*}\|_{V_2}&\leq \|G(\boldsymbol{v_k})-G(\boldsymbol{v^*})\|_{\mathcal{L}(V_2,V_2^*)}.
		\end{align*}
		Due to $G(\boldsymbol{v_k})=J'_{\mu_0,\delta}(\boldsymbol{v_k})\to 0$ for $k\to \infty$ and $G(\boldsymbol{v^*})=0$ follows $\boldsymbol{v_k}\to \boldsymbol{v^*}\in V_2$.
	\end{proof}
	\subsection{Convergence to the non regularized problem}\label{ConvergenceRegularisation}
	In this subsection, we argue that the solution $\boldsymbol{v}=\boldsymbol{v_{\mu_0,\delta}}\in V_2$ converges to $\boldsymbol{v_{0,0}}\in V_2$. We only have to assume $\boldsymbol{v_{0,0}}\in V_2$. From \cite{Chen2013}, we only know $\boldsymbol{v_{0,0}}\in V_p$.
	\begin{theorem}[Convergence for smooth solutions]
		Let $|\Gamma_d|>0$ and $\boldsymbol{v_{\mu_0,\delta}}$ be the solution of $G_{\mu_0,\delta}(\boldsymbol{v})=0$ with the variables $\mu_0,\delta \in (0,\infty)$ or $(\mu_0,\delta)=(0,0)$ as in the definition of the operator $G$. Assume $\boldsymbol{v_{0,0}}\in V_2$. Then there exists $\tilde{c}\in \mathbb{R}$ with
		\begin{align*}
		\|\boldsymbol{v_{0,0}}-\boldsymbol{v_{\mu_0,\delta}}\|_{V_2}^2\leq \tilde{c}(|\Omega|\delta^p+|\Gamma_b|\delta^s+\mu_0).
		\end{align*}
	\end{theorem}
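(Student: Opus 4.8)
The plan is to compare the two problems through the convex functional of Definition~\ref{ConvexFunction}. Write $\boldsymbol{u}:=\boldsymbol{v_{0,0}}$ and $\boldsymbol{v}:=\boldsymbol{v_{\mu_0,\delta}}$; by hypothesis both lie in $V_2$, so by Lemma~\ref{AsMinimization} (and the accompanying strict convexity) $\boldsymbol{v}$ minimizes $J_{\mu_0,\delta}$ over $V_2$ while $\boldsymbol{u}$ minimizes $J_{0,0}$ over $V_p\supseteq V_2$. The regularity assumption $\boldsymbol{u}\in V_2$ enters precisely here: it makes $\boldsymbol{u}$ an admissible competitor for $J_{\mu_0,\delta}$ and $\boldsymbol{v}$ one for $J_{0,0}$, so I may combine the two minimality inequalities $J_{\mu_0,\delta}(\boldsymbol{v})\le J_{\mu_0,\delta}(\boldsymbol{u})$ and $J_{0,0}(\boldsymbol{u})\le J_{0,0}(\boldsymbol{v})$.

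Next I would split $J_{\mu_0,\delta}=J_{0,0}+R_\delta+\mu_0(\nabla\cdot,\nabla\cdot)$ with the regularization defect
\[
R_\delta(\boldsymbol{w})=\int_\Omega\frac{B}{p}\big[(|D\boldsymbol{w}|^2+\delta^2)^{p/2}-|D\boldsymbol{w}|^p\big]\,dx+\int_{\Gamma_b}\frac{\tau}{s}\big[(|\boldsymbol{w}|^2+\delta^2)^{s/2}-|\boldsymbol{w}|^s\big]\,ds\ge 0 .
\]
The decisive elementary fact is $(a^2+\delta^2)^{q/2}\le a^q+\delta^q$ for $a\ge0$, $q\in(0,1]$ (subadditivity of the concave map $t\mapsto t^{q/2}$, applied to $t=a^2$ and $t=\delta^2$); with $q=p$ and $q=s$ this gives $0\le R_\delta(\boldsymbol{w})\le \tfrac{\|B\|_{L^\infty(\Omega)}}{p}|\Omega|\delta^p+\tfrac{\|\tau\|_{L^\infty(\Gamma_b)}}{s}|\Gamma_b|\delta^s$ for every $\boldsymbol{w}$, which is where the $|\Omega|\delta^p$ and $|\Gamma_b|\delta^s$ come from. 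Adding the two minimality inequalities, the linear terms $(\rho\boldsymbol{g},\cdot)$ cancel, and using $J_{0,0}(\boldsymbol{u})\le J_{0,0}(\boldsymbol{v})$, $R_\delta(\boldsymbol{v})\ge0$ and the bound on $R_\delta(\boldsymbol{u})$ one is left with $J_{\mu_0,\delta}(\boldsymbol{u})-J_{\mu_0,\delta}(\boldsymbol{v})\le R_\delta(\boldsymbol{u})+\mu_0\|\boldsymbol{u}\|_{V_2}^2$, the last term accounting for the $+\mu_0$. To turn the functional gap into a $V_2$-distance I would use that $\boldsymbol{w}\mapsto\mu_0(\nabla\boldsymbol{w},\nabla\boldsymbol{w})$ is strongly convex on $V_2$ with modulus $2\mu_0$ while the remainder of $J_{\mu_0,\delta}$ is convex; since $J_{\mu_0,\delta}'(\boldsymbol{v})=0$ this gives $\mu_0\|\boldsymbol{u}-\boldsymbol{v}\|_{V_2}^2\le J_{\mu_0,\delta}(\boldsymbol{u})-J_{\mu_0,\delta}(\boldsymbol{v})$, and combining the two displays closes the estimate with $\tilde c$ built from $\|B\|_{L^\infty}$, $\|\tau\|_{L^\infty}$, $p$, $s$ and $\|\boldsymbol{v_{0,0}}\|_{V_2}$. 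An equivalent hands-on variant tests $G_{\mu_0,\delta}(\boldsymbol{v})-G_{0,0}(\boldsymbol{u})=0$ against $\boldsymbol{u}-\boldsymbol{v}\in V_2\hookrightarrow V_p$, extracts $\mu_0\|\boldsymbol{u}-\boldsymbol{v}\|_{V_2}^2$ from the strict-monotonicity inequality of Lemma~\ref{StrictMonotone} (resting on Lemmas~\ref{PDeltaConditions} and~\ref{LipschitzEstimate}), and estimates the residual $\langle(G_{0,0}-G_{\mu_0,\delta})\boldsymbol{u},\boldsymbol{u}-\boldsymbol{v}\rangle$ by Young's inequality after absorbing $\tfrac{\mu_0}{4}\|\boldsymbol{u}-\boldsymbol{v}\|_{V_2}^2$.

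The step I expect to be the main obstacle is the $V_2$-coercivity bookkeeping. The shear-thinning parts of the operator are degenerate — the weight $(\delta+|D\boldsymbol{u}|+|D\boldsymbol{v}|)^{p-2}$ in Lemma~\ref{StrictMonotone} need not be bounded below, and solutions are controlled only in $W^{1,p}\cap V_2$, not in $W^{1,\infty}$ — so the full $V_2$-norm of $\boldsymbol{v_{0,0}}-\boldsymbol{v_{\mu_0,\delta}}$ is controlled exclusively through the added Dirichlet term $\mu_0(\nabla\cdot,\nabla\cdot)$, which is why $\mu_0>0$ and the assumption $\boldsymbol{v_{0,0}}\in V_2$ are both indispensable. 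One therefore has to keep every estimate closed in the $V_2$-norm itself rather than in a weaker, possibly degenerate quantity, and in particular to measure the regularization perturbation at the level of the functional $J_{\mu_0,\delta}$ via $R_\delta$ rather than through the cruder pointwise bound $|S^q_\delta(P)-S^q_0(P)|\le c_q\delta^{q-1}$, which would only yield a weaker power of $\delta$; using $R_\delta$ is what gives the sharp exponents $\delta^p,\delta^s$. The remaining ingredients — the Frobenius-norm versions of the elementary inequalities, the cancellation of $(\rho\boldsymbol{g},\cdot)$, and the strong-convexity statement for $J_{\mu_0,\delta}$ — are routine.
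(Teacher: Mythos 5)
Your route is the paper's route: compare the two solutions through the convex functional, bound the regularization defect by the elementary inequality $(a^2+\delta^2)^{q/2}\le a^q+\delta^q$ (note you need it for $q=p,s\in(1,2)$, so the relevant range is $q\in(0,2]$, not $(0,1]$; the concavity argument still covers this), use minimality of $\boldsymbol{v_{0,0}}$ for $J_{0,0}$, and convert the functional gap into a $V_2$-distance using $J_{\mu_0,\delta}'(\boldsymbol{v_{\mu_0,\delta}})=0$. Your Bregman/strong-convexity phrasing of that last conversion is the same computation the paper performs with the fundamental theorem of calculus plus the monotonicity of $S^p$ and $S^s$.

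The genuine gap is the final combination. With the modulus you (correctly) assign to the quadratic term, your two displays give
\begin{align*}
\mu_0\|\boldsymbol{u}-\boldsymbol{v}\|_{V_2}^2\;\le\; \tfrac{\|B\|_{L^\infty(\Omega)}}{p}|\Omega|\delta^p+\tfrac{\|\tau\|_{L^\infty(\Gamma_b)}}{s}|\Gamma_b|\delta^s+\mu_0\|\boldsymbol{u}\|_{V_2}^2,
\end{align*}
and this does not ``close the estimate'': dividing by $\mu_0$ leaves $(|\Omega|\delta^p+|\Gamma_b|\delta^s)/\mu_0$ plus the non-vanishing constant $\|\boldsymbol{u}\|_{V_2}^2$, which is not of the form $\tilde c\,(|\Omega|\delta^p+|\Gamma_b|\delta^s+\mu_0)$ with $\tilde c$ independent of $\mu_0,\delta$. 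To reach the stated bound one needs the functional gap to dominate $\|\boldsymbol{u}-\boldsymbol{v}\|_{V_2}^2$ with a $\mu_0$-independent constant, which is precisely what the degenerate shear-thinning part cannot supply, as you yourself observe. The paper gets $\tfrac12\|\boldsymbol{\tilde v}\|_{V_2}^2\le J_{\mu_0,\delta}(\boldsymbol{v_{0,0}})-J_{\mu_0,\delta}(\boldsymbol{v_{\mu_0,\delta}})$ by writing the three monotone terms without their weights $B$, $\tau$, $\mu_0$ when identifying their sum with the difference of derivatives, i.e.\ it uses a $\mu_0$-independent convexity modulus exactly at the point where your careful bookkeeping shows only a modulus of order $\mu_0$ is available; so either you reproduce that step (and then you must justify it, which your own computation shows is the real issue), or the estimate you actually obtain is strictly weaker than the theorem. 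The ``hands-on'' variant you sketch (testing with $\boldsymbol{u}-\boldsymbol{v}$ and absorbing via Young) loses the same factor of $\mu_0$, only more severely, so it does not repair this.
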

	\begin{proof}
		We follow the idea in \cite[Theorem 4.1]{Hirn2013}. We set $\boldsymbol{\tilde{v}}:=\boldsymbol{v_{0,0}}-\boldsymbol{v_{\mu_0,\delta}}$ to shorten the notation. We use in the upcoming calculation the monotonicity of $S^p$ and $S^s$, $J_{\mu_0,\delta}'(\boldsymbol{v_{\mu_0,\delta}})=0$, and the main theorem of calculus:
		\begin{align*}
		&\quad \frac{1}{2}\|\boldsymbol{\tilde{v}}\|_{V_2}^2
		\\
		&= 
		\int_0^1\int_{\Omega}\frac{1}{r}|r\nabla \boldsymbol{\tilde{v}}|^2\, dx\, dr
		\\
		&\leq 
		\int_0^1\int_{\Omega}\frac{1}{r}\big(S^p(D\boldsymbol{v_{\mu_0,\delta}}+rD\boldsymbol{\tilde{v}})-S^p(D\boldsymbol{v_{\mu_0,\delta}})\big)
		:(D\boldsymbol{v_{\mu_0,\delta}}+rD\boldsymbol{\tilde{v}}-D\boldsymbol{v_{\mu_0,\delta}})\, dx\, dr
		\\
		&\quad +\int_0^1\int_{\Gamma_b}\frac{1}{r}\big(S^s(\boldsymbol{v_{\mu_0,\delta}}+r\boldsymbol{\tilde{v}})-S^s(\boldsymbol{v_{\mu_0,\delta}})\big)\cdot(\boldsymbol{v_{\mu_0,\delta}}+r\boldsymbol{\tilde{v}}-\boldsymbol{v_{\mu_0,\delta}})\, ds\, dr
		\\
		&\quad +\int_0^1\int_{\Omega}\frac{1}{r}|r\nabla \boldsymbol{\tilde{v}}|^2\, dx\, dr
		\\
		&=\int_0^1\frac{1}{r}\big(J'_{\mu_0,\delta}(\boldsymbol{v_{\mu_0,\delta}}+r\boldsymbol{\tilde{v}})-J_{\mu_0,\delta}'(\boldsymbol{v_{\mu_0,\delta}})\big)(\boldsymbol{v_{\mu_0,\delta}}+r\boldsymbol{\tilde{v}}-\boldsymbol{v_{\mu_0,\delta}})\, dr
		\\
		&=\int_0^1J'_{\mu_0,\delta}(\boldsymbol{v_{\mu_0,\delta}}+r\boldsymbol{\tilde{v}})
		\boldsymbol{\tilde{v}}\, dr=J_{\mu_0,\delta}(\boldsymbol{v_{0,0}})-J_{\mu_0,\delta}(\boldsymbol{v_{\mu_0,\delta}}).
		\end{align*}
		In the last step, we inserted $\boldsymbol{\tilde{v}}$. We use
		\begin{align*}
		(|D\boldsymbol{v}|^2+\delta^2)^{r/2}=\|(D\boldsymbol{v},\delta)\|_2^r\leq \|(D\boldsymbol{v},\delta)\|_r^r=|D\boldsymbol{v}|^r+\delta^r\quad \text{ for }r\in \{s,p\}
		\end{align*}
		to bound the functional $J_{\mu_0,\delta}$ by $J_{\mu_0,0}$ for all $\boldsymbol{v}\in V_2$ with   \begin{align*}
		J_{\mu_0,\delta}(\boldsymbol{v})&=\int_{\Omega}\frac{1}{p}(|D\boldsymbol{v}|^2+\delta^2)^{p/2}\, dx+\int_{\Gamma_b}\frac{1}{s}(|\boldsymbol{v}|^2+\delta^2)^{s/2}\, ds+\mu_0 \|\boldsymbol{v}\|_{V_2}^2
		\\
		&\leq \frac{1}{p}\int_{\Omega}|D\boldsymbol{v}|^p+\delta^p\, dx
		+\frac{1}{s}\int_{\Gamma_b}|\boldsymbol{v}|^s+\delta^s\, ds+\mu_0 \|\boldsymbol{v}\|_{V_2}^2
		\\
		&= J_{0,0}(\boldsymbol{v})+|\Omega|\delta^p+|\Gamma_b|\delta^s+\mu_0\|\boldsymbol{v}\|_{V_2}^2.
		\end{align*}
		Hence, we obtain with the minimizer $\boldsymbol{v_{0,0}}$ of $J_{0,0}$ and the definitions of $J_{0,0}$ and $J_{\mu_0,\delta}$
		\begin{align*}
		&\quad J_{\mu_0,\delta}(\boldsymbol{v_{0,0}})-J_{\mu_0,\delta}(\boldsymbol{v_{\mu_0,\delta}})
		\\
		&\leq 
		J_{0,0}(\boldsymbol{v_{0,0}})+|\Omega|\delta^p+|\Gamma_b|\delta^s+\mu_0\|\boldsymbol{v_{0,0}}\|_{V_2}^2-J_{\mu_0,\delta}(\boldsymbol{v_{\mu_0,\delta}})
		\\
		&\leq J_{0,0}(\boldsymbol{v_{\mu_0,\delta}})+|\Omega|\delta^p+|\Gamma_b|\delta^s+\mu_0 \|\boldsymbol{v_{0,0}}\|_{V_2}^2-J_{\mu_0,\delta}(\boldsymbol{v_{\mu_0,\delta}})
		\\
		&=\frac{1}{p}\int_{\Omega}|D\boldsymbol{v_{\mu_0,\delta}}|^p-(\delta^2+|D\boldsymbol{v_{\mu_0,\delta}}|^2)^{p/2}\, dx+
		\frac{1}{s}\int_{\Gamma_b}|\boldsymbol{v_{\mu_0,\delta}}|^s-(\delta^2+|\boldsymbol{v_{\mu_0,\delta}}|^2)^{s/2}\, ds
		\\
		&\quad+|\Omega|\delta^p+|\Gamma_b|\delta^s+\mu_0\big(\|\boldsymbol{v_{0,0}}\|_{V_2}^2-\|\boldsymbol{v_{\mu_0,\delta}}\|_{V_2}^2\big)
		\\
		&\leq |\Omega|\delta^p+|\Gamma_b|\delta^s+\mu_0\|\boldsymbol{v_{0,0}}\|_{V_2}^2.
		\end{align*}
		Thereby, we obtain with $\tilde{c}\in \mathbb{R}$
		\begin{align*}
		\|\boldsymbol{v_{0,0}}-\boldsymbol{v_{\mu_0,\delta}}\|_{V_2}^2\leq \tilde{c}(|\Omega|\delta^p+|\Gamma_b|\delta^s+\mu_0).
		\end{align*}
	\end{proof}
	\section{Numerical experiment}\label{Experiments}
	In this section, we describe two numerical experiments: ISMIP-HOM $B$ formulated in \cite{Pattyn2008} and a sliding block. We test these experiments with Newton's method with Armijo step sizes and compare them with approximately exact step sizes that we describe in the next subsection. We also use the approximately exact step sizes to modify the Picard iteration. We implemented the experiments in \textit{FEniCS}, \cite{Logg2012}, version $2019.1.0$. \textit{FEniCS} is a tool that solves partial differential equations with finite elements. We use the Taylor-Hood element $P_2-P_1$ for the velocity $\boldsymbol{v}$ and the pressure $\pi$. In all experiments, we calculate the initial guess by replacing $\big(|D\boldsymbol{v}|^2+\delta^2\big)^{(p-2)/2}$ with $10^6$ and solving the resulting Stokes problem. 
	
	Let $\boldsymbol{v}\in V_2$. Then $G(\boldsymbol{v})\in V_2^*$ with the norm
	\begin{align*}
	\|G(\boldsymbol{v})\|_{V_2^*}=\sup_{\boldsymbol{\phi}\in V_2, \|\boldsymbol{\phi}\|_{V_2}=1}\langle G(\boldsymbol{v}),\boldsymbol{\phi}\rangle_{V_2^*,V_2}.
	\end{align*}
	To evaluate this norm, we use the Riesz isomorphism:
	\begin{definition}[Norm of the Riesz isomorphism]\label{ResidualNormDef}
		Let $|\Gamma_d|>0$, $\mu_0>0$, $\delta>0$, and $\boldsymbol{v}\in V_2$. Let $\boldsymbol{\tilde{v}}\in V_2$ be the solution of
		\begin{align*}
		\int_{\Omega}\nabla \boldsymbol{\tilde{v}}:\nabla \boldsymbol{\phi}\, dx = \langle G(\boldsymbol{v}),\boldsymbol{\phi}\rangle_{V_2^*,V_2}\quad \text{for all }\boldsymbol{\phi}\in V_2.
		\end{align*}
		Then the norm of the Riesz isomorphism is $ries\in [0,\infty)$ is
		\begin{align*}
		ries^2:=\int_{\Omega}|\nabla \boldsymbol{\tilde{v}}|^2\, dx +\int_{\Gamma_b}|\boldsymbol{\tilde{v}}|^2\, ds
		\end{align*}
		with $\boldsymbol{\tilde{v}}\cong G(\boldsymbol{v})\in V_2^*$.
	\end{definition}
	If the norm of the Riesz isomorphism is small for $\boldsymbol{v}\in V_2$, it follows $G(\boldsymbol{v})\approx 0$.
	\subsection{Numerical solvers and computational effort}\label{SolversAndEffort}
	The Picard iteration was suggested for modeling glaciers in \cite{Colinge1999} and is used in ice models like \textit{ISSM}, see \cite{Larour2012}. The Picard iteration is: For given $\boldsymbol{v_k}\in V_2$ find $\boldsymbol{v_{k+1}}\in V_2$ with
	\begin{align*}
	(B(|D\boldsymbol{v_k}|^2+\delta^2)^{(p-2)/2}D\boldsymbol{v_{k+1}},\nabla \boldsymbol{\phi})
	&+(\tau(|\boldsymbol{v_k}|^2+\delta^2)^{(s-2)/2}\boldsymbol{v_{k+1}},\boldsymbol{\phi})
	\\
	&+\mu_0(\nabla \boldsymbol{v_{k+1}},\nabla \boldsymbol{\phi})=(\rho \boldsymbol{g},\boldsymbol{\phi})
	\end{align*}
	for all $\boldsymbol{\phi} \in V_2$. This algorithm is said to be often globally convergent in practice but slow, \cite{Fraters2019}. It is, for example, used for the Navier-Stokes equations with proved convergence theory, \cite{Girault1986}. We will compare the following algorithms:
	\begin{itemize}
		\item The Picard iteration.
		\item The Picard iteration with approximations of exact step sizes, see Definition $\ref{ExactDefinition}$, with $\boldsymbol{w_k}:=\boldsymbol{v_{k+1}}-\boldsymbol{v_k}$.
		\item Newton's method with approximations of exact step sizes, see Definition $\ref{ExactDefinition}$.
		\item Newton's method with Armijo step sizes, see Definition $\ref{ArmijoDefinition}$.
	\end{itemize}
	We can calculate the exact step size arbitrarily precise because we have a convex functional. Let $0\leq a<b<\infty$. For the actual velocity $\boldsymbol{v_k}$ and the direction $\boldsymbol{w_k}$, we can calculate
	\begin{align*}
	\min_{t \in [a,b]}J_{help}(t):=\min_{t \in [a,b]}J_{\mu_0,\delta}(\boldsymbol{v_k}+t\boldsymbol{w_k}).
	\end{align*}
	We use a simple bisection and set $b:=(a+b)/2$ for $J_{help}'((a+b)/2)\geq 0$ and $a:=(a+b)/2$ for $J_{help}'((a+b)/2)<0$ until we obtain the wanted accuracy. Finally, we set $t:=(a+b)/2$. The chain rule yields for $t\in (a,b)$
	\begin{align}\label{DerivativeHelpFunc}
	J_{help}'(t)=\langle G(\boldsymbol{v_k}+t\boldsymbol{w_k}),\boldsymbol{w_k}\rangle_{V_2^*,V_2}.
	\end{align}
	Furthermore, we can do these calculations with mixed elements:
	\begin{remark}
		The step-size control is identical with mixed elements.	Only the calculation of the direction changes.
	\end{remark}
	\begin{proof}
		We calculate the initial guess $(\boldsymbol{v_0},\pi_0)\in(W_2,L^2_0(\Omega))$ by replacing $\big(|D\boldsymbol{v}|^2+\delta^2\big)^{(p-2)/2}$ with $10^6$ and solving the resulting Stokes problem. This initial guess is divergence-free. Then, we calculate a divergence-free direction $\boldsymbol{w_0}$ for the velocity and a direction $\psi_0$ for the pressure. Hence, we have for the convex functional
		\begin{align*}
		J_{\mu_0,\delta}'(\boldsymbol{v_0})\boldsymbol{w_0}=\langle  G(\boldsymbol{v_0}),\boldsymbol{w_0}\rangle_{V_2^*,V_2}
		=\langle  G(\boldsymbol{v_0}),\boldsymbol{w_0}\rangle_{V_2^*,V_2}-(\pi_0,\mathrm{div}\boldsymbol{w_0})-(\mathrm{div}\boldsymbol{v_0},\psi_0).
		\end{align*}
		Additionally, our iteratives $\boldsymbol{v_k}$ are divergence-free as a linear combination of the divergence-free $\boldsymbol{v_{k-1}}$ and $\boldsymbol{w_{k-1}}$.
		
		Thus, the step size control is identical with mixed elements.
	\end{proof}
	Now, we consider the computational effort. The computation consists of three parts: Assembling the matrices, solving the linear system of equations, and calculating the step size.
	
	In \cite{Isaac2015}, multigrid methods are used to solve the $p$-Stokes equations, even on real-world examples like the Antarctic ice sheet. In \cite{Brown2013}, the computational effort of multigrid method is discussed. They measured that evaluating the residual takes less computation time than, e.g., assembling the Jacobi matrix or necessary matrix-vector multiplications. For their results, they used a simplified version of the $p$-Stokes equations. As calculating the residual is not the computationally expensive part also calculating the step size control is computationally affordable compared to the other parts.
	\subsection{Computational domain}
	A unit square is too simple to represent real-world glaciers. Instead, in the ISMIP-HOM experiments, see \cite{Pattyn2008}, a sinusoidal bedrock, see Fig. \ref{domain}, is suggested, and Dirichlet boundary conditions are imposed at the bedrock. We set $\alpha:=0.5$ \textup{$^{\circ}$}, $L:=5000$ \textup{m}, and define the upper and lower boundary of the domain by $z_s:[0,L]\to \mathbb{R}$, $z_b:[0,L]\to \mathbb{R}$,
	\begin{align*}
	z_s(x)=-\tan(\alpha)x\text{, }\quad \text{and } z_b(x)=z_s(x)-1000+500\sin(\omega x)
	\end{align*}
	with $\omega=2\pi/L$. The experiment needs periodic boundary conditions on the left and the right side of the domain and $\sigma \cdot \boldsymbol{n}=0$ at the surface. 
	\subsection{Parameters}
	In this subsection, we discuss setting constants and forcing boundary conditions. We set the constants for the experiment corresponding to the ISMIP-HOM experiments, see \cite{Pattyn2008}: The ice parameter $B:=0.5\cdot (10^{-16})^{-1/3}$ \textup{Pa$^{-3}$a$^{-1}$}, the density $\rho:=910$ \textup{kg m$^{-3}$}, the gravitational acceleration $\mathbf{g}:=(0,-9.81)$ \textup{m s$^{-2}$}, and the seconds per year $31\, 556\, 926$. The nonlinear viscosity is $(0.5|D\boldsymbol{v}|^2+\delta^2)^{(p-2)/2}$ with $p:=4/3$. The factor $0.5$ within the nonlinear term is given by \cite{Pattyn2008}. However, we could modify $B$ and $\delta$ to formulate the equations as in our convergence analysis before. 
	
	Periodic boundary conditions have some difficulties in implementation for unstructured grids. Moreover, they are not necessary for real-world applications. Thus, we extend our domain by three copies to the left and the right. We apply Dirichlet boundary conditions at these boundaries. This approach is suggested in the Supplement of \cite{Pattyn2008}. 
	\begin{figure}[h]
		\centering
		\includegraphics[width=\textwidth]{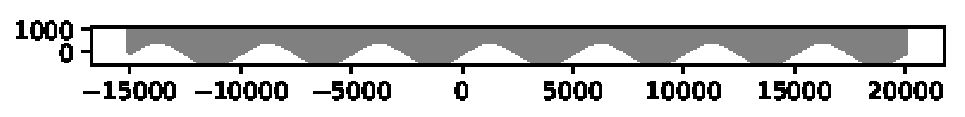}
		\caption{Considered domain in our experiment.}
		\label{domain}
	\end{figure}
	The resulting domain is shown in Fig. $\ref{domain}$. 
	
	We rotate the gravity to obtain a horizontal upper surface. Formally, we should rotate the left and right boundaries, too. However, this only changes the position in $x$-direction by a maximum of $1500 \textup{ m}\cdot \sin(0.5$\textup{$^{\circ})$}$\approx 13\textup{ m}$. 
	This is neglectable compared to the horizontal extent of $35\, 000 \textup{ m}$. Thus, we simplify the domain with vertical left and right boundaries and a horizontal surface boundary. Moreover, this approach is more flexible because we can simply change the angle $\alpha$ without changing the domain to generate different experiments. Furthermore, changes between using periodic boundary conditions and the variant with copies of the glacier are easier. 
	
	The value $\delta^2$ should be small compared to typical values of $0.5|D\boldsymbol{v}|^2$. The range of $|D\boldsymbol{v}|$ is typically between $10^{-7}$ \textup{/s} and $10^{-11}$ \textup{/s}. The calculations are done in years. Hence, we obtain as a necessary condition
	\begin{align*}
	\delta^2 \leq 0.5(3.16\cdot 10^{-4}\textup{/a})^2\cdot \mathrm{eps}
	\end{align*}
	with the machine precision $\mathrm{eps}:= 10^{-16}$. By rounding down to a factor of $10$, we obtain $\delta:=10^{-12}\textup{/a}$. We should choose $\mu_0$ such that 
	\begin{align*}
	\mu_0 \leq(0.5|D\boldsymbol{v}|^2+\delta^2)^{(p-2)/2}\cdot \mathrm{eps}
	\end{align*}
	is fulfilled. We obtain an upper bound by inserting the maximum typical value of $|D\boldsymbol{v}|$ for $|D\boldsymbol{v}|$ and $\delta$. We conclude
	\begin{align*}
	\mu_0 \leq (2\cdot (10^{-7}\cdot 31\, 556\, 926 \textup{/a})^2)^{(4/3-2)/2}\cdot 10^{-16}\approx 3.69\cdot 10^{-17}\textup{a}^{2/3}.
	\end{align*}
	Consequently, we choose $\mu_0:=10^{-17}$ $\textup{kg a/}(\textup{m}\textup{s}^2)$. The unit for $\mu_0$ follows from $[\mu_0 |\nabla \boldsymbol{v}|^2]=[-\rho \boldsymbol{g}\cdot \boldsymbol{v}]$.
	\subsection{Results and interpretation}
	The ISMIP-HOM $B$ experiment measures the norms of the surface velocity $v_r$:
	\begin{align*}
	v_r:=\sqrt{(v_1\cdot \cos(\alpha))^2-(v_2\cdot \sin(\alpha))^2}.
	\end{align*}
	On the Fig. $\ref{VelField}$ are the surface velocities for all used algorithms restricted to the original glacier $x\in [0,5000]$. The surface velocities are approximately the same for all our methods. Moreover, the velocities are similar to \cite[Figure 6]{Pattyn2008}.
	\begin{figure}[h]
		\includegraphics[width=\textwidth]{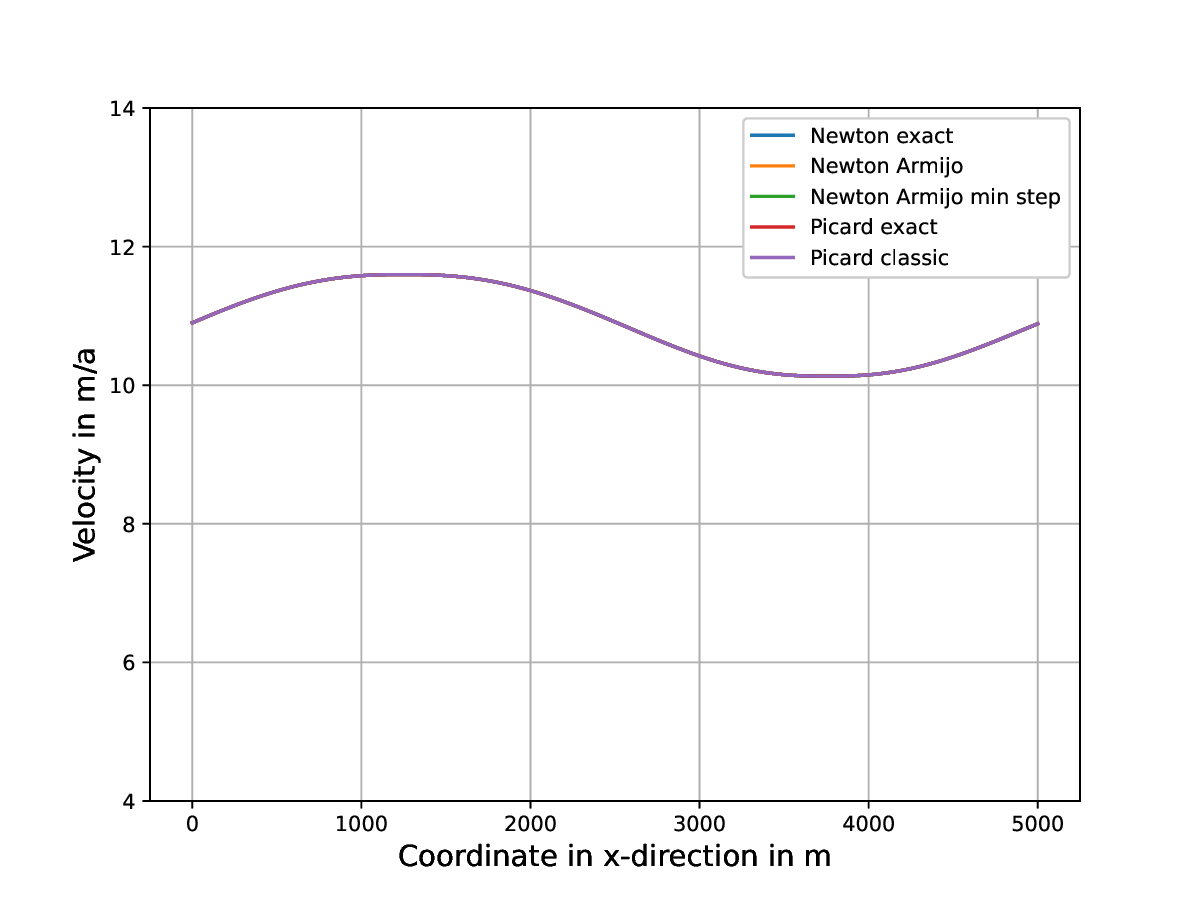}
		\caption{The velocity norms at the surface are shown in the left figure. All methods produce nearly the same surface velocity as the result.} 
		\label{VelField}
	\end{figure}
In Fig. $\ref{Residuals}$, we see the relative norm of the Riesz isomorphism, which compares the actual norm of the Riesz isomorphism of the iteration with the first calculated norm of the Riesz isomorphism, see Definition $\ref{ResidualNormDef}$. Newton's method with Armijo step sizes reduces the norm of the Riesz isomorphism compared to the initial norm of the Riesz isomorphism by approximately $5\cdot 10^3$. Newton's method with Armijo step sizes starts with a quadratic convergence. After a few iterations, this tends to a linear convergence. Then, the norm of the Riesz isomorphism is constant. The norm of the Riesz isomorphism of the Picard iteration decreases linearly but at a slower rate than Newton's method with Armijo step sizes. Newton's method and the Picard iteration with approximately exact step sizes converge similarly fast as Newton's method with Armijo step sizes.

In \cite{Hirn2013}, it was discussed that a higher value of $\delta$ leads to higher accuracy and a lower number of necessary iterations. We reproduce this result in Fig. $\ref{ResidualsDifferentDelta}$. Additionally, we see that the velocity field at the surface is nearly identical for $\delta:=10^{-4}$. With $\delta:=10^{-4}$, Newton's method is much better than the Picard iteration for both step size controls, see Fig. $\ref{DifferentDelta}$.
\begin{figure}[h]
	\includegraphics[scale=0.258]{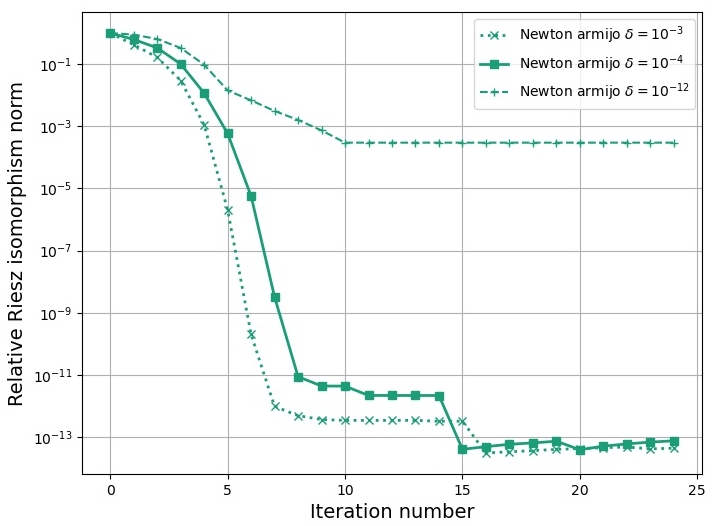}
	\includegraphics[scale=0.258]{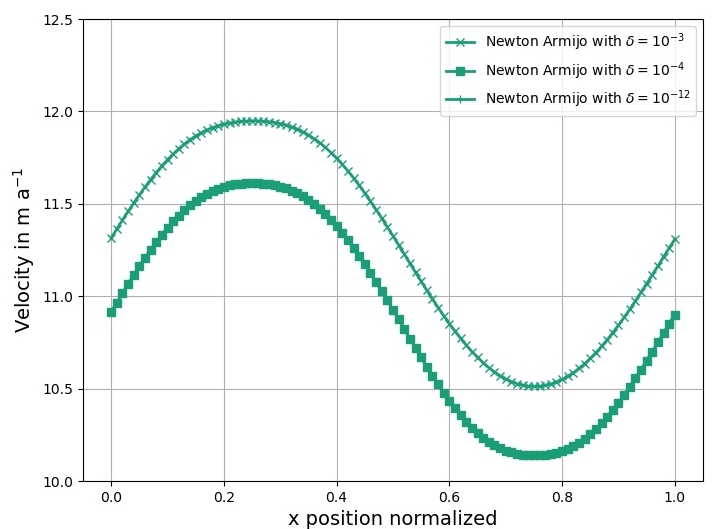}
	\caption{Left: The relative norms of the Riesz isomorphism are visualized for different values of $\delta$. Right: The velocity norms at the surface are shown in the right figure.}
	\label{ResidualsDifferentDelta}
\end{figure}
\begin{figure}[h]
	\includegraphics[width=\textwidth]{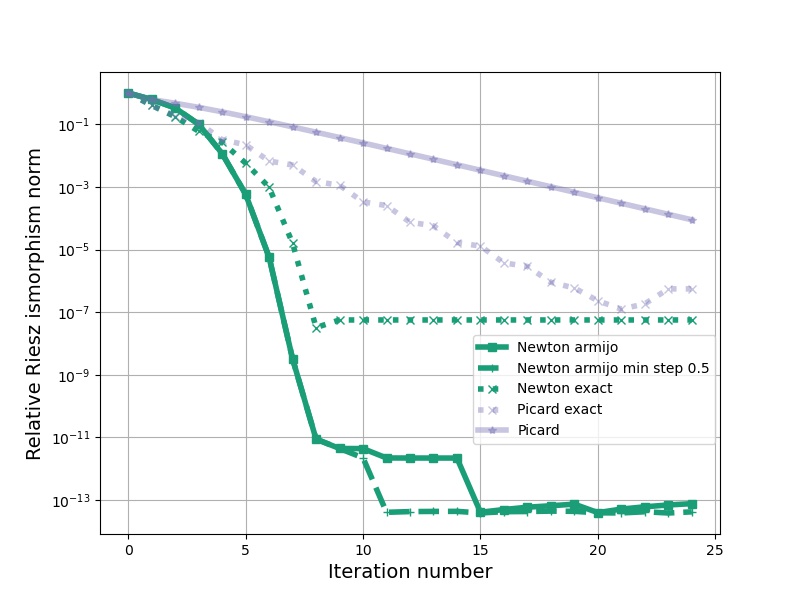}
	\caption{The relative norms of the Riesz isomorphism are visualized for $\delta:=10^{-4}$.}
	\label{DifferentDelta}
\end{figure}
To verify our claim regarding the computation time, see subsection $\ref{SolversAndEffort}$, we calculated the mean and the standard deviation for $100$ iterations for each method, see Table $\ref{ComputationTimes}$. In our experiment, the computation time with a step size control is within the standard deviation of the computation time for the Picard iteration. Moreover, the step size control needs only $1$ \% of the whole computation time for each iteration. Another important result is that the Picard iteration and Newton's method need comparable computation times.

\begin{table}
	\begin{tabular}{|c|c|c|c|c|}
		\hline
		& \multicolumn{2}{c|}{Computation time each iteration} & \multicolumn{2}{c|}{Computation time step size} \\
		\hline
		& Mean & Standard deviation & Mean & Standard deviation \\
		\hline
		Picard & 98.39 & 2.39 & - & - \\
		\hline
		Picard with exact step sizes & 99.47 & 2.16 & 1.00 & 0.01 \\
		\hline
		Newton with Armijo step sizes & 98.89 & 2.40 & 0.41 & 0.12 \\
		\hline
		Newton with Armijo step sizes & 98.55 & 2.29 & 0.08 & 0.01 \\
		and minimum step size=0.5 & & & & \\
		\hline
		Newton with exact step sizes & 99.71 & 2.23 & 1.01 & 0.01 \\
		\hline
	\end{tabular}
	\caption{Computation time in seconds.}
	\label{ComputationTimes}
\end{table}

We fixed the number of integral evaluations for the Armijo step size to a maximum of $20$ integral evaluations for each iteration and for the exact step sizes to $25$. Newton's method with a minimum step size of $0.5$ needs one or two iterations. Thus, Table $\ref{ComputationTimes}$ implies that both step size controls need a similar computation time for each iteration. We did not try to reduce the number of iterations for the step size because they are not important compared to solving the linear system of equations.
\subsection{Numerical experiment with friction}
In this subsection, we describe an experiment with friction. The experiment has a similar structure to the experiment ISMIP-HOM $B$. We set $\Omega:=(0,5000)\times (0,1000)$, $\Gamma_d:=\{(0,y);\, y\in [0,1000]\}\cup \{(5000,y);\, y\in [0,1000]\}$, $\Gamma_a:=\{(x,1000);\, x\in (0,5000)\}$, and $\Gamma_b:=\{(x,0);\, x\in (0,5000)\}$. We set $B$, $\rho$, $g$, and $p$ as in the experiment ISMIP-HOM $B$. We set $s:=p$. Again, we rotate the gravity instead of the domain.

We test our experiment with $\tau \in \{10^3,10^7\}$ \textup{Pa a}$^{p-1}$\textup{m}$^{1-p}$. As prescribed by the problem, we have zero velocities at the left and right boundary.
\begin{figure}
	\includegraphics[scale=0.26]{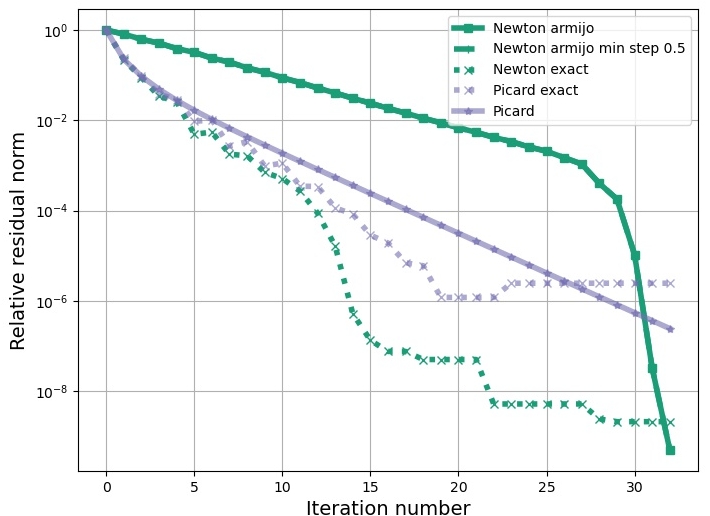}
	\includegraphics[scale=0.26]{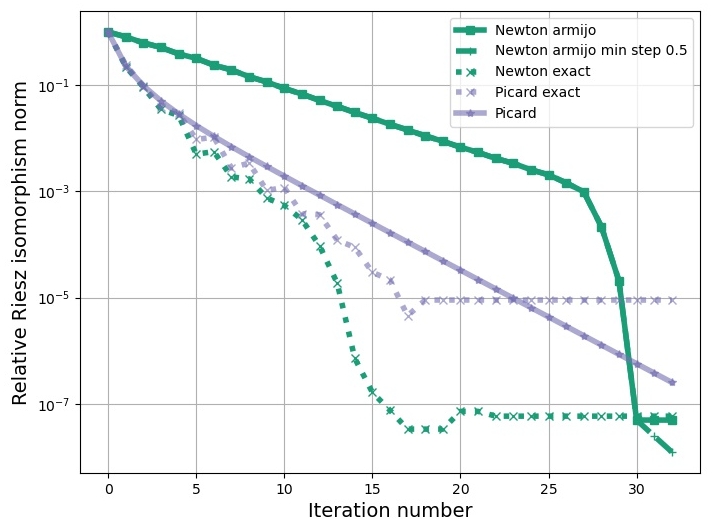}
	\caption{The friction coefficient is $\tau:=10^7$. Left: The initial guess is the same as in experiment ISMIP-HOM $B$. Right: The initial guess has the additional term $\int_{\Gamma_b}\tau \boldsymbol{v_0}\cdot \boldsymbol{\phi}\, ds$.}
	\label{ConvergenceFriction}
\end{figure}
In Fig. $\ref{ConvergenceFriction}$, we see the norms of the Riesz isomorphism for the high friction coefficient with two different initial guesses. On the left, we have the same initial guess as in the ISMIP-HOM experiments. On the right, we added the term $\int_{\Gamma_b} \tau \boldsymbol{v_0}\cdot \boldsymbol{\phi}\, ds$ to the variational formulation of the initial Stokes problem.

Interestingly, Newton's method with Armijo step sizes performs worse than all other algorithms for a high friction coefficient. It converges linearly and needs many iterations until it reaches quadratic convergence. The other initial guess does not resolve this issue. Newton's method with approximately exact step sizes reaches quadratic convergence much earlier. Also, the Picard iteration with approximately exact step sizes is good.
\begin{figure}
	\includegraphics[scale=0.26]{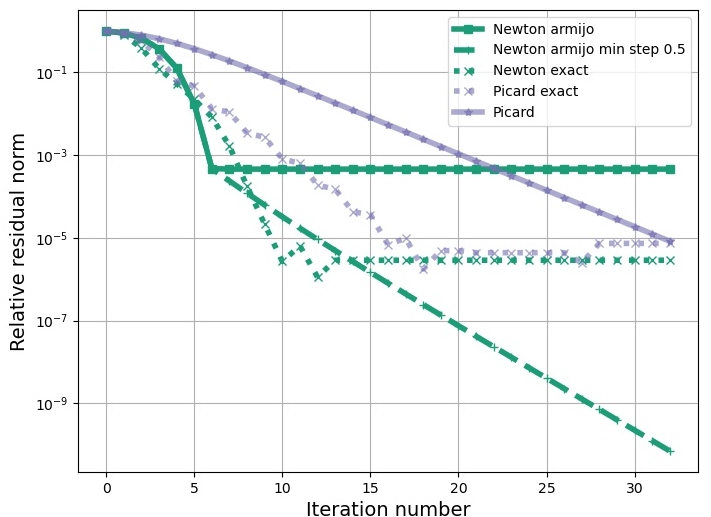}
	\includegraphics[scale=0.26]{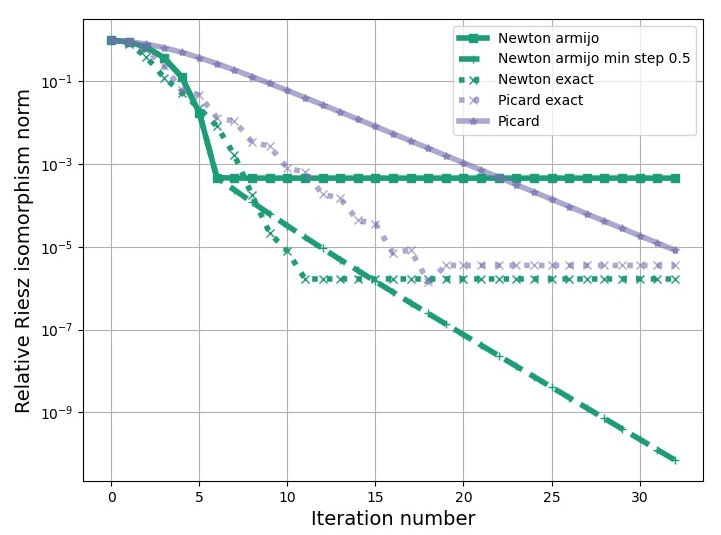}
	\caption{The friction coefficient is $\tau:=10^3$. Left: The initial guess is the same as in experiment ISMIP-HOM $B$. Right: The initial guess has the additional term $\int_{\Gamma_b}\tau \boldsymbol{v_0}\cdot \boldsymbol{\phi}\, ds$.}
	\label{ResidualNorms}
\end{figure}
For a low friction coefficient, all algorithms behave similarly regarding convergence to experiment ISMIP-HOM $B$, see Fig. $\ref{ResidualNorms}$, independent of the initial guess.
\section{Conclusion}\label{Conclusion}In conclusion, it is possible to obtain a global convergent Newton method with step size control with a convex functional. Moreover, the approximation of exact step sizes is also good. For a nonlinear sliding boundary, the convergence rate of Newton's method depends on the friction coefficient. Overall, the approximately exact step sizes seem slightly better because the convergence rate was in all experiments good. However, the convergence rate depends on the size of $\delta$. More experiments should be done in three dimensions, with physically motivated sliding coefficients, and with different values of $\delta$. Furthermore, an implementation in ice sheet models could be interesting.
\section*{Code availability}
An archived version is available at https://doi.org/10.5281/zenodo.10470996.
\section*{Acknowledgments}

I thank my supervisor Prof. Thomas Slawig from Kiel University, for many helpful discussions about mathematical and glaciological problems. Moreover, I thank Prof. Angelika Humbert and Dr. Thomas Kleiner from Alfred-Wegener-Institut in Bremerhaven, Dr. Martin R\"{u}ckamp from the Bavarian Academy of Sciences and Humanities, and Prof. Andreas Rademacher from the University of Bremen for helpful discussions. Finally, I thank the reviewers for their helpful feedback.

This version of the article has been
accepted for publication, after peer review but is not the Version of Record and does not
reflect post-acceptance improvements, or any corrections. The Version of Record is available online at: https://doi.org/10.1007/s11075-024-01941-6
	\bibliographystyle{fbs}

\begin{thebibliography}{10}

\bibitem{Fischler2022}
Fischler, Y., R\"uckamp, M., Bischof, C., Aizinger, V., Morlighem, M., and
  Humbert, A.: A scalability study of the Ice-sheet and Sea-level System Model
  (ISSM, version 4.18).
\newblock \emph{Geoscientific Model Development} 15, 3753--3771 (2022)

\bibitem{Chen2013}
Chen, Q., Gunzburger, M., and Perego, M.: {W}ell-{P}osedness {R}esults for a
  {N}onlinear {S}tokes {P}roblem {A}rising in {G}laciology.
\newblock \emph{{SIAM} Journal on Mathematical Analysis} 45, 2710--2733 (2013)

\bibitem{Jouvet2011}
Jouvet, G. and Rappaz, J.: Analysis and Finite Element Approximation of a
  Nonlinear Stationary Stokes Problem Arising in Glaciology.
\newblock \emph{Advances in Numerical Analysis} 2011, 1--24 (2011)

\bibitem{Hirn2013}
Hirn, A.: Finite element approximation of singular power-law systems.
\newblock \emph{Mathematics of Computation} 82, 1247--1268 (2013)

\bibitem{Diego2023}
de~Diego, G.~G., Farrell, P.~E., and Hewitt, I.~J.: On the Finite Element
  Approximation of a Semicoercive Stokes Variational Inequality Arising in
  Glaciology.
\newblock \emph{{SIAM} Journal on Numerical Analysis} 61, 1--25 (2023)

\bibitem{Fowler2021}
Fowler, A. and Ng, F., eds.: \emph{{G}laciers and {I}ce {S}heets in the
  {C}limate {S}ystem}.
\newblock Springer International Publishing, Cham (2021)

\bibitem{Casas1993}
Casas, E. and Fernandez, L.: {{D}istributed control of systems governed by a
  general class of quasilinear elliptic equations}.
\newblock \emph{J of {D}iff {E}quations} 104, 20--47 (1993)

\bibitem{Arada2012a}
Arada, N.: Distributed control for multistate modified Navier-Stokes equations.
\newblock \emph{{ESAIM}: Control, Optimisation and Calculus of Variations} 19,
  219--238 (2012)

\bibitem{Arada2012}
Arada, N.: Optimal Control of Shear-Thinning Fluids.
\newblock \emph{{SIAM} Journal on Control and Optimization} 50, 2515--2542
  (2012)

\bibitem{Fraters2019}
Fraters, M. R.~T., Bangerth, W., Thieulot, C., Glerum, A.~C., and Spakman, W.:
  {E}fficient and practical {N}ewton solvers for non-linear {S}tokes systems in
  geodynamic problems.
\newblock \emph{Geophysical Journal International} 218, 873--894 (2019)

\bibitem{Allen2017}
Allen, J., Leibs, C., Manteuffel, T., and Rajaram, H.: A Fluidity-Based
  First-Order System Least-Squares Method for Ice Sheets.
\newblock \emph{{SIAM} Journal on Scientific Computing} 39, B352--B374 (2017)

\bibitem{Bons2018}
Bons, P.~D., Kleiner, T., Llorens, M.-G., Prior, D.~J., Sachau, T., Weikusat,
  I., and Jansen, D.: {G}reenland {I}ce {S}heet: {H}igher {N}onlinearity of
  {I}ce {F}low {S}ignificantly {R}educes {E}stimated {B}asal {M}otion.
\newblock \emph{Geophysical Research Letters} 45, 6542--6548 (2018)

\bibitem{MacGregor2016}
MacGregor, J.~A., Fahnestock, M.~A., Catania, G.~A., Aschwanden, A., Clow,
  G.~D., Colgan, W.~T., Gogineni, S.~P., Morlighem, M., Nowicki, S. M.~J.,
  Paden, J.~D., Price, S.~F., and Seroussi, H.: A synthesis of the basal
  thermal state of the Greenland Ice Sheet.
\newblock \emph{Journal of Geophysical Research: Earth Surface} 121, 1328--1350
  (2016)

\bibitem{Chipot2002}
Chipot, M.: \emph{$\ell$ Goes to Plus Infinity}.
\newblock Birkhäuser Basel, Basel (2002)

\bibitem{Amrouche1994}
Amrouche, C.~S. and Girault, V.: Decomposition of vector spaces and application
  to the Stokes problem in arbitrary dimension.
\newblock \emph{Czechoslovak Mathematical Journal} 44, 109--140 (1994)

\bibitem{Girault1986}
Girault, V. and Raviart, P.-A.: \emph{{Finite Element Methods for
  {N}avier-{S}tokes Equations}}.
\newblock Springer Series in Computational Mathematics 5, New York (1986)

\bibitem{Berselli2017}
Berselli, L.~C. and R{\r{u}}{\v{z}}i{\v{c}}ka, M.: Global regularity properties
  of steady shear thinning flows.
\newblock \emph{Journal of Mathematical Analysis and Applications} 450,
  839--871 (2017)

\bibitem{Browder1963}
Browder, F.~E.: {Nonlinear elliptic boundary value problems}.
\newblock \emph{Bulletin of the American Mathematical Society} 69, 862 -- 874
  (1963)

\bibitem{Diening2007}
Diening, L., Ebmeyer, C., and Růžička, M.: Optimal Convergence for the
  Implicit Space-Time Discretization of Parabolic Systems with p-Structure.
\newblock \emph{SIAM Journal on Numerical Analysis} 45, 457--472 (2007)

\bibitem{Hinze2009}
Hinze, M., Pinnau, R., Ulbrich, M., and Ulbrich, S.: \emph{{O}ptimization with
  {PDE} {C}onstraints}.
\newblock Springer Netherlands, Dordrecht (2009)

\bibitem{Polyak2007}
Polyak, R.~A.: Regularized Newton method for unconstrained convex optimization.
\newblock \emph{Mathematical Programming} 120, 125--145 (2007)

\bibitem{Pattyn2008}
Pattyn, F., Perichon, L., Aschwanden, A., Breuer, B., de~Smedt, B.,
  Gagliardini, O., Gudmundsson, G.~H., Hindmarsh, R. C.~A., Hubbard, A.,
  Johnson, J.~V., Kleiner, T., Konovalov, Y., Martin, C., Payne, A.~J.,
  Pollard, D., Price, S., R\"uckamp, M., Saito, F., Sou\v{c}ek, O., Sugiyama,
  S., and Zwinger, T.: {B}enchmark experiments for higher-order and
  full-{S}tokes ice sheet models ({ISMIP}–{HOM}).
\newblock \emph{The Cryosphere} 2, 95--108 (2008)

\bibitem{Logg2012}
Logg, A., Mardal, K.-A., and Wells, G., eds.: \emph{Automated Solution of
  Differential Equations by the Finite Element Method}.
\newblock Springer Berlin Heidelberg, Berlin (2012)

\bibitem{Colinge1999}
Colinge, J. and Rappaz, J.: A strongly nonlinear problem arising in glaciology.
\newblock \emph{{ESAIM}: Mathematical Modelling and Numerical Analysis} 33,
  395--406 (1999)

\bibitem{Larour2012}
Larour, E., Seroussi, H., Morlighem, M., and Rignot, E.: Continental scale,
  high order, high spatial resolution, ice sheet modeling using the Ice Sheet
  System Model ({ISSM}).
\newblock \emph{Journal of Geophysical Research: Earth Surface} 117, n/a--n/a
  (2012)

\bibitem{Isaac2015}
Isaac, T., Stadler, G., and Ghattas, O.: Solution of Nonlinear Stokes Equations
  Discretized By High-Order Finite Elements on Nonconforming and Anisotropic
  Meshes, with Application to Ice Sheet Dynamics.
\newblock \emph{SIAM Journal on Scientific Computing} 37, B804--B833 (2015)

\bibitem{Brown2013}
Brown, J., Smith, B., and Ahmadia, A.: Achieving Textbook Multigrid Efficiency
  for Hydrostatic Ice Sheet Flow.
\newblock \emph{SIAM Journal on Scientific Computing} 35, B359--B375 (2013)

\end{thebibliography}

\end{document}